\pdfoutput=1
\documentclass[11pt]{article}
\usepackage[margin=1in]{geometry}
\usepackage{times}
\usepackage[round]{natbib}
\usepackage{authblk}

\usepackage[utf8]{inputenc}
\usepackage[T1]{fontenc}
\usepackage{microtype}
\usepackage{amsmath,amssymb,amsfonts,amsthm,mathtools}
\usepackage{algorithm}
\usepackage{algpseudocode}
\usepackage{booktabs,array,graphicx,xcolor,hyperref,url}
\hypersetup{hidelinks,pdfauthor={First Author, Second Author, Third Author, Fourth Author, Fifth Author},pdftitle={Smooth or Separable? Sparse Block Acceleration for Entropy-Regularized Linear Programs}}

\newtheorem{theorem}{Theorem}[section]
\newtheorem{lemma}[theorem]{Lemma}
\newtheorem{corollary}[theorem]{Corollary}
\newtheorem{proposition}[theorem]{Proposition}
\newtheorem{assumption}[theorem]{Assumption}

\newcommand{\prooflink}[1]{\unskip\hfill\hyperref[#1]{\textnormal{$\blacktriangleright$}}}
\newcommand{\R}{\mathbb R}
\newcommand{\E}{\mathbb E}
\newcommand{\Prob}{\mathbb P}
\newcommand{\ip}[2]{\langle #1,#2\rangle}
\newcommand{\eps}{\varepsilon}
\newcommand{\KL}{\operatorname{KL}}
\newcommand{\gKL}{\operatorname{gKL}}
\newcommand{\nnz}{\operatorname{nnz}}
\newcommand{\supp}{\operatorname{supp}}
\newcommand{\tr}{\operatorname{tr}}
\newcommand{\Diag}{\operatorname{Diag}}
\newcommand{\Cov}{\operatorname{Cov}}
\newcommand{\Var}{\operatorname{Var}}
\newcommand{\se}{{\rm se}}
\newcommand{\lse}{{\rm lse}}
\newcommand{\nB}{N_{\mathcal B}}  

\title{Smooth or Separable?\\Sparse Block Acceleration for\\Entropy-Regularized Linear Programs}
\author[1]{Irina Podlipnova}
  \author[1]{Maxim Mashtaler}
  \author[2]{Artem Agafonov}
  \author[3]{Yuriy Dorn}
  \author[4]{Alexander Gasnikov}                                                                                    
  \affil[1]{Moscow Independent Research Institute of Artificial Intelligence}        
  \affil[2]{Mohamed bin Zayed University of Artificial Intelligence}
  \affil[3]{Lomonosov Moscow State University}
  \affil[4]{Innopolis University}
  \affil[ ]{\texttt{podlipnova.i@miriai.org}, \texttt{mashtaler.maxim@gmail.com}, \texttt{artem.agafonov@mbzuai.ac.ae}, \texttt{dornyv@my.msu.ru}, \texttt{gasnikov@yandex.ru}}
\date{}

\begin{document}
\maketitle

\begin{abstract}
We study entropy-regularized linear programs with sparse affine constraints and compare two exact
dual representations induced by whether a redundant normalization constraint is retained or
eliminated. Eliminating it yields a partially separable sum-exp dual with sparse affine factors but
unbounded curvature. Our main result is an accelerated randomized block method whose acceleration
preserves sparse data-access cost. The key ingredient is a global entropy-specific Hessian--gap
bound that remains valid for signed constraint matrices, admits computable block-overlap
refinements, and provides deterministic curvature certificates before sampling. This geometry
supports square-root importance sampling and an exact lazy implementation in which sparse
arithmetic cost is weighted by local curvature rather than by a worst-block factor. We further
introduce a gap-dependent curvature dimension that connects the generalized-smooth regime to the
classical effective-rank intuition for coordinate methods. Retaining the normalization constraint
yields a log-sum-exp dual with globally bounded higher derivatives, enabling gradient-regularized
and cubic-regularized Newton methods. Experiments compare the resulting methods in terms of
coordinate epochs, arithmetic operations, and wall-clock time, separating the effects of
acceleration, touched-data sparsity, and problem-specific structure.
\end{abstract}

\section{Introduction}
Entropy-regularized linear programs (ELPs) with sparse constraints,
\begin{equation}\label{eq:primal-short}
\min_{x\ge0}\left\{\ip{c}{x}+\gamma\sum_jx_j(\log x_j-1):Ax=b,\ \mathbf1^\top x=1\right\},
\qquad A\in\R^{m\times n},\ n\gg m,
\end{equation}
carry a redundant simplex constraint: the affine system already fixes the total mass, in the sense
that some $q\in\R^m$ has $A^\top q=\mathbf1$ and $\ip{q}{b}=1$. Discrete optimal transport is the
standard example -- summing the row marginals makes one of the $m$ equations redundant -- and
entropic regularization is what made large instances tractable, by turning them into a matrix
scaling that alternating projection solves \citep{sinkhorn1967concerning,cuturi2013sinkhorn,altschuler2017near}.
The matrix is sparse: for transport it is an incidence matrix with two nonzeros per column.

\paragraph{One primal problem, two duals.}
The redundancy gives \emph{two} exact duals, on opposite sides of a trade. Keeping the simplex
equation and dualizing gives the \emph{log-sum-exp} objective; removing the redundant equation first
gives the \emph{sum-exp} objective:
\begin{align}
\phi_\lse(\lambda)&=-\ip{b}{\lambda}+\gamma\log\sum_{j=1}^n\exp\bigl(\tfrac{a_j^\top\lambda-c_j}{\gamma}\bigr),\label{eq:lse-short}\\
\phi_\se(\lambda)&=-\ip{b}{\lambda}+\gamma\sum_{j=1}^n\exp\bigl(\tfrac{a_j^\top\lambda-c_j}{\gamma}\bigr).\label{eq:se-short}
\end{align}
The logarithm normalizes the primal response onto the simplex, which bounds every derivative of
$\phi_\lse$ -- Nesterov smoothing \citep{nesterov2005smooth} -- but couples all $n$ factors through
their sum, so an accelerated update is not local. Without it $\phi_\se$ is a sum of exponentials of sparse affine
functions: factor $j$ depends only on the support of column $j$, so a coordinate update reads only
incident data. What $\phi_\se$ loses is a global curvature bound; its Hessian grows without limit and
it is smooth only in the generalized sense, with curvature controlled by the optimality gap.

\paragraph{Why one wants the separable side, and what blocks it.}
The reason is spectral: when curvature is spread unevenly across coordinates, square-root importance
sampling improves the complexity bound of a full-gradient method by up to $\sqrt{m/d_H}$, with $d_H=\tr H/\|H\|_2$ the effective rank of the
Hessian \citep{allenzhu2016,nesterovstich2017,gasnikov2015accelerated,tropp2015}. That is $\sqrt m$ only when one
eigenvalue dominates, and it is not: on the family we test the factor sits at $4.2$ throughout
$m\in[100,1000]$, and on transport between $1.6$ and $3.4$ (Section~\ref{sec:geometry}). The gain is thus moderate and concerns data access.

Each route then gives up one of the two properties that matter. On $\phi_\lse$, gradient and
second-order steps read all $n$ factors or cost $O(m^3)$, and an accelerated coordinate step must
re-evaluate the normalizing sum at the extrapolated point, so sparsity is lost. On $\phi_\se$ sparsity survives, and
acceleration is available in the abstract gap-dependent class: the coordinate method of
\citet{lobanov2024} is not accelerated, and \citet{lobanov2026}, concurrent with this work,
accelerates coordinate methods under an $(H_0,H_1)$ condition at the cost of an exact one-dimensional
relaxation per update; neither accounts for the arithmetic of an update. The
obstruction that remains is concrete: accelerated coordinate methods fix their sampling law and step
sizes from curvature constants known \emph{before} a block is drawn
\citep{nesterov2012coordinate,leesidford2013,fercoq2015,allenzhu2016,nesterovstich2017,gasnikov2015accelerated}, and $\phi_\se$ has no finite
such constants, while re-estimating curvature at the iterate needs the gap, which touches all $n$
factors and destroys the sparse access one came for. Replacing a constant by a function value is the
generalized-smoothness move \citep{zhang2020clipping,crawshaw2022,li2023generalized,vankov2025,gorbunov2025methods};
what it lacks is the instantiation -- constants read off $A$, $b$ and $\gamma$, and an accounting
that charges the nonzeros an update touches (Appendix~\ref{app:resource-comparison}).

Second-order methods do not separate the two duals. Both are quasi-self-concordant --
$M_{\rm qsc}\le\Delta_A/\gamma$ for $\phi_\lse$ and $\rho/\gamma$ for $\phi_\se$
(Proposition~\ref{prop:se-qsc}) -- so gradient-regularized Newton (GRN) \citep{doikov2023qsc} runs on
either; only $\phi_\lse$ has the globally bounded third derivative on which cubic-regularized
Newton (CRN) \citep{nesterov2006cubic} and the accelerated order-$2$ tensor method
\citep{dvurechensky2024near}, with rate $\eps^{-1/3}$, rely. The choice in our title is therefore
between two kinds of \emph{step}, a global one costing an $m\times m$ solve or a coordinate one reading
only incident data, and since both run on $\phi_\se$, Section~\ref{sec:lse-short} compares them on one
dual. Transport already uses both
\citep{dvurechensky2018computational,guo2020fast,brauer2017sinkhornnewton,tang2024sns,tang2024ssns}.

\paragraph{Smooth or separable?}
Smoothness is the property to exploit when the dual dimension is small
enough for an $m\times m$ factorization and high accuracy is required: a second-order step costs
$O(m^3)$, but its iteration count grows only as $\log\frac1\eps$, and on our synthetic family this
keeps the wall-clock advantage up to $m$ between $400$ and $1000$. Separability is the property to
exploit when $m$ is larger and $A$ is sparse: a coordinate step then costs only the data it reads, and
beyond that crossover the coordinate methods are ahead (Table~\ref{tab:crossover}). Separability alone
does not suffice when the blocks carry structure of their own: on transport the smooth route beats
generic coordinate methods, and Sinkhorn, which minimizes exactly over a whole marginal block, is
faster than both (Section~\ref{sec:exp-short}).

The technical core is a curvature bound for $\phi_\se$ that holds everywhere and is known before
sampling. Writing $F:=\phi_\se-\phi_\se^\star$ for the dual gap and fixing a partition
$\mathcal B$ of the $m$ dual coordinates into blocks, we prove that every block $B$ admits
\begin{equation}\label{eq:intro-main}
\nabla^2_{BB}\phi_\se(\lambda)\preceq\bigl(\kappa_B+\nu_BF(\lambda)\bigr)I_B
\qquad\text{for every }\lambda,
\end{equation}
where $\nabla^2_{BB}$ is the diagonal block of the Hessian indexed by $B$, $\kappa_B$ the curvature that survives
at a solution and $\nu_B$ the rate at which it grows with the gap; Theorem~\ref{thm:hg-short} gives
both, and Section~\ref{sec:computable-geometry} gives bounds computable from $A$, $b$ and $\gamma$.
Neither constant depends on which block is drawn, so two sampling tables are built once, the step
sizes follow a deterministic schedule, and the accelerated triple $(u_k,v_k,y_k)$ can be stored so
that one sampled block costs $d_B:=|B|+\nnz(A_{B:})$ operations and no other coordinate is touched.
A block is a set of dual coordinates drawn together, and the partition is an input: singletons
are the default, while for transport the natural coarser choice is the two marginal families, the
source rows and the target rows, on which exact block minimization is the Sinkhorn half-step
(Section~\ref{sec:exp-short}).

\paragraph{Contributions.}\label{sec:intro-contrib}
We give an accelerated randomized block coordinate method for $\phi_\se$, so that acceleration and
sparse data access hold at once, and we build it on three things specific to the entropy dual rather
than to a smoothness abstraction. First, \eqref{eq:intro-main} itself: its constants are computable
from the data, it survives signed rows without any knowledge of the solution, and its growth
coefficient is optimal in its class up to the factor $\theta$. Second, that those constants are fixed \emph{before} the
block is drawn, which is what makes the lazy sparse representation exact and the second-moment terms
of the accelerated analysis cancel. Third, an arithmetic accounting in which the data a block reads
is weighted by that block's own curvature rather than by a worst-block factor;
Appendix~\ref{app:heterogeneous-cost} exhibits a family on which the two accountings differ
polynomially. The leading arithmetic cost is
$\widetilde O\bigl(r(\gamma\eps)^{-1/2}\sum_Bd_B\sqrt{\gamma\kappa_B}\bigr)$: every block contributes
the data it reads, $d_B$, weighted by its own curvature, and $\gamma\kappa_B$ carries no $\gamma$.
Here $r$ bounds the distance from the starting point to a dual solution.

\section{Two exact duals}\label{sec:duals-short}

Assume strict primal feasibility and the redundancy certificate $A^\top q=\mathbf1$,
$\ip{q}{b}=1$, and write $x_j(\lambda)=\exp((a_j^\top\lambda-c_j)/\gamma)$,
$Z(\lambda)=\sum_jx_j(\lambda)$ and $p_j=x_j/Z$ for the unnormalized and normalized responses;
Appendix~\ref{app:duals} states the assumption formally and collects the moment identities of the
two duals.
Keeping the simplex constraint and dualizing gives \eqref{eq:lse-short}, with softmax primal
response $p(\lambda)$; dropping the redundant equation first gives \eqref{eq:se-short}, whose
response is the same expression without the normalizing denominator. The two are related by an
exact one-dimensional minimization along $q$. Neither is new: \citet{genevay2016stochastic} state
both for transport, the unnormalized one as the dual and the log-sum-exp one as the semi-dual, and run
SAG on the first and SGD on the second. Which one an \emph{accelerated block} method wants is open.

\begin{proposition}[Exact link]\label{prop:dual-short}
Let $Z(\lambda)=\sum_jx_j(\lambda)$. Then
\[
\min_t\phi_\se(\lambda+tq)=\phi_\lse(\lambda)+\gamma,
\qquad t^\star(\lambda)=-\gamma\log Z(\lambda),
\]
and $x(\lambda+t^\star q)=p(\lambda)$. Writing $F_\se:=\phi_\se-\phi_\se^\star$ and
$F_\lse:=\phi_\lse-\phi_\lse^\star$,
\begin{align}
F_\se(\lambda)&=F_\lse(\lambda)+\gamma\bigl(Z(\lambda)-1-\log Z(\lambda)\bigr),\label{eq:gap-split}\\
F_\lse(\lambda)&=\gamma\KL(x^\star\|p(\lambda)).\label{eq:lse-kl}
\end{align}
\end{proposition}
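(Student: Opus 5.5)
The identity $A^\top q=\mathbf1$ gives $a_j^\top(\lambda+tq)=a_j^\top\lambda+t$. Hence $x_j(\lambda+tq)=e^{t/\gamma}x_j(\lambda)$, and with $\ip{q}{b}=1$ I will write
\[
\phi_\se(\lambda+tq)=-\ip{b}{\lambda}-t+\gamma e^{t/\gamma}Z(\lambda).
\]
This is strictly convex in $t$. Setting the derivative $-1+e^{t/\gamma}Z=0$ gives $t^\star=-\gamma\log Z(\lambda)$. At $t^\star$ we have $e^{t^\star/\gamma}Z=1$, so the minimum value is $-\ip{b}{\lambda}+\gamma\log Z+\gamma=\phi_\lse(\lambda)+\gamma$, since $\phi_\lse(\lambda)=-\ip{b}{\lambda}+\gamma\log Z(\lambda)$. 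In the same way, $x_j(\lambda+t^\star q)=x_j(\lambda)/Z(\lambda)=p_j(\lambda)$.

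Next I take the infimum of the first identity over $\lambda$, which gives $\phi_\se^\star=\phi_\lse^\star+\gamma$. Both optima are attained: strict feasibility and Slater's condition give zero duality gap and dual attainment, and in fact both optimal values equal minus the primal optimum, up to the constant. Subtracting, I get
\[
F_\se(\lambda)-F_\lse(\lambda)=\phi_\se(\lambda)-\min_t\phi_\se(\lambda+tq).
\]
The right side is the one-dimensional function $g(t)=-t+\gamma e^{t/\gamma}Z$ evaluated at $t=0$ minus its value at $t^\star$. That equals $\gamma Z-(\gamma\log Z+\gamma)=\gamma(Z-1-\log Z)$, which is \eqref{eq:gap-split}.

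For \eqref{eq:lse-kl} I use the primal optimum $x^\star$, which lies in the simplex with $Ax^\star=b$. The KKT conditions for the lse dual give $x^\star=p(\lambda^\star)$, i.e.\ $\gamma\log x_j^\star=a_j^\top\lambda^\star-c_j-\gamma\log Z(\lambda^\star)$. Let $s_j(\lambda)=(a_j^\top\lambda-c_j)/\gamma$, so that $\log p_j(\lambda)=s_j(\lambda)-\log Z(\lambda)$. Then
\[
\gamma\KL(x^\star\|p(\lambda))=\gamma\sum_jx_j^\star\log x_j^\star-\gamma\sum_jx_j^\star\bigl(s_j(\lambda)-\log Z(\lambda)\bigr).
\]
Because $\sum_j x_j^\star=1$ and $\sum_jx_j^\star a_j=b$, the last sum equals $\ip{b}{\lambda}/\gamma-\sum_jx_j^\star c_j/\gamma-\log Z(\lambda)$. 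Multiplying by $\gamma$, this turns it into $\ip{b}{\lambda}-\ip{c}{x^\star}-\gamma\log Z(\lambda)=-\phi_\lse(\lambda)-\ip{c}{x^\star}$. The first term is also evaluated using $\log x_j^\star=s_j(\lambda^\star)-\log Z(\lambda^\star)$, which gives
\[
\gamma\sum_jx_j^\star\log x_j^\star=\ip{b}{\lambda^\star}-\ip{c}{x^\star}-\gamma\log Z(\lambda^\star)=\phi_\lse^\star\ \text{-shifted}.
\]
Concretely, it equals $-\phi_\lse(\lambda^\star)-\ip{c}{x^\star}$. Taking the difference, the $\ip{c}{x^\star}$ terms cancel and leave $\phi_\lse(\lambda)-\phi_\lse^\star$.

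The only delicate step is justifying $x^\star=p(\lambda^\star)$ with attained $\lambda^\star$. Under the strict feasibility assumption of Appendix~\ref{app:duals}, standard Fenchel duality for the entropy gives this. I will invoke it there rather than reprove it.
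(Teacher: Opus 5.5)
Your proof is correct and is essentially the paper's argument: the same restriction $\phi_\se(\lambda+tq)=-\ip{b}{\lambda}-t+\gamma e^{t/\gamma}Z(\lambda)$ gives the exact link, and subtracting the two duals after $\phi_\se^\star=\phi_\lse^\star+\gamma$ gives \eqref{eq:gap-split}. For \eqref{eq:lse-kl} the paper reduces $\gamma\KL(x^\star\|p(\lambda))$ to $f_s(x^\star)+\phi_\lse(\lambda)$, with $f_s(x)=\ip{c}{x}+\gamma\sum_jx_j\log x_j$, and then invokes strong duality $f_s^\star=-\phi_\lse^\star$; you instead compute $f_s(x^\star)=-\phi_\lse(\lambda^\star)$ directly from $x^\star=p(\lambda^\star)$, which is the same KKT fact applied one step earlier.
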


\noindent Restricted to the line $\lambda+tq$, \eqref{eq:se-short} becomes
$-t+\gamma e^{t/\gamma}Z(\lambda)$ up to a constant, and its stationary point gives the claim
(Appendix~\ref{app:duals}).

Equation~\eqref{eq:gap-split} separates two different errors. The LSE gap is exactly the
Kullback--Leibler divergence to the optimal primal point; the SE gap is that divergence \emph{plus}
a penalty $\gamma(Z-1-\log Z)$ for the response carrying the wrong total mass, which vanishes only
at $Z(\lambda)=1$. The two are therefore not interchangeable as accuracy metrics, and an experiment
must say which it reports.

\section{Curvature of the SE dual}\label{sec:hg-short}
Fix a partition $\mathcal B$ of the $m$ dual coordinates. We write
$H_{BB}(\lambda)=\nabla^2_{BB}\phi_\se(\lambda)$ for the block Hessian,
$H^\star_{BB}:=H_{BB}(\lambda^\star)$ for its value at a dual solution, $F:=\phi_\se-\phi_\se^\star$
for the dual gap, and $\rho_B=\max_j\|A_{B,j}\|_2$ for the largest norm of a column restricted to
$B$. Dual solutions need not be unique, but $H^\star$ is: every $\lambda^\star$ gives the same
primal response $x^\star$, which strict convexity makes unique.

\subsection{The Hessian--gap inequality}

\begin{theorem}[Entropy Hessian--gap domination]\label{thm:hg-short}
For every $\theta>1$, every block $B$, and every $\lambda$,
\begin{equation}\label{eq:hg-short}
H_{BB}(\lambda)\preceq c_\theta H^\star_{BB}+\frac{\theta\rho_B^2}{\gamma^2}F(\lambda)I_B,
\qquad c_\theta=\theta\log\tfrac{\theta}{\theta-1}.
\end{equation}
In particular, for any $\bar H_B\ge\|H^\star_{BB}\|_2$ the choice
$\kappa_B=2\log2\,\bar H_B$ and $\nu_B=2\rho_B^2/\gamma^2$ satisfies \eqref{eq:intro-main}.
No sign assumption on $A$ is required.
\end{theorem}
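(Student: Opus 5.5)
The plan is to reduce \eqref{eq:hg-short} to a pointwise scalar inequality, factor by factor, using the explicit form of the Hessian and of the gap. Write $s_j(\lambda)=(a_j^\top\lambda-c_j)/\gamma$, so that $x_j(\lambda)=e^{s_j(\lambda)}$. Differentiating \eqref{eq:se-short} twice gives
\[
H_{BB}(\lambda)=\frac1\gamma\sum_{j=1}^n x_j(\lambda)\,A_{B,j}A_{B,j}^\top,
\qquad
H^\star_{BB}=\frac1\gamma\sum_{j=1}^n x_j^\star\,A_{B,j}A_{B,j}^\top .
\]
Here $x_j^\star=x_j(\lambda^\star)>0$ for a dual solution $\lambda^\star$. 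Such a solution exists under the strict feasibility assumption of Appendix~\ref{app:duals}, and $x^\star$ is unique, as noted before the theorem. Only the weights $x_j$ versus $x_j^\star$ differ between the two sums, and the rank-one matrices $A_{B,j}A_{B,j}^\top$ are positive semidefinite whatever the signs of the entries of $A$. This is why no sign assumption is needed.

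Next I express the gap as a separable Bregman divergence. Since $\nabla\phi_\se(\lambda^\star)=0$, we have $F(\lambda)=\phi_\se(\lambda)-\phi_\se(\lambda^\star)-\ip{\nabla\phi_\se(\lambda^\star)}{\lambda-\lambda^\star}$. The linear term $-\ip{b}{\lambda}$ cancels in this Bregman divergence. For the sum of exponentials, using $\log(x_j/x_j^\star)=s_j-s_j^\star$, one gets
\[
F(\lambda)=\gamma\sum_j\bigl(x_j-x_j^\star-x_j^\star\log\tfrac{x_j}{x_j^\star}\bigr)=\gamma\sum_j x_j^\star\,h(t_j),
\qquad h(t)=t-1-\log t,\quad t_j=x_j/x_j^\star .
\]

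The key step, and the only one with real content, is the scalar inequality $t\le c_\theta+\theta h(t)$ for all $t>0$. Equivalently, $g(t):=(\theta-1)t-\theta\log t-\theta+c_\theta\ge0$. The function $g$ is convex, and its minimum is at $t=\theta/(\theta-1)$. There $g=\theta-\theta\log\frac{\theta}{\theta-1}-\theta+c_\theta=0$ exactly. So the constant $c_\theta$ is precisely the one that makes the bound tight, which also hints at the optimality claimed in the introduction.

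Multiplying by $x_j^\star>0$ gives $x_j\le c_\theta x_j^\star+\theta x_j^\star h(t_j)$. I insert this factor by factor into $H_{BB}(\lambda)$, which is legitimate because each $A_{B,j}A_{B,j}^\top\succeq0$. Then I use $A_{B,j}A_{B,j}^\top\preceq\rho_B^2 I_B$:
\[
H_{BB}(\lambda)\preceq c_\theta H^\star_{BB}+\frac{\theta\rho_B^2}{\gamma}\sum_j x_j^\star h(t_j)\,I_B
=c_\theta H^\star_{BB}+\frac{\theta\rho_B^2}{\gamma^2}F(\lambda)I_B .
\]
For the particular choice, take $\theta=2$, so $c_2=2\log2$, and bound $H^\star_{BB}\preceq\bar H_B I_B$. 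This gives $\kappa_B=2\log2\,\bar H_B$ and $\nu_B=2\rho_B^2/\gamma^2$ in \eqref{eq:intro-main}.

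I expect no serious obstacle. The one point needing care is that the Bregman identity uses a true minimizer $\lambda^\star$ with $\nabla\phi_\se(\lambda^\star)=0$, so it relies on the existence of a dual solution.
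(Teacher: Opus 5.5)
Your proof is correct and is the paper's own argument. It uses the same Bregman identity $F=\gamma\sum_j x_j^\star h$ and the same scalar inequality: you write it in the ratio $x_j/x_j^\star$ with $h(\tau)=\tau-1-\log\tau$, the paper in the log-ratio $t_j=a_j^\top(\lambda-\lambda^\star)/\gamma$ with $h(t)=e^t-1-t$, related by $\tau=e^t$. As in the paper, the inequality is inserted termwise against the PSD factors $A_{B,j}A_{B,j}^\top$ and the growth term is then bounded by $\rho_B^2$. One caution on an aside that does not affect the proof: tightness of $c_\theta$ in the scalar inequality is not the optimality the paper claims, which concerns $\nu_B\ge\rho_B^2/\gamma^2$ and is proved separately in Proposition~\ref{prop:nu-tight}.
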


Both terms come from a single scalar inequality. Write $x_j(\lambda)=x_j^\star e^{t_j}$ with
$t_j=a_j^\top(\lambda-\lambda^\star)/\gamma$. The gap then has the exact Bregman form
$F(\lambda)=\gamma\sum_jx_j^\star h(t_j)$, $h(t)=e^t-1-t$, while the block Hessian quadratic form
at $u$ is $\gamma^{-1}\sum_jx_j^\star e^{t_j}(A_{B,j}^\top u)^2$. The Hessian is weighted by $e^t$
and the gap by $h(t)$, and the elementary inequality $e^t\le\theta h(t)+c_\theta$ converts one into
the other term by term; the gap-dependent piece is exactly the price of that conversion, and
$\theta=2$ gives $c_2=2\log2$. No sign condition on $A$ is needed, because what is bounded is the square
$(A_{B,j}^\top u)^2$, and no bounded set, because the right-hand side grows with $F(\lambda)$. The
growth coefficient is not improvable: any bound
$\nabla^2_{BB}\phi_\se(\lambda)\preceq\Psi_B+\nu_BF(\lambda)I_B$ valid for every $\lambda$ forces
$\nu_B\ge\rho_B^2/\gamma^2$, so \eqref{eq:hg-short} is optimal in its class up to $\theta$
(Proposition~\ref{prop:nu-tight}, Appendix~\ref{app:hg-proof}).

\subsection{Computing the constants}\label{sec:computable-geometry}
Theorem~\ref{thm:hg-short} is stated via $H^\star$, which we do not know, but the method needs only
a valid upper bound $\bar H_B$, and several are available without solving the problem. Universally
$\bar H_B=\rho_B^2/\gamma$, since $\sum_jx_j^\star=1$. If the rows are sign-oriented so that $A\ge0$, then
$\sum_jA_{ij}x_j^\star=b_i$ is known exactly and a Cauchy--Schwarz step over the nonzeros of each
block column supplies a second bound, so one may take
\begin{equation}\label{eq:h-overlap-short}
\bar H_B=\tfrac1\gamma\min\Bigl\{\rho_B^2,\ \max_{i\in B}\bigl[b_i\max_{j:A_{ij}\ne0}\omega_{Bj}A_{ij}\bigr]\Bigr\},
\quad \omega_{Bj}=|\{i\in B:A_{ij}\ne0\}| ,
\end{equation}
where the overlap count $\omega_{Bj}$ says how many rows of the block a single factor couples. Neither
term dominates: for singleton blocks $\omega=1$ and the second always wins, collapsing to
$\bar H_i=\|A_{i:}\|_\infty b_i/\gamma$, whereas for a block whose columns concentrate their mass on
one row the factor $\omega_{Bj}$ can make it the larger by $\omega_{\max}=\max_j\omega_{Bj}$. For signed rows the same idea works from the row mean and
the row range alone.

\begin{corollary}[Range--moment bound for signed rows]\label{cor:range-moment}
For row $i$ let $\alpha_i^-:=\min_jA_{ij}$ and $\alpha_i^+:=\max_jA_{ij}$, counting the implicit
zeros of sparse storage. Define
\begin{equation}\label{eq:h-range}
\mu_i:=(\alpha_i^-+\alpha_i^+)b_i-\alpha_i^-\alpha_i^+,\qquad
\bar H^{\rm rng}_i:=\mu_i/\gamma .
\end{equation}
Then $0\le H_{ii}^\star\le\bar H^{\rm rng}_i\le\|A_{i:}\|_\infty^2/\gamma$, so
$\kappa_i=2\log2\,\bar H^{\rm rng}_i$ and $\nu_i=2\|A_{i:}\|_\infty^2/\gamma^2$ are computable
without sign orientation and without knowing $x^\star$. If $\alpha_i^-=0$ this recovers
$\bar H_i=\|A_{i:}\|_\infty b_i/\gamma$. \prooflink{proof:range-moment}
\end{corollary}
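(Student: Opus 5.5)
The plan is to compute $H^\star_{ii}$ from the primal optimality conditions and bound it by a moment inequality that uses only the range of the row. At a dual solution the Hessian diagonal is $H^\star_{ii}=\gamma^{-1}\sum_j x_j^\star A_{ij}^2$. The optimal response $x^\star$ satisfies $Ax^\star=b$, and, via the redundancy certificate (Proposition~\ref{prop:dual-short}: at the SE optimum $Z=1$, equivalently $\ip{q}{b}=1$ together with $A^\top q=\mathbf1$), also $\sum_jx_j^\star=1$. Thus $x^\star$ is a probability vector on $\{1,\dots,n\}$, and row $i$ defines a random variable $X=A_{ij}$ with $j\sim x^\star$. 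This variable has mean $\E X=b_i$ and second moment $\gamma H^\star_{ii}$. It takes values in $[\alpha_i^-,\alpha_i^+]$, where the zeros of columns that miss row $i$ are included in the range because such $j$ carry mass under $x^\star$ with value $A_{ij}=0$.

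The key step is the elementary pointwise inequality $(X-\alpha_i^-)(X-\alpha_i^+)\le0$ on the range. Expanding and taking expectations gives
\[
\E X^2\le(\alpha_i^-+\alpha_i^+)\E X-\alpha_i^-\alpha_i^+=\mu_i .
\]
Dividing by $\gamma$ yields $H^\star_{ii}\le\bar H^{\rm rng}_i$. Nonnegativity $H^\star_{ii}\ge0$ is immediate from $x^\star\ge0$.

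For the upper comparison, set $M=\|A_{i:}\|_\infty=\max(|\alpha_i^-|,\alpha_i^+)$, and note that $\alpha_i^-\le0\le\alpha_i^+$ because zero lies in the range. Since $\mu_i$ is affine in $b_i$ and $b_i\in[\alpha_i^-,\alpha_i^+]$, its maximum over $b_i$ is attained at an endpoint, where it equals $(\alpha_i^+)^2$ or $(\alpha_i^-)^2$, both at most $M^2$. The special case $\alpha_i^-=0$ gives $\mu_i=\alpha_i^+b_i$. In that case $\alpha_i^+=\|A_{i:}\|_\infty$, because all entries are nonnegative, which recovers the singleton form of \eqref{eq:h-overlap-short}.

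Finally, I apply Theorem~\ref{thm:hg-short} with singleton blocks $B=\{i\}$. There $\|H^\star_{ii}\|_2=H^\star_{ii}\le\bar H^{\rm rng}_i$ and $\rho_{\{i\}}=\max_j|A_{ij}|=\|A_{i:}\|_\infty$, so the stated $\kappa_i$ and $\nu_i$ follow directly. The computability claim holds because $\mu_i$ involves only $b$ and row extremes of $A$. The only delicate point is justifying $\sum_jx_j^\star=1$ from the assumptions and remembering the implicit zeros in the range; the rest is routine.
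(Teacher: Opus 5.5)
Your proof is correct and is the paper's argument: the pointwise inequality $(a-\alpha_i^-)(a-\alpha_i^+)\le0$ averaged against $x^\star$, using $Ax^\star=b$ and $\mathbf1^\top x^\star=1$ (which the paper uses implicitly and you rightly make explicit), and then the observation that $\mu_i$ is affine in $b_i\in[\alpha_i^-,\alpha_i^+]$ with endpoint values $(\alpha_i^\mp)^2$, which is the paper's convex-combination parametrization in other words. One harmless slip: zero lies in the range only if the row has an (implicit) zero entry, so $\alpha_i^-\le0\le\alpha_i^+$ can fail for a dense row such as a mass row; nothing you use depends on it, because $\max(|\alpha_i^-|,\alpha_i^+)=\|A_{i:}\|_\infty$ and the endpoint bound hold in every case.
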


The proof is one line, from $(a-\alpha_i^-)(a-\alpha_i^+)\le0$ averaged against $x^\star$. Outside one
structural subclass these bounds can be arbitrarily loose; nothing is lost on scaled indicator rows,
$A_{ij}\in\{0,\rho_i\}$, where $A_{ij}^2=\rho_iA_{ij}$ collapses the row's second moment to its mean
and feasibility pins that to $b_i$, giving $H^\star_{ii}=\rho_ib_i/\gamma$ exactly. Incidence matrices, and hence transport, are of
this form (Appendix~\ref{app:geometry-proofs}). Summed over singleton blocks, the two constants
define the gap-dependent curvature dimension, which interpolates between the effective rank of
$H^\star$ and the spread of the row magnitudes (Section~\ref{sec:geometry}).

\section{Accelerated block coordinate descent on the SE dual}\label{sec:alg-short}
The method applies to any convex $C^2$ function $f:\R^m\to\R$ with an attained minimum satisfying
the block gap-growth (BGG) condition
\begin{equation}\label{eq:bgg}
0\preceq\nabla^2_{BB}f(u)\preceq(\kappa_B+\nu_BF(u))I_B ,\qquad F=f-f^\star,
\end{equation}
globally, with finite $\kappa_B,\nu_B\ge0$; Theorem~\ref{thm:hg-short} supplies it for $\phi_\se$.
Fix any $u^\star\in\arg\min f$, write $f^\star=f(u^\star)$, let $\mathcal F_k$ be the
$\sigma$-algebra of the first $k$ block draws, and let $U_B:\R^{|B|}\to\R^m$ be the zero-padding
injection, so that $U_B\nabla_Bf(u)$ is the block gradient viewed in $\R^m$. Blocks with $\kappa_B=\nu_B=0$ are dropped as exact
(Appendix~\ref{app:acc-proof}).

Set $C_0=\sum_B\sqrt{\kappa_B}$ and $C_1=\sum_B\sqrt{\nu_B}$: $C_0$ multiplies the
accuracy-dependent term and $C_1$, which measures how fast curvature inflates, carries no negative
power of $\eps$. We assume $C_0>0$; otherwise $F\equiv0$ by a Gronwall argument.

Ideally $B$ is sampled with probability proportional to $\sqrt{\kappa_B+5\nu_B\Delta_k}$, where
$\Delta_k$ is the gap level the potential currently guarantees (Lemma~\ref{lem:presampling}). The
separable surrogate $\sqrt{\kappa_B}+\sqrt{5\nu_B\Delta_k}$ loses at most a factor $\sqrt2$ and is a
mixture of \emph{two fixed distributions} $\pi^{(0)}_B\propto\sqrt{\kappa_B}$ and
$\pi^{(1)}_B\propto\sqrt{\nu_B}$, so two alias tables built once in linear time
\citep{vose1991linear} give $O(1)$ expected sampling with no per-iteration scan.

\begin{algorithm}[t]
\caption{Accelerated block step with precomputed curvature bounds}
\label{alg:main-short}
\small
\begin{algorithmic}[1]
\Require anchor $z$; $\bar\Delta\ge F(z)$; $r\ge\|z-u^\star\|$; failure level $\delta$; target $\eps$; alias tables for $\pi^{(0)}$ and, if $C_1>0$, $\pi^{(1)}$.
\State If $C_0=0$, $r=0$, $\bar\Delta=0$, or $\eps\ge\bar\Delta$, \Return $z$.
\State $T_0\gets r^2/(2\bar\Delta)$, $u_0=v_0\gets z$, $\Pi\gets r^2/\delta$.
\For{$k=0,1,\ldots$}
  \State $\Delta_k\gets\Pi/T_k$ \Comment{gap level the potential guarantees}
  \State $M_k\gets C_0+\sqrt{5\Delta_k}\,C_1$
  \State choose $a_{k+1}>0$ with $M_k^2a_{k+1}^2=T_k+a_{k+1}$; \ $T_{k+1}\gets T_k+a_{k+1}$
  \State $y_k\gets(T_ku_k+a_{k+1}v_k)/T_{k+1}$
  \State with probability $C_0/M_k$ draw $B_k\sim\pi^{(0)}$, else draw $B_k\sim\pi^{(1)}$
  \State $\sqrt{\bar L_{k,B_k}}\gets\sqrt{\kappa_{B_k}}+\sqrt{5\nu_{B_k}\Delta_k}$; \ $p_{k,B_k}\gets\sqrt{\bar L_{k,B_k}}/M_k$
  \State $g\gets\nabla_{B_k}f(y_k)$; \ pick $u_{k+1}\in y_k+\operatorname{range}U_{B_k}$ with $f(u_{k+1})\le f(y_k-U_{B_k}g/\bar L_{k,B_k})$
  \State $v_{k+1}\gets v_k-(a_{k+1}/p_{k,B_k})U_{B_k}g$
  \If{$T_{k+1}\ge\Pi/\eps$} \State \Return $u_{k+1}$ \EndIf
\EndFor
\end{algorithmic}
\end{algorithm}

Algorithm~\ref{alg:main-short} realizes the mixture exactly: every block's curvature bound is
determined before the draw, and only the sampled block's constant and gradient are evaluated. The
constant $5$ is a safety margin over $F(y_k)\le4\Delta_k$. The explicit step always satisfies line~10, and
so does exact block minimization; on the event $\Phi_k\le\Pi$ (the potential defined below),
Lemma~\ref{lem:presampling} turns line~10 into the decrease $\|g\|_2^2/(2\bar L_{k,B_k})$.

A bound on $F(u_k)$ alone says nothing about the curvature at the extrapolated point $y_k$; the
potential controls both the gap at $u_k$ and the displacement of $v_k$ from a minimizer.

\begin{lemma}[Pre-sampling block model]\label{lem:presampling}
Assume \eqref{eq:bgg}. Let $T,a,\Pi>0$, put $T_+=T+a$ and $\Delta=\Pi/T$, and let
$M=C_0+\sqrt{5\Delta}\,C_1$ satisfy $M^2a^2=T_+$. If
\begin{equation}\label{eq:presampling-potential}
TF(u)+\tfrac12\|v-u^\star\|_2^2\le\Pi,\qquad y:=\frac{Tu+av}{T_+},
\end{equation}
then $F(y)\le4\Delta$. Consequently every block bound
$\sqrt{\bar L_B}:=\sqrt{\kappa_B}+\sqrt{5\nu_B\Delta}$ is valid at $y$,
\begin{equation}\label{eq:presampling-model}
\nabla^2_{BB}f(y)\preceq\bar L_BI_B ,
\end{equation}
and $y-U_B\nabla_Bf(y)/\bar L_B$ decreases $f$ by at least
$\|\nabla_Bf(y)\|_2^2/(2\bar L_B)$.
\prooflink{proof:presampling}
\end{lemma}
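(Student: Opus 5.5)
The plan is to split the claim into three parts. First, a full-space curvature bound derived from \eqref{eq:bgg}. Second, a bootstrap, or continuity, argument along a carefully chosen segment that ends at $y$; this gives $F(y)\le4\Delta$. Third, the standard descent lemma for the block step, again closed by a bootstrap. The constant $5$ in $\bar L_B$ is the slack that makes each bootstrap close strictly.

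\emph{Step 1 (full Hessian from block bounds).} For any $w\in\R^m$ with block parts $w_B$, write $w=\sum_BU_Bw_B$. By Cauchy--Schwarz in the $\nabla^2f$ seminorm,
\[
w^\top\nabla^2f(x)w\le\Bigl(\sum_B\sqrt{w_B^\top\nabla^2_{BB}f(x)w_B}\Bigr)^2\le\Bigl(\sum_B\sqrt{\kappa_B+\nu_BF(x)}\,\|w_B\|\Bigr)^2 .
\]
Using $\|w_B\|\le\|w\|$ and $\sqrt{\kappa_B+\nu_BF}\le\sqrt{\kappa_B}+\sqrt{\nu_BF}$, this gives $\nabla^2f(x)\preceq L(F(x))I$ with $L(\phi):=(C_0+C_1\sqrt\phi)^2$. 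Two standard consequences follow. First, if $F(x)\le\phi_0$, then $\|\nabla f(x)\|^2\le2L(\phi_0)F(x)$. To see this, run the descent step $x-\nabla f(x)/L(\phi_0)$, and use a continuity argument to show that $F$ stays $\le F(x)$ along it, so the curvature bound applies; then compare with $f^\star$. Second, the same continuity argument applied to $y-tU_B\nabla_Bf(y)$, $t\in[0,1/\bar L_B]$, will give the final claim of the lemma once we know $F(y)\le4\Delta$. Along that path $F\le F(y)\le4\Delta<5\Delta$, so $\nabla^2_{BB}f\preceq(\kappa_B+5\nu_B\Delta)I\preceq\bar L_BI$. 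This yields \eqref{eq:presampling-model} and the decrease $\|\nabla_Bf(y)\|^2/(2\bar L_B)$.

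\emph{Step 2 (the anchor point).} Put $\alpha=a/T_+$ and $\hat y:=(1-\alpha)u+\alpha u^\star$. By convexity, $F(\hat y)\le(1-\alpha)F(u)\le(1-\alpha)\Pi/T<\Delta$, using \eqref{eq:presampling-potential}. Moreover $y=\hat y+\alpha(v-u^\star)$, and $D:=\|v-u^\star\|\le\sqrt{2\Pi}$. The key numerical identity is
\[
\alpha^2M^2D^2\le\frac{a^2M^2}{T_+^2}\,2\Pi=\frac{2\Pi}{T_+}\le2\Delta .
\]

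\emph{Step 3 (bootstrap on the segment; this is the main obstacle).} Let $\psi(s)=F(\hat y+s\alpha(v-u^\star))$ for $s\in[0,1]$. Suppose, as the bootstrap hypothesis, that $\psi\le4\Delta$ on $[0,s_0]$. Then Step 1 gives the second-derivative bound
\[
\psi''\le L(4\Delta)\alpha^2D^2=(C_0+2C_1\sqrt\Delta)^2\alpha^2D^2 .
\]
For the initial slope, the gradient bound with $\phi_0=\Delta$ gives
\[
\psi'(0)\le\|\nabla f(\hat y)\|\,\alpha D\le\sqrt{2L(\Delta)\Delta}\,\alpha D .
\]
Let $\eta:=(C_0+2C_1\sqrt\Delta)/M\le1$. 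Combining the two bounds with Step 2,
\[
\psi(s)\le F(\hat y)+\sqrt{2\Delta}\,\eta M\alpha D+\tfrac12\eta^2M^2\alpha^2D^2<\Delta+2\Delta+\Delta=4\Delta
\]
on $[0,s_0]$. The inequality is strict because $F(\hat y)<\Delta$, and $\eta<1$ whenever $C_1>0$. By continuity the hypothesis therefore propagates to all of $[0,1]$, and $\psi(1)=F(y)\le4\Delta$.

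The delicate point is precisely this circularity. The curvature bound is only usable once $F$ is known to be bounded on the path, so everything must be phrased as "the first exit time from $\{F\le4\Delta\}$ cannot occur." I expect the bookkeeping that makes the constants $1+2+1\le4$ and the margin $\sqrt5>2$ match exactly to be the part needing care. If the gradient bound in Step 1 turns out too lossy, I would instead control $\psi'(0)$ through convexity, using $\psi'(0)\le\psi(1)-\psi(0)$ inside the same bootstrap.
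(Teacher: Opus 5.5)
Your proof is correct and follows the paper's route: the same anchor $\bar y=(1-a/T_+)u+(a/T_+)u^\star$, the bound $\|y-\bar y\|\le\sqrt{2\Delta}/M$ from $M^2a^2=T_+$, a gradient bound at the anchor, and a first-exit Taylor argument along the segment giving $\Delta+2\Delta+\Delta$. The differences are cosmetic, so your convexity fallback is not needed:
\begin{itemize}
\item You run the bootstrap at level $4\Delta$, using $C_0+2C_1\sqrt\Delta\le M$ and closing it with the strict bound $F(\bar y)<\Delta$; the paper runs it at $5\Delta$, using $C(5\Delta)\le M$.
\item You bound $\|\nabla f(\bar y)\|$ by a full-space descent step; the paper sums the block gradient bounds.
\end{itemize}
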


\noindent Every quantity in \eqref{eq:presampling-model} is $\mathcal F_k$-measurable at iteration
$k$, so the whole sampling law of line~8 is determined by the potential rather than by the draw.

\begin{theorem}[One accelerated phase]\label{thm:phase-short}
Assume \eqref{eq:bgg} with $C_0>0$, and let the inputs of Algorithm~\ref{alg:main-short} satisfy
$\bar\Delta\ge F(z)$, $r\ge\|z-u^\star\|_2$ for some minimizer $u^\star$, $\delta\in(0,1)$ and
$0<\eps<\bar\Delta$. Let $N$ be the first index with $T_N\ge\Pi/\eps$. Then
$\Prob(F(u_N)\le\eps)\ge1-\delta$ and
\begin{equation}\label{eq:query-short}
N=\widetilde O\!\left(1+\frac{rC_0}{\sqrt{\delta\eps}}+\frac{rC_1}{\sqrt\delta}\right).
\end{equation}
\prooflink{proof:phase-short}
\end{theorem}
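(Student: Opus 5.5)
The argument is a supermartingale analysis of the potential $\Phi_k:=T_kF(u_k)+\tfrac12\|v_k-u^\star\|_2^2$, stopped the first time it exceeds $\Pi$. The high-probability guarantee then comes from Markov's inequality. The iteration count comes from the growth of $T_k$ forced by $M_k^2a_{k+1}^2=T_{k+1}$.

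\emph{Initialization.} At $k=0$ we have $\Phi_0=T_0F(z)+\tfrac12\|z-u^\star\|^2\le r^2/2+r^2/2=r^2=\delta\Pi$, using $T_0=r^2/(2\bar\Delta)$ and $F(z)\le\bar\Delta$.

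\emph{One-step inequality.} On the event $\{\Phi_k\le\Pi\}$, which is $\mathcal F_k$-measurable, Lemma~\ref{lem:presampling} applies with $(T,a,\Pi)=(T_k,a_{k+1},\Pi)$. It gives $\nabla^2_{BB}f(y_k)\preceq\bar L_{k,B}I_B$ for every block $B$, hence the decrease $f(u_{k+1})\le f(y_k)-\|\nabla_{B_k}f(y_k)\|^2/(2\bar L_{k,B_k})$. I then run the standard accelerated-coordinate estimate-sequence argument (Nesterov--Stich/Allen-Zhu style). Expanding $\tfrac12\|v_{k+1}-u^\star\|^2$ gives a cross term and a variance term.

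The cross term has conditional expectation $-a_{k+1}\ip{\nabla f(y_k)}{v_k-u^\star}$; this needs $\sum_B p_{k,B}\cdot(1/p_{k,B})U_B\nabla_B f=\nabla f$. The mixture sampling realizes exactly $p_{k,B}=(\sqrt{\kappa_B}+\sqrt{5\nu_B\Delta_k})/M_k$, and these probabilities sum to one by the definition of $M_k$.

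The variance term equals $\sum_B a_{k+1}^2\|\nabla_Bf\|^2/(2p_{k,B})$. Since $a_{k+1}^2/p_{k,B}^2=T_{k+1}/\bar L_{k,B}$, it is cancelled by $T_{k+1}$ times the expected decrease. This is where the choice $M_k^2a_{k+1}^2=T_{k+1}$ and the pre-sampling measurability are used.

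Convexity splits $T_{k+1}f(y_k)$ into $T_kf(u_k)+a_{k+1}f^\star$ minus the matching inner products, because $y_k$ is the convex combination defined in line~7. Together these yield
\[
\E[\Phi_{k+1}\mid\mathcal F_k]\le\Phi_k\quad\text{on }\{\Phi_k\le\Pi\}.
\]

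\emph{Stopping and Markov.} Define the stopped process $\tilde\Phi_k=\Phi_{k\wedge\tau}$, where $\tau$ is the first time $\Phi_k>\Pi$. This is a nonnegative supermartingale, so Ville's maximal inequality gives $\Prob(\sup_k\tilde\Phi_k>\Pi)\le\Phi_0/\Pi\le\delta$. Off this event, $T_NF(u_N)\le\Pi$, and therefore $F(u_N)\le\Pi/T_N\le\eps$.

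\emph{Counting iterations.} From $M_k^2a^2=T_k+a$ we get $a_{k+1}\ge\sqrt{T_{k+1}}/M_k$, so $\sqrt{T_{k+1}}-\sqrt{T_k}\ge 1/(2M_k)$. Two regimes follow.

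When $C_0\ge\sqrt{5\Delta_k}C_1$, we have $M_k\le2C_0$, and $\sqrt{T}$ grows linearly at rate $1/(4C_0)$. Reaching $T=\Pi/\eps=r^2/(\delta\eps)$ takes $O(rC_0/\sqrt{\delta\eps})$ steps.

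When $\sqrt{5\Delta_k}C_1$ dominates, $M_k\lesssim C_1\sqrt{\Pi/T_k}$, so $\sqrt{T_{k+1}}-\sqrt{T_k}\gtrsim\sqrt{T_k}/(C_1\sqrt\Pi)$. Thus $T$ grows geometrically, at ratio $1+\Omega(1/(C_1\sqrt{\Pi}))$ per step. Going from $T_0$ to $\Pi/\eps$ costs $O(C_1\sqrt\Pi\log(\bar\Delta/(\delta\eps)))=\widetilde O(rC_1/\sqrt\delta)$ steps.

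Adding the two regimes gives \eqref{eq:query-short}.

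The main obstacle is the one-step inequality. The decrease bound is only valid on the event $\{\Phi_k\le\Pi\}$, which is why the supermartingale must be stopped rather than bounded in plain expectation. The cancellation of the variance term also depends on the bound $\bar L_{k,B}$ being fixed before $B_k$ is drawn. I would check carefully that Lemma~\ref{lem:presampling}'s hypotheses hold at every index up to $\tau$; the inequality $F(y)\le4\Delta$ inside that lemma is what makes this possible.
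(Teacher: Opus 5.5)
Your proposal is correct and follows the paper's proof essentially step for step. It uses the same potential $\Phi_k$, the same pre-sampling validity from Lemma~\ref{lem:presampling}, and the cancellation $a_{k+1}^2/p_{k,B}=T_{k+1}p_{k,B}/\bar L_{k,B}$; it stops the same nonnegative supermartingale and applies a maximal inequality (Ville's for you, Doob's in the paper) from $\Phi_0\le r^2=\delta\Pi$; and it counts the growth of $\sqrt{T_k}$ in the same two regimes, split at $C_0=\sqrt{5\Delta_k}\,C_1$.

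One small slip, which does not affect the result. Write $\Gamma=\sqrt{5\Pi}\,C_1$, so the growth-dominated regime multiplies $\sqrt{T_k}$ by at least $1+1/(4\Gamma)$ per step. That regime then costs $O((1+\Gamma)\log(\cdot))$ steps, not $O(\Gamma\log(\cdot))$, which differs when $\Gamma<1$; the extra logarithm is absorbed by the $\widetilde O(1)$ term.
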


The proof runs the potential $\Phi_k=T_kF(u_k)+\tfrac12\|v_k-u^\star\|^2$. On the event
$\Phi_k\le\Pi$, the point $y_k$ lies in $\{F\le5\Delta_k\}$, where the block model is valid, and this
is known before the draw, so no segment solve or gap evaluation is needed. The quadratic terms then
cancel because $a_{k+1}^2/p_{k,B}=T_{k+1}p_{k,B}/\bar L_{k,B}$, which is how $a_{k+1}$ is chosen in
line~6, and Doob's maximal inequality for the stopped supermartingale turns $\Phi_0\le r^2=\delta\Pi$
into the failure bound (Appendix~\ref{app:acc-proof}). Since the schedule $T_k$ does not depend on
the draws, $N$ is deterministic and the expected work is $\sum_{k<N}\E[d_{B_k}]$.

\paragraph{Arithmetic cost.}
Let $d_B$ be the cost of touching block $B$ and set $S_0=\sum_Bd_B\sqrt{\kappa_B}$,
$S_1=\sum_Bd_B\sqrt{\nu_B}$: the same two channels, weighted by the data each block reads. Low
effective dimension and low access cost are different advantages, and the pair
$(C_\bullet,S_\bullet)$ keeps them apart (Table~\ref{tab:constants-main},
Appendix~\ref{app:acc-proof}).

\begin{theorem}[Sparse arithmetic work]\label{thm:work-short}
Assume the hypotheses of Theorem~\ref{thm:phase-short} for $\phi_\se$, with $C_0>0$ and
$0<\eps<\bar\Delta$. Run
$J_{\rm rep}:=\lceil\log(2\bar\Delta/\eps)/\log3\rceil$ independent copies of
Algorithm~\ref{alg:main-short} from the \emph{same anchor} $z$, each with failure level $1/3$ and
target $\eps/2$, and return the point $\widehat u$ of smallest objective among $z$ and the outputs.
Then $\E F(\widehat u)\le\eps$, and with
$\chi:=\min\{\sqrt{2\bar\Delta}/C_0,\,1/C_1\}$ for $C_1>0$ and $\chi:=0$ otherwise,
\begin{equation}\label{eq:work-short}
\E W_{\rm opt}=\widetilde O\!\left(\frac{S_0}{C_0}+\frac{rS_0}{\sqrt\eps}+rS_1+\chi S_1\right),
\end{equation}
including phase-end comparisons, best-candidate storage and resets, on top of
$W_{\rm init}=O(\nnz(A)+m+n)$ for anchor projections, constants and alias tables, and again for a
full primal response.
\end{theorem}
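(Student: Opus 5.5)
The plan has three parts: boost the probability-$2/3$ guarantee of Theorem~\ref{thm:phase-short} into an expectation bound by taking the best of independent runs, then bound the expected work of one phase by charging each iteration the data of the block it draws.

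\textbf{Accuracy.} Each copy is run with $\delta=1/3$ and target $\eps/2$, so by Theorem~\ref{thm:phase-short} each copy independently returns a point with $F\le\eps/2$ with probability at least $2/3$. Since $z$ is among the candidates and the returned $\widehat u$ minimizes the objective over all of them, $F(\widehat u)\le\bar\Delta$ always. If at least one copy succeeds, then $F(\widehat u)\le\eps/2$. All copies fail with probability at most $3^{-J_{\rm rep}}\le\eps/(2\bar\Delta)$. Hence
\[
\E F(\widehat u)\le\eps/2+\bar\Delta\cdot\eps/(2\bar\Delta)=\eps .
\]
Because every copy starts from the same anchor $z$, the hypotheses $\bar\Delta\ge F(z)$ and $r\ge\|z-u^\star\|$ hold for each copy without any re-estimation.

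\textbf{Work of one phase.} The schedule $T_k$, $M_k$, $\Delta_k$ is deterministic, so $N$ is deterministic and the work is $\sum_{k<N}\E[d_{B_k}]$, up to $O(1)$ overhead per step. Line~8 samples $B$ with probability $p_{k,B}=(\sqrt{\kappa_B}+\sqrt{5\nu_B\Delta_k})/M_k$. This gives
\[
\E d_{B_k}=\frac{S_0+\sqrt{5\Delta_k}\,S_1}{C_0+\sqrt{5\Delta_k}\,C_1}
\le\frac{S_0}{C_0}+\frac{S_1}{C_1}.
\]
The bound above is too coarse, so I would instead sum the two channels separately. Write $\E d_{B_k}=(S_0+\sqrt{5\Delta_k}S_1)/M_k$ and use the step relation $M_ka_{k+1}=\sqrt{T_{k+1}}$. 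Then
\[
\sum_k \frac{1}{M_k}=\sum_k \frac{a_{k+1}}{\sqrt{T_{k+1}}}\lesssim \sqrt{T_N}\approx\sqrt{\Pi/\eps}=\frac{r\sqrt3}{\sqrt\eps},
\]
by comparing the sum with an integral of $T^{-1/2}\,dT$. This yields the $rS_0/\sqrt\eps$ term. For the second channel I would bound
\[
\frac{\sqrt{\Delta_k}}{M_k}=\frac{\sqrt{\Pi}\,a_{k+1}}{\sqrt{T_k T_{k+1}}}\lesssim \sqrt\Pi\,\frac{a_{k+1}}{T_k},
\]
and summing with a logarithmic integral gives $\sqrt\Pi\,\log(T_N/T_0)$. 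Here $T_N/T_0\approx 2\bar\Delta\Pi/(r^2\eps)$ is polylogarithmic in the relevant sense, so this becomes the $\widetilde O(rS_1)$ term. The first iteration, or the regime where $a_{k+1}$ is comparable to $T_k$, produces the additive terms $S_0/C_0$ and $\chi S_1$. In that regime I would use that $\E d_{B_k}\le S_0/C_0$ when the $\kappa$ channel dominates, and that $\E d_{B_k}\le S_1/C_1$ otherwise. The number of such early steps is $O(1+\log)$, and $\chi$ is the crossover scale at which $\sqrt{\Delta_k}C_1$ overtakes $C_0$. Multiplying everything by $J_{\rm rep}=O(\log(\bar\Delta/\eps))$ is absorbed into $\widetilde O$.

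\textbf{Implementation and initialization.} A lazy representation of $(u_k,v_k,y_k)$ is needed so that one step costs $O(d_B)$. Following the standard affine reparametrization, I would store $u_k$ and $v_k$ as combinations of two vectors with scalar coefficients, and maintain the primal factors $x_j$ for $j$ in the support of $A_{B:}$. The exactness of this representation rests on the constants being fixed before the draw, as remarked after Lemma~\ref{lem:presampling}.

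Phase-end comparisons require $F$ at each output, which needs a full primal response at cost $O(\nnz(A)+n)$. I would charge this once per copy, which is only logarithmically many times, alongside the alias tables and constants in $W_{\rm init}$.

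\textbf{Main obstacle.} I expect the hardest step to be the careful summation of $\sqrt{\Delta_k}/M_k$ in the early, non-asymptotic range of the recursion $M_k^2a_{k+1}^2=T_k+a_{k+1}$. That is where the terms $S_0/C_0$ and $\chi S_1$ arise. My first approach would be to split the iterations at the index where $C_0=\sqrt{5\Delta_k}\,C_1$ and treat the two regimes separately.
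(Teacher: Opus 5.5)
Your accuracy argument and the skeleton of the work bound agree with the paper: the deterministic schedule, the per-step mediant $\E d_{B_k}=(S_0+\sqrt{5\Delta_k}\,S_1)/M_k$, and the accuracy channel telescoped through $1/M_k=a_{k+1}/\sqrt{T_{k+1}}$. The genuine gap is in the implementation part.

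The theorem counts phase-end comparisons, best-candidate storage and resets \emph{inside} $\E W_{\rm opt}$. It allows dense work only for one initialization pass and one output pass. You instead form a full primal response for every copy, which adds $J_{\rm rep}$ passes of cost $O(\nnz(A)+n)$. You also leave unaccounted the reset of the lazy state between copies and the storage of the best candidate, each naively $O(m+n)$. That proves a weaker statement, whose surplus the right-hand side of \eqref{eq:work-short} need not dominate.

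The missing observation is that comparisons need only objective differences from the \emph{fixed} anchor, not $F$ (which would require $f^\star$), and these differences are sparse. Store $e^0_j=\exp((\zeta^0_j-c_j)/\gamma)$ once. Then the candidate displacement $\hat h=\tilde v_N+w_N/T_N$ satisfies $\phi_\se(z+\hat h)-\phi_\se(z)=\gamma\sum_{j\in J}e^0_j\bigl(e^{(A^\top\hat h)_j/\gamma}-1\bigr)-\sum_{i\in I}b_i\hat h_i$. Here $I$ is the set of visited rows and $J$ the factors incident to them, with $|I|+|J|\le\sum_{k<N}d_{B_k}$. Touched-index lists make resets and candidate copies cost the same, and $z$ enters the comparison as the difference $0$.

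Relatedly, what must be maintained are the projections $A^\top z$, $A^\top\tilde v_k$ and $A^\top w_k$, not the factors $x_j$. Since $y_k=z+\tilde v_k+w_k/T_{k+1}$, every factor on the support of $A^\top w_k$ changes at every step through $T_{k+1}$. Those factors are recomputed on demand via \eqref{eq:sparse-grad}.

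The transient terms are also not obtained as you describe. The term $S_0/C_0$ is the overshoot, not an early-step cost. From $\sqrt{T_N}-\sqrt{T_{N-1}}\le1/M_{N-1}\le1/C_0$ you get $\sqrt{T_N}\le\sqrt{\Pi/\eps'}+1/C_0$ at the phase target $\eps'=\eps/2$; your ``$\approx$'' skips this step.

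In the growth channel, $a_{k+1}/T_k\ge\log(T_{k+1}/T_k)$, so your integral comparison bounds the sum from \emph{below}. It becomes an upper bound only where that ratio is bounded. Your early-step patch does not yield $\chi S_1$, for two reasons.
\begin{itemize}
\item The claim $\E d_{B_k}\le S_0/C_0$ under $\kappa$-dominance is false: the mediant puts weight $\sqrt{5\Delta_k}\,C_1/M_k>0$ on $S_1/C_1$.
\item Charging $S_1/C_1$ per early step exceeds $\chi S_1$ whenever $\sqrt{2\bar\Delta}\,C_1<C_0$.
\end{itemize}

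The fix within your split is to bound only the growth part of each early step: $\sqrt{5\Delta_k}\,S_1/M_k\le\min\{S_1/C_1,\sqrt{5\Delta_0}\,S_1/C_0\}$, using $\Delta_k\le\Delta_0=\Pi/T_0=2\bar\Delta/\delta$.

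The paper avoids the split altogether. Put $s_k=\sqrt{T_k}$ and $\Gamma=\sqrt{5\Pi}\,C_1$. The growth part equals $\sqrt{5\Pi}\,S_1/(C_0s_k+\Gamma)\le2\sqrt{5\Pi}\,S_1x_k$, where $x_k=(s_{k+1}-s_k)/s_k\le\vartheta_0:=1/(C_0s_0+\Gamma)$ for every $k$. Then $x\le(1+\vartheta_0)\log(1+x)$ on $[0,\vartheta_0]$ and $\sum_k\log(1+x_k)=\log(s_N/s_0)$ give $O\bigl(\sqrt\Pi\,S_1(1+\vartheta_0)\log(s_N/s_0)\bigr)$. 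Finally, $\sqrt\Pi\,\vartheta_0\le\min\{\sqrt{2\bar\Delta}/(\sqrt\delta\,C_0),1/(\sqrt5\,C_1)\}$, which is exactly where $\chi$ comes from.
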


The repetitions turn the $1/\sqrt\delta$ of \eqref{eq:query-short} into a logarithm; the anchor
stays fixed because a better objective value at a new anchor does not imply a smaller distance to
$u^\star$; restarting from new anchors instead gives the same guarantee with the sublevel diameter in
place of $r$ (Proposition~\ref{prop:restart}). The same repetitions also give
$\Prob(F(\widehat u)\le\eps/2)\ge1-3^{-J_{\rm rep}}$.

Only $rS_0/\sqrt\eps$ carries a negative power of $\eps$, while $rS_1$ scales as $1/\gamma$: on
the balanced family of Proposition~\ref{prop:balanced-family} the ratio is
$\sqrt{m\eps/(\gamma s_{\rm col}\log2)}$, so the growth term takes over once
$\eps\gtrsim\gamma s_{\rm col}/m$, and the leading cost quoted in Section~\ref{sec:intro-contrib}
is the small-$\eps$ face of Theorem~\ref{thm:work-short}. For signed rows
Corollary~\ref{cor:range-moment} replaces $\kappa_i$ by the range--moment constant throughout.

\subsection{The lazy sparse representation}\label{sec:lazy}
The representation below is available only because the constants are fixed in advance. Inside one
phase, write $v_k=z+\tilde v_k$
and $w_k=T_k(u_k-v_k)$; the interpolation point then factors as
$y_k=z+\tilde v_k+w_k/T_{k+1}$, and the two state vectors obey, with
$\xi=a_{k+1}/p_{k,B_k}$,
\begin{equation}\label{eq:lazy-updates}
\tilde v_{k+1}=\tilde v_k-\xi U_{B_k}g,\qquad
w_{k+1}=w_k+T_{k+1}\bigl(\xi U_{B_k}g+\eta_k\bigr),\qquad
\eta_k:=u_{k+1}-y_k .
\end{equation}
The displacement $\eta_k$ is supported on $B_k$ by the restriction in line~10, and equals
$-U_{B_k}g/\bar L_{k,B_k}$ for the explicit step, so both updates change only the sampled
coordinates whichever admissible $u_{k+1}$ is taken. Maintaining the three projections
$\zeta^0=A^\top z$, $\zeta^v_k=A^\top\tilde v_k$ and $\zeta^w_k=A^\top w_k$ lets the sampled
gradient be read off as
\begin{equation}\label{eq:sparse-grad}
(\nabla_{B_k}\phi_\se(y_k))_i+b_i
=\sum_{j:A_{ij}\ne0}A_{ij}\exp\bigl(\tfrac{\zeta^0_j+\zeta^v_{k,j}+\zeta^w_{k,j}/T_{k+1}-c_j}{\gamma}\bigr),
\end{equation}
touching only the rows of $B_k$ and the primal factors incident to them. Each projection update is
supported on $\supp(A_{B_k:})$, so one sampled block costs $O(d_B)$ with
$d_B=|B|+\nnz(A_{B:})$.
Phase safeguards are sparse too: the objective difference from the anchor is evaluable over the
visited coordinates alone, so accepts, rejects and resets add no hidden $O(m)$ or $O(n)$ rescan
(Appendix~\ref{app:sparse-alg}).

\section{The LSE route}\label{sec:lse-short}
Retaining the normalization constraint destroys separability and buys global smoothness of every
order. With $p(\lambda)$ the softmax response,
\[
\nabla\phi_\lse(\lambda)=Ap(\lambda)-b,\qquad
\nabla^2\phi_\lse(\lambda)=\tfrac1\gamma A(\Diag p-pp^\top)A^\top,
\]
and two things change. The gradient norm \emph{is} the primal infeasibility of the normalized
response; $\nabla\phi_\se=Ax(\lambda)-b$ is the residual of the unnormalized one, so the SE
\emph{coordinate} method has a stopping criterion but no cheap way to maintain it. And the Hessian is
a covariance rather than a second moment, so every derivative is bounded on the subspace $\mathcal H$ orthogonal to the flat directions of
$\phi_\lse$, and each fixed order $\ell$ is Lipschitz with $O_\ell(\rho^{\ell+1}/\gamma^\ell)$
(Appendix~\ref{app:tensor}).

\begin{theorem}[Global LSE geometry]\label{thm:lse-geometry}
Let $\Delta_A:=\max_{j,k}\|a_j-a_k\|_2$. Then for every $\lambda$ and all $u,v,w$,
\begin{equation}\label{eq:lse-constants}
\begin{gathered}
\|\nabla^2\phi_\lse(\lambda)\|_2\le\frac{\Delta_A^2}{4\gamma},\qquad
|\nabla^3\phi_\lse(\lambda)[u,v,w]|\le\frac{\Delta_A^3}{4\gamma^2}\|u\|_2\|v\|_2\|w\|_2,\\
|\nabla^3\phi_\lse(\lambda)[u,u,v]|\le\frac{\Delta_A}{\gamma}\bigl(u^\top\nabla^2\phi_\lse(\lambda)u\bigr)\|v\|_2 .
\end{gathered}
\end{equation}
So $\phi_\lse$ has $L_\lse\le\Delta_A^2/(4\gamma)$, is quasi-self-concordant with
$M_{\rm qsc}\le\Delta_A/\gamma$, and has $M_2\le\Delta_A^3/(4\gamma^2)$.
\prooflink{proof:lse-geometry}
\end{theorem}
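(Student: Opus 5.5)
The plan is to write every derivative of $\phi_\lse$ as a cumulant of a random column, and then bound each cumulant using only the diameter $\Delta_A$ of the column set $\{a_j\}$. Fix $\lambda$ and let $J$ be a random index with law $p(\lambda)$. For a direction $u$ set $X_u:=a_J^\top u$. Since $\gamma\log\sum_j\exp((a_j^\top\lambda-c_j)/\gamma)$ is $\gamma$ times a log-partition function in the natural parameter $\lambda/\gamma$, differentiating in $\lambda$ gives the cumulant identities
\[
\nabla^2\phi_\lse(\lambda)[u,u]=\tfrac1\gamma\Var(X_u),\qquad
\nabla^3\phi_\lse(\lambda)[u,v,w]=\tfrac1{\gamma^2}\E\bigl[(X_u-\E X_u)(X_v-\E X_v)(X_w-\E X_w)\bigr].
\]
These follow from the stated Hessian formula by differentiating once more. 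The linear term $-\ip{b}{\lambda}$ drops out, and every centered moment is invariant under shifting all $a_j$ by a common vector.

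For the Hessian bound, $X_u$ is supported on an interval of length $\max_{j,k}(a_j-a_k)^\top u\le\Delta_A\|u\|_2$. Popoviciu's inequality then gives $\Var(X_u)\le\Delta_A^2\|u\|^2/4$, which is the first claim.

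For the quasi-self-concordance bound, write $\tilde X=X-\E X$. Then
\[
|\E[\tilde X_u^2\tilde X_v]|\le\E[\tilde X_u^2]\,\sup|\tilde X_v|,
\]
and $|\tilde X_v|\le\Delta_A\|v\|$ because any point of the convex hull lies within distance $\Delta_A$ of every $a_j$. This yields $|\nabla^3[u,u,v]|\le\frac{\Delta_A}{\gamma}(u^\top\nabla^2 u)\|v\|$, which is exactly the third inequality.

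The full trilinear bound with constant $\Delta_A^3/4$ is the delicate step. Symmetric trilinear forms satisfy $\|T\|=\sup_{\|u\|=1}|T[u,u,u]|$ (Banach's theorem on symmetric multilinear forms), so it suffices to show $|\E\tilde X^3|\le\Delta^3/4$ for a scalar variable supported on an interval of length $\Delta$. Here $\Delta=\Delta_A\|u\|$.

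For this scalar bound I would shift the support to $[0,\Delta]$. With $\mu$ the mean, $(x-\mu)^2\le\max(\mu,\Delta-\mu)^2$, and more sharply the pointwise inequality $(x-\mu)^3\le(\Delta-\mu)(x-\mu)^2$ holds on the support. Taking expectations gives
\[
\E\tilde X^3\le(\Delta-\mu)\Var(X)\le(\Delta-\mu)\,\mu(\Delta-\mu),
\]
using the Bhatia--Davis bound $\Var(X)\le\mu(\Delta-\mu)$. The right-hand side is maximized over $\mu$ at $\mu=\Delta/3$, with value $4\Delta^3/27\le\Delta^3/4$. The symmetric argument bounds $-\E\tilde X^3$ by the same quantity.

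The remaining claims follow directly. $L_\lse$ and $M_{\rm qsc}$ are read off the bounds above, and $M_2$, the Lipschitz constant of the Hessian, follows from the trilinear bound by the mean value theorem.

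The main obstacle I expect is only bookkeeping: checking that $\gamma$ enters through the factors $1/\gamma$ and $1/\gamma^2$ as claimed, and confirming that Banach's theorem applies, since the third derivative tensor is symmetric.
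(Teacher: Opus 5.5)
Your proposal is correct. For the Hessian bound and the quasi-self-concordance bound it is the paper's argument: the variance of a variable supported on an interval of length $\Delta_A\|u\|_2$, and the pointwise bound $|\tilde X_v|\le\Delta_A\|v\|_2$ pulled out of $\E[\tilde X_u^2\tilde X_v]$.

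You diverge from the paper only in the trilinear bound, and the paper does not treat that step as delicate. For unit $u,v,w$ it bounds $|Y_w|\le\Delta_A$ pointwise. It then uses Cauchy--Schwarz together with the same variance bound to get $\E|Y_uY_v|\le(\Var(Y_u)\Var(Y_v))^{1/2}\le\Delta_A^2/4$, and extends to arbitrary arguments by homogeneity. That is three lines, with no reduction to the diagonal.

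Your route is also valid. It uses Banach's theorem on symmetric multilinear forms to pass to $\sup_{\|u\|=1}|T[u,u,u]|$, and then the scalar estimate $|\E\tilde X^3|\le\mu(\Delta-\mu)^2\le4\Delta^3/27$. The pointwise inequality $(x-\mu)^3\le(\Delta-\mu)(x-\mu)^2$ on $[0,\Delta]$, the Bhatia--Davis step and the reflected bound for $-\E\tilde X^3$ all check out.

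What your route buys is a sharper constant, $M_2\le4\Delta_A^3/(27\gamma^2)$ rather than $\Delta_A^3/(4\gamma^2)$. The price is importing a nontrivial polarization theorem. The paper's bound is self-contained and reuses the single bounded-support observation that drives the other two estimates.
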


\noindent Since $\Delta_A\le2\rho$ with $\rho=\max_j\|a_j\|_2$, the coarser column-norm constants
$L_\lse\le\rho^2/\gamma$, $M_{\rm qsc}\le2\rho/\gamma$ and $M_2\le2\rho^3/\gamma^2$ follow and are
what the exponent comparisons use. This is Nesterov smoothing \citep{nesterov2005smooth}. A
single non-accelerated LSE coordinate update can be sparse if the weights and their sum are
maintained incrementally; an accelerated one cannot, since the sum is needed at the extrapolated
point. The property that decides which dual a second-order method needs is the following.

\begin{proposition}[The SE dual is quasi-self-concordant]\label{prop:se-qsc}
For every $\lambda$ and all $u,v$,
\[
\bigl|\nabla^3\phi_\se(\lambda)[u,u,v]\bigr|
\le\frac{\max_j|a_j^\top v|}{\gamma}\,\nabla^2\phi_\se(\lambda)[u,u]
\le\frac{\rho\|v\|_2}{\gamma}\,\nabla^2\phi_\se(\lambda)[u,u],
\]
so $M_{\rm qsc}(\phi_\se)\le\rho/\gamma$, no worse than the column-norm bound $2\rho/\gamma$ for
$\phi_\lse$. \prooflink{proof:se-qsc}
\end{proposition}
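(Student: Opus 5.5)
We will obtain both inequalities by differentiating the closed-form expression for $\phi_\se$ directly. Put $x_j=x_j(\lambda)=\exp((a_j^\top\lambda-c_j)/\gamma)>0$. Then $\partial x_j/\partial\lambda=x_j a_j/\gamma$. The linear term $-\ip{b}{\lambda}$ does not contribute to second or higher derivatives. Differentiating the sum $\gamma\sum_j x_j$ repeatedly in the directions $u$ and $v$ therefore gives
\[
\nabla^2\phi_\se(\lambda)[u,u]=\frac1\gamma\sum_j x_j(a_j^\top u)^2,\qquad
\nabla^3\phi_\se(\lambda)[u,u,v]=\frac1{\gamma^2}\sum_j x_j(a_j^\top u)^2(a_j^\top v).
\]
Each factor $j$ contributes a nonnegative weight $x_j(a_j^\top u)^2$ to both forms. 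This is what separability buys over $\phi_\lse$: the log-sum-exp dual has covariance-type cross terms, the separable dual has none.

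The first inequality follows by bounding the single extra factor in the third-order sum. We use $|a_j^\top v|\le\max_k|a_k^\top v|$ term by term and the nonnegativity of the weights:
\[
\bigl|\nabla^3\phi_\se(\lambda)[u,u,v]\bigr|\le\frac{1}{\gamma^2}\sum_j x_j(a_j^\top u)^2|a_j^\top v|
\le\frac{\max_k|a_k^\top v|}{\gamma}\cdot\frac1\gamma\sum_j x_j(a_j^\top u)^2 .
\]
The last factor is exactly $\nabla^2\phi_\se(\lambda)[u,u]$.

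The second inequality is Cauchy--Schwarz, $|a_k^\top v|\le\|a_k\|_2\|v\|_2\le\rho\|v\|_2$, with $\rho=\max_j\|a_j\|_2$. By the definition of quasi-self-concordance this gives $M_{\rm qsc}(\phi_\se)\le\rho/\gamma$. We then compare with Theorem~\ref{thm:lse-geometry}, where $M_{\rm qsc}(\phi_\lse)\le\Delta_A/\gamma\le2\rho/\gamma$, to conclude that the SE constant is no worse.

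No step here is a real obstacle; the argument is a direct computation. The only care needed is to confirm that the linear term drops out of all higher derivatives, and that no sign assumption on $A$ is used. Absolute values enter only through the factor $|a_j^\top v|$, while the squared factor $(a_j^\top u)^2$ keeps every weight nonnegative.
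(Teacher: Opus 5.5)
Your proposal is correct and is essentially the paper's proof: write the second and third directional derivatives as sums over factors sharing the nonnegative weight $x_j(a_j^\top u)^2$, pull out $\max_j|a_j^\top v|$, and finish with Cauchy--Schwarz, using no sign assumption on $A$. The one thing to phrase carefully is the final comparison: it holds against the column-norm bound $2\rho/\gamma$, not against $\Delta_A/\gamma$ itself, since $\Delta_A$ can be much smaller than $\rho$ (e.g.\ when all columns are close to each other).
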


\noindent Gradient-regularized Newton therefore applies to $\phi_\se$ directly, with iterates
confined to $z+\operatorname{range}A$ where the SE sublevel sets are bounded
(Appendix~\ref{app:input-certificates}); matrix scaling is Doikov's own worked example of a
quasi-self-concordant objective of this form \citep{doikov2023qsc}. What $\phi_\se$
lacks is a global Lipschitz Hessian, so tensor and gradient-norm methods still
require $\phi_\lse$. Neither method we run on this dual is ours; Theorem~\ref{thm:lse-geometry}
contributes the moment representation and the constants that place the dual inside their theory.
GRN's shift $\sigma_k\|g_k\|_2I$ repairs the gauge degeneracy, so quasi-self-concordance alone
gives a rate on the whole sublevel set without a cubic subproblem
\citep{doikov2023qsc,doikov2024gradient}; accelerated alternating minimization
\citep{guminov2021combination} needs blocks with a closed form, which general sparse $A$ does not
give. Appendices~\ref{app:tensor} and~\ref{app:resource-comparison} carry the gauge
construction, Proposition~\ref{prop:grn-rate} and the comparisons at a common tolerance.

\paragraph{Cost as a function of the dual dimension.}
A coordinate step reads one row at $O(d_i)$, while a second-order step assembles the Hessian in
$O(\nnz(A)\omega_{\max}+m^2)$ and factorizes it in $O(m^3)$, with an iteration count almost
independent of accuracy and dimension; the same GRN runs on $\phi_\se$
(Proposition~\ref{prop:se-qsc}) at a fifth more iterations and a third more arithmetic. The fitted
exponents of Table~\ref{tab:crossover} and Figure~\ref{fig:crossover} give two crossings: the
second-order branch loses its arithmetic advantage between $m=100$ and $200$ but keeps its
wall-clock advantage until between $m=400$ and $1000$, because an $m^3$ factorization runs near peak
throughput while a coordinate sweep is a scattered gather over $\nnz(A)$. By Pinsker, a dual gap
$\eps=\gamma\eps_{\rm p}^2/(2\rho^2)$ gives $\E\|A\widehat x-b\|_2\le\eps_{\rm p}$, in expectation
(Appendix~\ref{app:cert-ot}).

\section{Experiments}\label{sec:exp-short}
The experiments separate acceleration together with curvature-weighted sampling (ACD against RCD
in coordinate epochs), sparse work (a density ablation at fixed $m$), and problem-specific structure
(generic methods against Sinkhorn). Baselines, instance generators, the arithmetic model and the reference optimum are in
Appendix~\ref{app:exp-full}; all curves report the best-so-far relative dual gap
$(\phi(\lambda)-\phi^\star)/|\phi_\se^\star|$ as medians over five runs. The plotted variant,
adaptive ACD, shrinks the conservative gap term using checks on the sampled coordinate, which is not
covered by Theorems~\ref{thm:phase-short}--\ref{thm:work-short}; the fixed-constant schedule is
measured separately.

\begin{table}[t]
\centering\small
\caption{Grouped by step type; GRN \citep{doikov2023qsc} runs on both duals
(Proposition~\ref{prop:se-qsc}). CRN \citep{nesterov2006cubic}, accelerated order-$2$ tensor
\citep{dvurechensky2024near}, APDAGD \citep{dvurechensky2018computational}, RCD
\citep{lobanov2024}. ``$\eps$-dep.'' is the accuracy exponent of the native guarantee (none for the adaptive variant); $\alpha$ is
fitted over $m\in\{100,\dots,1000\}$ at target $10^{-9}$, wall-clock (s) the median over five seeds. A
dash marks a target the median seed missed in budget, or too few points to fit.}
\label{tab:crossover}
\begin{tabular}{@{}llrrrrr@{}}
\toprule
& step & $\eps$-dep. & $\alpha$ (arith.) & $\alpha$ (wall) & \multicolumn{2}{c}{wall-clock} \\
\cmidrule(lr){6-7}
& & & & & $m{=}300$ & $m{=}1000$ \\
\midrule
GRN on $\phi_\lse$ & Newton     & $\log\tfrac1\eps$ & $2.92$ & $2.18$ & $0.016$ & $0.37$ \\
GRN on $\phi_\se$                               & Newton     & $\log\tfrac1\eps$ & $2.89$ & $2.22$ & $0.019$ & $0.43$ \\
CRN & cubic      & $\eps^{-1/2}$     & $3.03$ & $2.13$ & $0.087$ & $1.53$ \\
APDAGD & gradient   & $\eps^{-1/2}$     & $0.92$ & $0.69$ & $0.044$ & $0.16$ \\
Acc. tensor & tensor     & $\eps^{-1/3}$     & ---    & ---    & $0.43$  & --- \\
\addlinespace[2pt]
RCD & coordinate & $\eps^{-1}$       & $0.96$ & $0.94$ & $0.044$ & $0.14$ \\
RCD, exact coordinate step                      & coordinate & $\eps^{-1}$       & $0.96$ & $0.95$ & $0.070$ & $0.22$ \\
ACD, analysed schedule (ours)                   & coordinate & $\eps^{-1/2}$     & ---    & ---    & ---     & --- \\
ACD, adaptive (ours, heuristic)            & coordinate & none         & $0.94$ & $0.92$ & $0.037$ & $0.11$ \\
\bottomrule
\end{tabular}
\end{table}

\paragraph{Synthetic instances.}
On the main sparse instance ($m=300$, $n=1200$, $\gamma=0.05$, five seeds) the adaptive variant
reaches relative dual-gap levels $10^{-6},10^{-9},10^{-12}$ after $80$, $145$ and $200$ epochs.
Two baselines bracket it. Plain RCD, taking the envelope step its own analysis prescribes, needs
$360$, $620$ and $900$; with the same exact coordinate step our variants take, randomized iterative
Bregman projection \citep{benamou2015bregman}, it needs $185$, $320$ and $460$. Against this baseline, which matches the step rule but samples uniformly, the advantage is a factor $2.2$. These epoch counts are nearly flat in $m$ (Table~\ref{tab:crossover}). On
wall-clock at $10^{-9}$ it is $0.037$s against $0.044$s and $0.070$s: the exact step saves epochs at a
matching cost per epoch. We read the comparison on wall-clock rather than on the operation count,
because the per-nonzero charge of Appendix~\ref{app:exp-full} is not calibrated finely enough across
the two methods to resolve a margin of this size. GRN sits on the other side of the crossover
Section~\ref{sec:lse-short} locates: $15$ iterations and $0.016$s here, against $0.37$s at $m=1000$
where the coordinate methods need $0.11$s.

\begin{figure}[t]
\centering
\includegraphics[width=\linewidth]{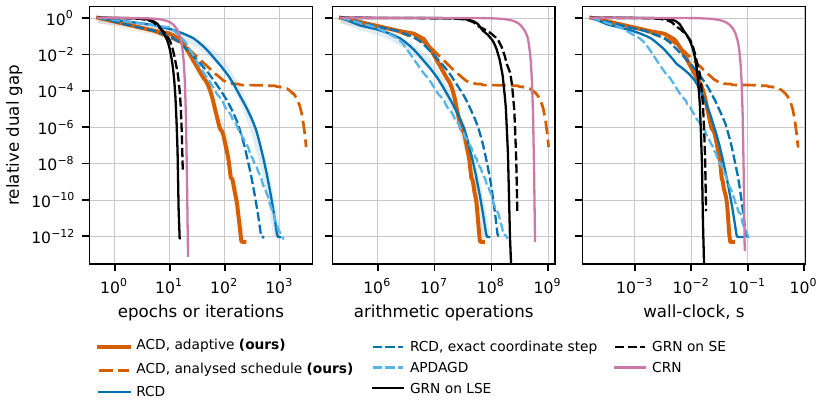}
\caption{Synthetic sparse entropy-regularized LP instance, $m=300$, on the three axes the
experiments separate; medians over five seeds, with min--max bands on the adaptive and RCD curves.
Two ACD curves are shown: the analysed schedule of Theorem~\ref{thm:phase-short} with its proved
constants (dashed) and the adaptive variant that rescales them (solid);
Table~\ref{tab:cert-sweep} measures the gap between them. Epoch counts are not work-comparable
across the coordinate methods (Appendix~\ref{app:exp-full}).}
\label{fig:synth-short}
\end{figure}

At fixed $m=1000$, as the row density varies over a factor of $36$ in $\nnz(A)$, the epochs to
$10^{-9}$ move only between $125$ and $250$ while the arithmetic grows from $2.9\cdot10^7$ to
$6.3\cdot10^8$, within a factor $1.8$ of proportional to $\nnz(A)$. That is what the $O(d_B)$ block
cost predicts: density changes the price of a coordinate step, not how many steps are useful
(Figure~\ref{fig:density-short}, Appendix~\ref{app:exp-full}).

\paragraph{The analysed schedule.}
Run with its proved constants and the fixed anchor of Theorem~\ref{thm:work-short}, the same method
reaches $10^{-6}$ on the $m=300$ instance in $2735$ epochs but not $10^{-9}$ within $3000$ epochs,
and with the envelope step not even $10^{-6}$. On the $m=1000$ instance of Table~\ref{tab:cert-sweep}
its termination test is met at none of the thirty settings. At the analysed scale the $10^{-12}$
tolerance is reached only at $r=0.5\|\lambda_0-\lambda^\star_\se\|_2$, below the true distance and so
outside Theorem~\ref{thm:phase-short}, where the iterate crosses $10^{-6}$ after $1720$ epochs
against $345$ for RCD and $175$ for its exact-step version; at admissible radii it still crosses
$10^{-6}$, in $205$ epochs at $r=20$, but the schedule cannot recognize the crossing.

Rescaling only at phase boundaries recovers most of the gap: with phases capped at $100$ epochs and
$\kappa$ rescaled once per boundary by the slack the phase measured, the constants are frozen within
each phase, so Lemma~\ref{lem:presampling} and Section~\ref{sec:lazy} hold verbatim. This variant
reaches $10^{-9}$ in $245$ epochs at $m=300$ and $260$ at $m=1000$, against $320$ for RCD with the
exact step, with no envelope violation; the truncation and the retrospective choice of $\kappa$
remain outside the theorems.

Instrumenting the sampled coordinates, the envelope was never violated over thirty settings and
five seeds, but it exceeds the curvature actually encountered by $13.6$ at $r=0.5$ and $2.9\cdot10^4$
at $r=20$: the inequality is sound, while the schedule built on it is not competitive at its analysed
scale (Section~\ref{sec:limits}).

\paragraph{Optimal transport.}
The DOTmark experiment \citep{schrieber2017dotmark} ($32\times32$ images, $m=2048$, $\gamma=0.01$)
is unfavorable to a generic method and the margin is large. Sinkhorn
\citep{cuturi2013sinkhorn} reaches $10^{-6}$ on all five pairs in $10$ to $60$ sweeps, the adaptive
variant on three (in $1906$, $2390$ and $2850$ epochs) and the analysed schedule and RCD on none;
where both finish the factor is $32$ to $71$ on epochs and $230$ to $494$ on wall-clock. GRN, CRN and
APDAGD finish on all five, so here the non-separable dual is the better route as well. The gap is
algorithmic rather than one of data touched: Sinkhorn minimizes exactly over a $1024$-dimensional
marginal block, and accelerated \citep{xu2026accsinkhorn} and greedy \citep{altschuler2017near}
variants are faster still. That half-step is exact block minimization, which our analysis admits; instantiated
there, the analysed schedule reaches $10^{-6}$ on three pairs in $45$, $130$ and $750$ block passes
and within a factor $3.5$ on the other two, yet remains $1.9$ to $22.9$ half-steps behind Sinkhorn,
consistent with $S_0$, which moves by $1.2$ to $3.8$, never in our favour
(Appendices~\ref{app:cert-ot} and~\ref{app:exp-full}).

\section{Discussion and limitations}\label{sec:limits}
On the separable side our analysis is much too loose to account for the win of the adaptive
variant, which carries no guarantee, and three things limit what has been shown.

\emph{Slack.} The envelope is three to four orders of magnitude loose at its analysed scale
(Section~\ref{sec:exp-short}), which is why the plotted variant rescales it. Its coefficient on the
\emph{global} gap is not improvable (Proposition~\ref{prop:nu-tight}); the remaining aggregate factor
$\sum_i\rho_i/(\sqrt m\,\rho)$, five to twelve here, is exactly the price of fixing the constants
before the draw (Proposition~\ref{prop:precommit}), so closing it needs a scheme that is not
pre-committed per row.

\emph{Inputs.} The radius $r$, the gap bound $\bar\Delta$ and the partition $\mathcal B$ are
supplied; too small an $r$ or $\bar\Delta$ voids Theorem~\ref{thm:phase-short} undetectably, and our
computable radius is loose, so the runs use an oracle radius. A verified-doubling
radius in the spirit of \citet{lobanov2026} and a budget-driven sampling law are the decisive open
items; for $\mathcal B$ we give no selection rule, and the only partition we test beyond singletons is
the marginal partition of transport.

\emph{Numerical range.} Unlike $\phi_\lse$, $\phi_\se$ is not invariant under a common logit
offset, so log-sum-exp recentering requires tracking the removed scale explicitly; our
implementation does not, which puts small-$\gamma$ transport outside double precision without
\citet{schmitzer2019stabilized}; our theorems assume exact arithmetic, and
the $\sqrt{s_{\rm col}}$ and polynomial separations compare upper bounds under different accounting, not
runtimes (Appendices~\ref{app:l1-target} and~\ref{app:limits-full}).
\label{body:lastcontent}

\subsection*{AI use statement}
Large language models were used in the following parts of this work. \emph{Writing:} editing
and polishing the text and restructuring the \LaTeX{} source. \emph{Mathematical claims:} deriving and checking proofs of the stated results. \emph{Research execution:} implementing the methods
and baselines, designing and running the experiments, preparing the figures, and assisting in the
interpretation of the results. \emph{Synthetic data:} writing the generators of the synthetic
instances described in Appendix~\ref{app:exp-full}; no data were generated by a model directly.
The authors reviewed and verified all statements, proofs, code, and reported results, and take full
responsibility for the content of the paper.

\subsection*{Reproducibility statement}
Every formal result is stated with its assumptions, and its complete proof is given in the
appendices, together with the sparse update and the LSE formulas used by the methods. We do not
include code with this submission; instead, Appendix~\ref{app:exp-full} specifies what is needed to
reimplement the experiments: the synthetic instance generator with its distributions and parameters,
the DOTmark classes, image pairs, cost normalization and regularization, the seeds, the common
starting point and the relative-gap metric, the reference solutions, the inputs supplied to each
method (including the oracle radius used by the analysed schedule and the rescaling rule of the
adaptive variant), and the arithmetic cost model. Wall-clock times were measured on a single laptop
with single-threaded BLAS and are indicative only. Where a reported number aggregates over seeds or
image pairs, the text states which aggregate it is.
\label{body:laststatement}

\clearpage
\bibliographystyle{plainnat}
\bibliography{references_iclr2027}

\appendix

\section{Dual derivations and the gap identities}\label{app:duals}

\begin{assumption}[Mass and feasibility]\label{ass:primal}
Some $q\in\R^m$ satisfies $A^\top q=\mathbf1$ and $\ip{q}{b}=1$, and the affine system admits a
strictly positive point: $A\bar x=b$ for some $\bar x>0$.
\end{assumption}

\noindent The first half forces $\mathbf1^\top x=1$ whenever $Ax=b$, so the feasible set is a
compact subset of the simplex; the second is what makes the minimizer positive in the Attainment
paragraph below. Throughout, the unnormalized and normalized responses are
\begin{equation}\label{eq:responses-short}
x_j(\lambda):=\exp\!\Bigl(\frac{a_j^\top\lambda-c_j}{\gamma}\Bigr),\qquad
Z(\lambda):=\sum_{j=1}^nx_j(\lambda),\qquad
p_j(\lambda):=\frac{x_j(\lambda)}{Z(\lambda)} .
\end{equation}
The SE dual follows by minimizing the Lagrangian over $x\ge0$ coordinatewise; the LSE dual follows
by retaining the simplex constraint and minimizing over it. Under $A^\top q=\mathbf1$ and
$\ip{q}{b}=1$,
\[
\phi_\se(\lambda+tq)=-\ip{b}{\lambda}-t+\gamma e^{t/\gamma}Z(\lambda),
\]
whose stationary point in $t$ proves Proposition~\ref{prop:dual-short}.

\paragraph{Attainment.}
The feasible set is a closed subset of the simplex, hence compact, and strict feasibility supplies
a point with all components positive. The entropy-regularized minimizer $x^\star$ is itself
positive: moving from a point with a zero component toward a strictly feasible point produces a
one-sided entropy derivative of $-\infty$ at that component, while the derivatives at positive
components and of the linear term are finite. The equality-constrained KKT conditions then give a
finite $\lambda^\star_\se$ with
\[
A^\top\lambda^\star_\se=c+\gamma\log x^\star,\qquad Ax^\star=b .
\]
Both dual optima are attained, though dual minimizers need not be unique.

\paragraph{The two gaps.}
Proposition~\ref{prop:dual-short} gives $\phi_\se^\star=\phi_\lse^\star+\gamma$, and subtracting
the two dual functions gives $\phi_\se-\phi_\lse=\gamma(Z-\log Z)$, which is \eqref{eq:gap-split}.
For \eqref{eq:lse-kl}, since $\mathbf1^\top x^\star=1$,
\[
\gamma\KL(x^\star\|p(\lambda))
=f_s(x^\star)-\ip{b}{\lambda}+\gamma\log Z(\lambda)
=f_s^\star+\phi_\lse(\lambda)=F_\lse(\lambda),
\]
where $f_s(x)=\ip{c}{x}+\gamma\sum_jx_j\log x_j$ is the shifted primal objective, equal to the
objective of \eqref{eq:primal-short} plus $\gamma$. The raw SE gap therefore carries a
mass-normalization error on top of the error of the normalized response.

\paragraph{Moments of the two duals.}
The same distinction organizes the derivatives. Let $X$ take the value $a_j$ with probability
$p_j(\lambda)$ and put $\mu=\E[X]=Ap(\lambda)$; then
\begin{equation}\label{eq:both-moments}
\nabla\phi_\se=Z\mu-b,\quad
\nabla^2\phi_\se=\frac{Z}{\gamma}\E[XX^\top],
\qquad
\nabla\phi_\lse=\mu-b,\quad
\nabla^2\phi_\lse=\frac1\gamma\Cov(X),
\end{equation}
an unnormalized second moment against a centered one. Everything that separates the two routes
follows from that one difference, as Table~\ref{tab:dual-comparison} summarizes.

\begin{table}[t]
\centering\small
\caption{Two exact dual representations of the same primal problem. The last two rows are what
Sections~\ref{sec:alg-short} and~\ref{sec:lse-short} spend the paper measuring.}
\label{tab:dual-comparison}
\begin{tabular}{@{}lll@{}}
\toprule
& Sum-exp (SE) & Log-sum-exp (LSE) \\
\midrule
Primal response   & unnormalized $x_j(\lambda)$   & normalized $p_j(\lambda)$ \\
Nonlinear term    & sum of exponentials           & log of a sum of exponentials \\
Hessian           & weighted second moment        & covariance \\
Coupling          & partially separable           & global normalization \\
Global curvature  & unbounded                     & bounded by the columns of $A$ \\
Natural update    & sparse randomized blocks      & regularized Newton steps \\
\bottomrule
\end{tabular}
\end{table}

\section{Proof of the Hessian--gap theorem}\label{app:hg-proof}
We first isolate the scalar inequality. For every $\theta>1$,
\begin{equation}\label{eq:scalar-entropy-app}
e^t\le\theta\,h(t)+c_\theta,\qquad h(t)=e^t-1-t,\qquad c_\theta=\theta\log\frac{\theta}{\theta-1}.
\end{equation}
Indeed $(1-\theta)e^t+\theta(1+t)$ is concave and attains its maximum at $e^t=\theta/(\theta-1)$,
where its value is $c_\theta$.

Let $\lambda^\star$ be an SE minimizer and put $t_j=a_j^\top(\lambda-\lambda^\star)/\gamma$.
Stationarity gives $x_j(\lambda)=x_j^\star e^{t_j}$ and $Ax^\star=b$, so expanding the dual
objective around $\lambda^\star$ yields the exact Bregman identity
\begin{equation}\label{eq:entropy-gap-app}
F(\lambda)=\gamma\sum_jx_j^\star h(t_j).
\end{equation}
For any $u\in\R^{|B|}$,
\begin{align*}
u^\top H_{BB}(\lambda)u
&=\frac1\gamma\sum_jx_j^\star e^{t_j}(A_{B,j}^\top u)^2\\
&\le\frac{c_\theta}{\gamma}\sum_jx_j^\star(A_{B,j}^\top u)^2
 +\frac{\theta}{\gamma}\sum_jx_j^\star h(t_j)(A_{B,j}^\top u)^2\\
&\le c_\theta\,u^\top H^\star_{BB}u+\frac{\theta\rho_B^2}{\gamma^2}F(\lambda)\|u\|_2^2,
\end{align*}
using \eqref{eq:scalar-entropy-app}, then $\|A_{B,j}\|_2\le\rho_B$ and
\eqref{eq:entropy-gap-app}. Since $u$ was arbitrary this proves \eqref{eq:hg-short}, and
$\theta=2$ gives $c_2=2\log2$.

For the universal bound $\bar H_B=\rho_B^2/\gamma$,
$\|H^\star_{BB}\|_2\le\gamma^{-1}\sum_jx_j^\star\|A_{B,j}\|_2^2\le\rho_B^2/\gamma$ because
$\sum_jx_j^\star=1$. For the nonnegative overlap term, Cauchy--Schwarz over the nonzeros of
each block column gives $(A_{B,j}^\top u)^2\le\omega_{Bj}\sum_{i\in B}A_{ij}^2u_i^2$, hence
\[
u^\top H^\star_{BB}u
\le\frac1\gamma\sum_{i\in B}u_i^2\sum_jx_j^\star\omega_{Bj}A_{ij}^2
\le\frac1\gamma\max_{i\in B}\Bigl[b_i\max_{j:A_{ij}\ne0}(\omega_{Bj}A_{ij})\Bigr]\|u\|_2^2,
\]
where we used $A\ge0$ and $\sum_jA_{ij}x_j^\star=b_i$. Both terms bound $\|H^\star_{BB}\|_2$, so their
minimum does, which is \eqref{eq:h-overlap-short}; the singleton formula follows from
$\omega_{\{i\}j}=1$, and the minimum is needed because for $|B|>1$ the overlap term can exceed
$\rho_B^2$ by up to $\omega_{\max}$.

\paragraph{Proof of Corollary~\ref{cor:range-moment}.}\label{proof:range-moment}
For $a\in[\alpha_i^-,\alpha_i^+]$ we have $(a-\alpha_i^-)(a-\alpha_i^+)\le0$, so
$a^2\le(\alpha_i^-+\alpha_i^+)a-\alpha_i^-\alpha_i^+$. Multiplying by $x_j^\star$ and summing at
$a=A_{ij}$ gives $\sum_jx_j^\star A_{ij}^2\le\mu_i$. If $\alpha_i^-<\alpha_i^+$, put
$t=(b_i-\alpha_i^-)/(\alpha_i^+-\alpha_i^-)\in[0,1]$; then
$\mu_i=(1-t)(\alpha_i^-)^2+t(\alpha_i^+)^2$, so
$0\le\mu_i\le\max\{(\alpha_i^-)^2,(\alpha_i^+)^2\}=\|A_{i:}\|_\infty^2$. For a constant row the
same follows from $b_i=\alpha_i^-=\alpha_i^+$ and $\mu_i=(\alpha_i^-)^2$. If $\alpha_i^-=0$ the
numerator is $\alpha_i^+b_i$.

\subsection{Optimality of the growth coefficient}\label{app:nu-tight}
Theorem~\ref{thm:hg-short} bounds a curvature that grows without limit by a constant
$\nu_B=\theta\rho_B^2/\gamma^2$ built from a single entry of the data, which invites the
suspicion that the step $A_{B,j}^\top u\le\rho_B$ is wasteful. It is not.

\begin{proposition}[$\nu_B$ is optimal up to $\theta$]\label{prop:nu-tight}
Let $\lambda^\star$ be an SE minimizer and let $B$ satisfy $A_{B,:}\ne0$. If some positive
semidefinite $\Psi_B$ and some $\nu_B\ge0$ satisfy
\begin{equation}\label{eq:nu-tight-hyp}
\nabla^2_{BB}\phi_\se(\lambda)\preceq\Psi_B+\nu_BF(\lambda)I_B
\qquad\text{for every }\lambda\in\R^m,
\end{equation}
then $\nu_B\ge\rho_B^2/\gamma^2$. Moreover, $\theta=1$ cannot be reached in general: for a
scaled indicator row, $\nu_i=\rho_i^2/\gamma^2$ admits no finite $\kappa_i$.
\end{proposition}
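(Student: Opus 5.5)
The plan is to test \eqref{eq:nu-tight-hyp} along a ray on which one factor dominates both the Hessian and the gap. Pick a column $j_0$ with $\|A_{B,j_0}\|_2=\rho_B>0$, which exists since $A_{B,:}\ne0$. Set $u_0=A_{B,j_0}/\rho_B$ and take the direction $d=U_BA_{B,j_0}$, so that $a_{j_0}^\top d=\rho_B^2>0$. Along $\lambda_s=\lambda^\star+s d$ we have $t_j=s\,a_j^\top d/\gamma$. Using the Bregman identity \eqref{eq:entropy-gap-app} and the Hessian form from the proof of Theorem~\ref{thm:hg-short}, the test value is
\[
u_0^\top H_{BB}(\lambda_s)u_0=\frac1\gamma\sum_jx_j^\star e^{t_j}(A_{B,j}^\top u_0)^2,
\qquad
F(\lambda_s)=\gamma\sum_jx_j^\star h(t_j).
\]

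The key step is to compare the growth rates as $s\to\infty$. Let $\tau=\max_j a_j^\top d/\gamma$. This is positive, since $a_{j_0}^\top d/\gamma=\rho_B^2/\gamma>0$. Let $J$ be the set of maximizing columns. Then $h(t_j)=e^{t_j}+O(s)$ for $j\in J$, and every other term is $o(e^{s\tau})$. Hence $F(\lambda_s)\sim\gamma e^{s\tau}\sum_{j\in J}x_j^\star$. For the Hessian term I will use Cauchy--Schwarz: for any $j$,
\[
a_j^\top d=A_{B,j}^\top A_{B,j_0}\le\|A_{B,j}\|_2\,\rho_B\le\rho_B^2 .
\]
So $\tau=\rho_B^2/\gamma$ and $j_0\in J$. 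For $j\in J$, equality in Cauchy--Schwarz forces $A_{B,j}=A_{B,j_0}$, so $(A_{B,j}^\top u_0)^2=\rho_B^2$. Therefore $u_0^\top H_{BB}u_0\sim\gamma^{-1}\rho_B^2e^{s\tau}\sum_{j\in J}x_j^\star$. Dividing \eqref{eq:nu-tight-hyp}, tested at $u_0$, by $F(\lambda_s)\to\infty$ gives
\[
\frac{\rho_B^2}{\gamma^2}\le\nu_B+\frac{u_0^\top\Psi_Bu_0}{F(\lambda_s)}\to\nu_B .
\]
Here I use $x^\star>0$ from the Attainment paragraph, so the leading coefficients are nonzero. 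This asymptotic bookkeeping, in particular showing that the maximizing set $J$ contributes identical factors to both sides, is the step that needs care. The Cauchy--Schwarz choice of direction is what makes it clean.

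For the second claim, take a scaled indicator row $i$ with $A_{ij}\in\{0,\rho_i\}$ and $\nu_i=\rho_i^2/\gamma^2$. Along $\lambda_s=\lambda^\star+se_i$ we get $t_j=s\rho_i/\gamma$ on the support $S_i$ and $t_j=0$ elsewhere. This gives
\[
H_{ii}=\gamma^{-1}\rho_i^2e^{t}\,m_i,\qquad F=\gamma\,m_i\,h(t),\qquad m_i=\sum_{j\in S_i}x_j^\star>0 .
\]
So $H_{ii}-\nu_iF=\gamma^{-1}\rho_i^2m_i\,(e^t-h(t))=\gamma^{-1}\rho_i^2m_i(1+t)$, which is unbounded as $t\to\infty$. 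Hence no finite $\kappa_i$ can work.
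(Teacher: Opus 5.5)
Your proof is correct and follows the paper's argument. Both use the same ray $\lambda^\star+s\,U_BA_{B,j_0}$ (the paper's $\lambda^\star+\gamma\tau U_Bu$ after reparametrization) and the same maximizing set $\{j:A_{B,j}^\top A_{B,j_0}=\rho_B^2\}$. Both then compare leading orders; the paper multiplies by $e^{-\tau\rho_B}$ where you divide by $F$. For the indicator row both compute $H_{ii}-\nu_iF\propto1+t$.

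Two cosmetic points. The left side of your final display should be $u_0^\top H_{BB}(\lambda_s)u_0/F(\lambda_s)$, whose limit is $\rho_B^2/\gamma^2$. And the equality case of Cauchy--Schwarz is unnecessary, since $A_{B,j}^\top u_0=\rho_B$ holds on $J$ by definition.
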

\noindent The second statement is proved only for indicator rows, and we do not characterize
which rows force $\theta>1$; the first statement holds for every block with $A_{B,:}\ne0$.
\begin{proof}
Choose $j_0\in\arg\max_j\|A_{B,j}\|_2$ and set $u:=A_{B,j_0}/\rho_B$, a unit vector. Along
$\lambda_\tau:=\lambda^\star+\gamma\tau U_Bu$ we have $t_j(\tau)=\tau\beta_j$ with
$\beta_j:=A_{B,j}^\top u$, so Cauchy--Schwarz gives $\beta_j\le\|A_{B,j}\|_2\le\rho_B$ while
$\beta_{j_0}=\rho_B$; thus $\max_j\beta_j=\rho_B$ exactly. Put
$J^+:=\{j:\beta_j=\rho_B\}\ni j_0$ and $x^+:=\sum_{J^+}x_j^\star\ge x^\star_{j_0}>0$.
Contracting \eqref{eq:nu-tight-hyp} with $u$,
\[
\frac1\gamma\sum_jx_j^\star e^{\tau\beta_j}\beta_j^2
\le u^\top\Psi_Bu+\nu_B\,\gamma\sum_jx_j^\star h(\tau\beta_j).
\]
Multiply by $e^{-\tau\rho_B}$ and let $\tau\to+\infty$. Every $\beta_j\le\rho_B$ and the sums are
finite, so the left side tends to $x^+\rho_B^2/\gamma$, while $u^\top\Psi_Bu\,e^{-\tau\rho_B}\to0$
and $e^{-\tau\rho_B}\sum_jx_j^\star h(\tau\beta_j)\to x^+$, the $-1-\tau\beta_j$ parts vanishing.
Hence $x^+\rho_B^2/\gamma\le\nu_B\gamma x^+$, which is the claim.

For the second statement take $A_{ij}\in\{0,\rho_i\}$, so that $b_i=\rho_ix^+_i$ with
$x^+_i=\sum_{A_{ij}=\rho_i}x^\star_j$. Along $\lambda_\tau=\lambda^\star+\gamma\tau e_i$ one has
$t_j=\tau A_{ij}$, hence $\partial^2_{ii}\phi_\se=(\rho_i^2x^+_i/\gamma)e^{\tau\rho_i}$ and
$F=\gamma x^+_i(e^{\tau\rho_i}-1-\tau\rho_i)$, so
\[
\partial^2_{ii}\phi_\se(\lambda_\tau)-\frac{\rho_i^2}{\gamma^2}F(\lambda_\tau)
=\frac{\rho_ib_i}{\gamma}(1+\tau\rho_i)\xrightarrow[\tau\to\infty]{}+\infty .
\]
\end{proof}

So the affine envelope of Theorem~\ref{thm:hg-short} is optimal in its class up to the factor
$\theta$, and the exact envelope
$\sup\{\partial^2_{ii}\phi_\se:F\le\Delta\}=(\rho_i^2/\gamma^2)\Delta+O(\log\Delta)$ is affine only
asymptotically, the correction being $H^\star_{ii}\log\Delta+O(1)$ for scaled indicator rows but
$O(1)$ or even negative for signed ones; $\theta$ is the price of the affineness that
Algorithm~\ref{alg:main-short} needs in order to prebuild its sampling tables. One consequence
for Section~\ref{sec:geometry}: $d_A=\sum_i\rho_i^2/\rho^2$ is the far-field dimension of the
\emph{per-row} envelopes. Proposition~\ref{prop:nu-tight} does not make the aggregate
$\sum_i\nu_i$ unimprovable, and Appendix~\ref{app:local-gap} shows it is loose by
$\sum_i\rho_i/(\sqrt m\,\rho)$.

\subsection{Localized gaps and the aggregate slack}\label{app:local-gap}
Proposition~\ref{prop:nu-tight} closes off any \emph{per-row} improvement. The aggregate is a different matter, and sparsity is
the reason: the curvature of row $i$ involves only the columns in $\supp(A_{i:})$, whereas $F$ sums
over all $n$ of them.

\begin{proposition}[Localized gap and its budget]\label{prop:local-gap}
For each $i$ with $\rho_i>0$ define the \emph{localized gap}
\begin{equation}\label{eq:local-gap-def}
F_i(\lambda):=\frac{\gamma}{\rho_i^2}\sum_jx_j^\star h(t_j)A_{ij}^2,
\qquad\text{so}\qquad 0\le F_i(\lambda)\le F(\lambda).
\end{equation}
Then for every $\theta>1$ and every $\lambda$,
\begin{equation}\label{eq:local-gap-bound}
\partial^2_{ii}\phi_\se(\lambda)\le c_\theta H^\star_{ii}+\frac{\theta\rho_i^2}{\gamma^2}F_i(\lambda),
\end{equation}
and the localized gaps obey the budget identity, followed by the bound it implies,
\begin{equation}\label{eq:local-gap-budget}
\sum_i\rho_i^2F_i(\lambda)=\gamma\sum_jx_j^\star h(t_j)\|a_j\|_2^2\le\rho^2F(\lambda).
\end{equation}
\end{proposition}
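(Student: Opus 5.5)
The proof reuses the machinery from the proof of Theorem~\ref{thm:hg-short}, specialized to a singleton block $B=\{i\}$, without collapsing the gap weights into the global gap. Fix an SE minimizer $\lambda^\star$ and put $t_j=a_j^\top(\lambda-\lambda^\star)/\gamma$. Stationarity gives $x_j(\lambda)=x_j^\star e^{t_j}$, so
\[
\partial^2_{ii}\phi_\se(\lambda)=\frac1\gamma\sum_jx_j^\star e^{t_j}A_{ij}^2 .
\]
The scalar inequality \eqref{eq:scalar-entropy-app}, $e^{t}\le\theta h(t)+c_\theta$, is applied term by term. Since the weights $x_j^\star A_{ij}^2$ are nonnegative regardless of the sign of $A_{ij}$, this yields
\[
\partial^2_{ii}\phi_\se(\lambda)\le\frac{c_\theta}{\gamma}\sum_jx_j^\star A_{ij}^2+\frac{\theta}{\gamma}\sum_jx_j^\star h(t_j)A_{ij}^2 .
\]
The first sum is exactly $c_\theta H^\star_{ii}$. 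The second equals $\theta\rho_i^2F_i(\lambda)/\gamma^2$ by the definition \eqref{eq:local-gap-def}, because $\frac{\theta}{\gamma}\cdot\frac{\rho_i^2}{\gamma}F_i=\frac{\theta\rho_i^2}{\gamma^2}F_i$. This gives \eqref{eq:local-gap-bound}. The only difference from Theorem~\ref{thm:hg-short} is that we stop before the step $A_{ij}^2\le\rho_i^2$.

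The sandwich $0\le F_i\le F$ follows from $h\ge0$ and the pointwise bound $A_{ij}^2\le\rho_i^2$, where $\rho_i$ is the row's largest column entry in absolute value, i.e.\ $\rho_B$ for $B=\{i\}$. These give
\[
F_i\le\frac{\gamma}{\rho_i^2}\rho_i^2\sum_jx_j^\star h(t_j)=F(\lambda),
\]
using the exact Bregman identity \eqref{eq:entropy-gap-app}.

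For the budget identity, multiply \eqref{eq:local-gap-def} by $\rho_i^2$ and sum over the rows with $\rho_i>0$. Rows with $\rho_i=0$ have $A_{ij}=0$ for all $j$ and contribute nothing to $\|a_j\|_2^2$, so the sum may be taken over all $i$. Exchanging the finite sums gives
\[
\sum_i\rho_i^2F_i=\gamma\sum_jx_j^\star h(t_j)\sum_iA_{ij}^2=\gamma\sum_jx_j^\star h(t_j)\|a_j\|_2^2 .
\]
Finally, $\|a_j\|_2\le\rho$ together with \eqref{eq:entropy-gap-app} bounds this by $\rho^2F(\lambda)$.

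There is no real obstacle here. The only points needing care are that the argument uses $A_{ij}^2$ rather than $A_{ij}$, so no sign assumption enters, and that rows with $\rho_i=0$ are excluded from the definition but harmlessly included in the budget sum.
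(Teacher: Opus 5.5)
Your proof is correct and follows the paper's own argument: you rerun the singleton case of Theorem~\ref{thm:hg-short} and stop before the step $A_{ij}^2\le\rho_i^2$, and you get the budget identity by exchanging the sums and then using $\|a_j\|_2\le\rho$ together with the Bregman identity for $F$. You also spell out the sandwich $0\le F_i\le F$ and the handling of rows with $\rho_i=0$, both of which the paper leaves implicit.
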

\begin{proof}
For \eqref{eq:local-gap-bound}, repeat the proof of Theorem~\ref{thm:hg-short} for the singleton
block $\{i\}$ and stop before replacing $A_{ij}^2$ by $\rho_i^2$; the surviving sum is exactly
$\rho_i^2F_i/\gamma$. For \eqref{eq:local-gap-budget}, exchange the two sums:
$\sum_i\rho_i^2F_i=\gamma\sum_jx_j^\star h(t_j)\sum_iA_{ij}^2$, and
$\sum_iA_{ij}^2=\|a_j\|_2^2\le\rho^2$, while $\gamma\sum_jx^\star_jh(t_j)=F$.
\end{proof}

The consequence for the quantity the algorithm is actually charged follows from a
Cauchy--Schwarz step over \eqref{eq:local-gap-budget}: with $\nu_i=\theta\rho_i^2/\gamma^2$,
\begin{equation}\label{eq:aggregate-growth}
\sum_i\sqrt{\nu_iF_i(\lambda)}
=\frac{\sqrt\theta}{\gamma}\sum_i\rho_i\sqrt{F_i}
\le\frac{\sqrt{\theta m}}{\gamma}\,\rho\sqrt{F},
\qquad\text{versus}\qquad
\sum_i\sqrt{\nu_iF}=\frac{\sqrt\theta}{\gamma}\sqrt F\sum_i\rho_i .
\end{equation}
The ratio is $\sqrt m\,\rho/\sum_i\rho_i$. Here $\rho$ is a column $2$-norm and $\rho_i$ a
row $\infty$-norm, so with comparable rows this is $\sqrt{s_{\rm col}/m}$ rather than $1/\sqrt m$, where
$s_{\rm col}$ is the column sparsity. On the synthetic family of Appendix~\ref{app:exp-full} ($s_{\rm col}=19$) it is
$0.196$ at $m=300$, $0.127$ at $m=1000$ and $0.083$ at $m=2000$ -- medians over the five seeds of the
generator, the spread at $m=1000$ being $0.106$ to $0.129$ -- a factor of five to twelve in the
growth channel. The prediction $\sqrt{s/m}$ gives $0.25$, $0.14$ and $0.098$, which the measured
values undershoot by $22$, $8$ and $15$ percent: the same order, not a fixed offset. This is the
channel that dominates the early phases.

\subsection{The shared-budget form and the cost of pre-commitment}\label{app:budget}
Proposition~\ref{prop:local-gap} is best read as a restatement of the BGG condition in which the
rows share one growth budget instead of each owning a coefficient. Writing
$w_i:=\nu_iF_i(\lambda)$ and $\nu_\bullet:=\theta\rho^2/\gamma^2$, \eqref{eq:local-gap-bound} and
\eqref{eq:local-gap-budget} say exactly
\begin{equation}\label{eq:shared-budget}
\partial^2_{ii}\phi_\se(\lambda)\le\kappa_i+w_i,
\qquad 0\le w_i\le\nu_iF(\lambda),
\qquad \sum_iw_i\le\nu_\bullet F(\lambda),
\end{equation}
the per-row cap being $F_i\le F$, which holds because $A_{ij}^2\le\rho_i^2$;
and $\sum_i\nu_i=\nu_\bullet d_A$, so the shared budget is smaller than the sum of the individual
coefficients by exactly the far-field effective dimension. One consequence is immediate and free:
combining \eqref{eq:shared-budget} with the trace form \eqref{eq:global-hess-trace} gives
\begin{equation}\label{eq:global-hess-budget}
\|\nabla^2\phi_\se(\lambda)\|_2\le\sum_i\kappa_i+\nu_\bullet F(\lambda),
\end{equation}
which on the synthetic family of Appendix~\ref{app:exp-full} is a factor $7.8\cdot10^3$ below
$C(\Delta)^2:=(\sum_B\sqrt{\kappa_B+\nu_B\Delta})^2$, the quantity the sampling normalization
forces on the schedule (Appendix~\ref{app:acc-proof}), at $m=300$ and the $k=0$ gap level, of which
$26.6$ comes from the shared budget
(close to $d_A=26.9$, which it approaches as $\Delta$ grows) and the remaining $292$ from
\eqref{eq:global-hess-trace}.

The rate, however, does not follow, and the reason is worth stating as a limitation of the design
rather than of the analysis.

\begin{proposition}[Weight admissible under the cancellation identity]\label{prop:precommit}
Fix a gap level $\Delta$ and write $B_\star=\nu_\bullet\Delta$ for the growth budget in
\eqref{eq:shared-budget} (the symbol $W$ is reserved for arithmetic work). Consider any scheme of
the form of Algorithm~\ref{alg:main-short} whose sampling law $p$ and weight $a_{k+1}$ are fixed
before the block is drawn and which satisfies the cancellation
$p_i\ge a_{k+1}\sqrt{L_i/T_{k+1}}$ for \emph{every} profile admissible under
\eqref{eq:shared-budget}, that is every $w$ with $0\le w_i\le\nu_i\Delta$ and
$\sum_iw_i\le B_\star$. Then
\begin{equation}\label{eq:precommit-bound}
a_{k+1}\le\frac{\sqrt{T_{k+1}}}{\sum_i\sqrt{\kappa_i+\nu_i\Delta}},
\end{equation}
while a profile-aware normalization may use $\max_w\sum_i\sqrt{\kappa_i+w_i}$ over the same set.
The price of pre-commitment, the ratio of the two, satisfies
\begin{equation}\label{eq:precommit-price}
\frac{\sum_i\sqrt{\kappa_i+\nu_i\Delta}}{\sqrt{m\bigl(\sum_i\kappa_i+B_\star\bigr)}}
\ \le\ \text{price}\ \le\ \sqrt{d_A},
\end{equation}
with equality on the left whenever the capped water-filling level is feasible. In the
growth-dominated regime the left-hand side is $\sum_i\rho_i/(\sqrt m\,\rho)$, the reciprocal of the
aggregate ratio of Proposition~\ref{prop:local-gap}.
\end{proposition}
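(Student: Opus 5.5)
The plan is to prove the three claims in turn: the bound \eqref{eq:precommit-bound}, the upper bound on the price, and the lower bound on the price together with its equality case.

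\emph{The bound on $a_{k+1}$.} The cancellation $p_i\ge a_{k+1}\sqrt{L_i/T_{k+1}}$ must hold for every admissible profile, and $p$ is fixed before the profile is known. So for each $i$ I take the worst profile for that row alone, $w=\nu_i\Delta\,e_i$. It is admissible because $\nu_i\Delta\le\nu_\bullet\Delta=B_\star$, which follows from $\rho_i\le\rho$. This forces $p_i\ge a_{k+1}\sqrt{(\kappa_i+\nu_i\Delta)/T_{k+1}}$. Summing over $i$ and using $\sum_ip_i=1$ gives \eqref{eq:precommit-bound}. A profile-aware scheme needs the cancellation only for the realized profile $w$, so the same summation gives the normalization $\sum_i\sqrt{\kappa_i+w_i}$, and the worst case over the admissible set is the maximum stated.

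\emph{Upper bound $\sqrt{d_A}$.} First I would write down the price:
\[
\text{price}=\frac{\sum_i\sqrt{\kappa_i+\nu_i\Delta}}{\max_w\sum_i\sqrt{\kappa_i+w_i}} .
\]
Subadditivity of the square root bounds the numerator by $\sum_i\sqrt{\kappa_i}+\sqrt\Delta\sum_i\sqrt{\nu_i}$. For the denominator I need a lower bound, and I would take the feasible profile $w_i=\nu_i\Delta/d_A$. It is admissible since $\sum_i\nu_i=\nu_\bullet d_A$ and $d_A\ge1$. By concavity it gives
\[
\sum_i\sqrt{\kappa_i+w_i}\ \ge\ \frac{1}{\sqrt2}\Bigl(\sum_i\sqrt{\kappa_i}+\sqrt{\Delta/d_A}\sum_i\sqrt{\nu_i}\Bigr).
\]
This leaves an extra factor $\sqrt2$. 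To get exactly $\sqrt{d_A}$ I would instead compare termwise: $\sqrt{\kappa_i+\nu_i\Delta}\le\sqrt{d_A}\sqrt{\kappa_i+\nu_i\Delta/d_A}$, which holds because $d_A\ge1$. Then I sum over $i$.

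\emph{Lower bound and equality case.} By Cauchy--Schwarz, every admissible $w$ satisfies $\sum_i\sqrt{\kappa_i+w_i}\le\sqrt{m\sum_i(\kappa_i+w_i)}\le\sqrt{m(\sum_i\kappa_i+B_\star)}$. Dividing the numerator by this gives the left inequality. Equality in Cauchy--Schwarz needs $\kappa_i+w_i$ constant in $i$, that is, the water-filling profile $w_i=\ell-\kappa_i$ with $\sum_iw_i=B_\star$. This is exactly the case in which the capped water-filling level is feasible, meaning $0\le\ell-\kappa_i\le\nu_i\Delta$ for all $i$. In the growth-dominated regime ($\kappa_i\ll\nu_i\Delta$), substituting $\nu_i=\theta\rho_i^2/\gamma^2$ and $B_\star=\theta\rho^2\Delta/\gamma^2$ gives the ratio $\sum_i\rho_i/(\sqrt m\,\rho)$. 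This matches the reciprocal of the ratio in \eqref{eq:aggregate-growth}.

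\emph{Main obstacle.} The step I expect to be delicate is the interpretation of ``price''. The displayed lower bound compares the pre-committed numerator against the Cauchy--Schwarz upper bound on the profile-aware maximum, not against the maximum itself. I must check that the inequality direction in \eqref{eq:precommit-price} is consistent with this reading, and state the equality condition precisely.
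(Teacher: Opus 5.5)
Your proposal is correct and follows the paper's proof. You sum the cancellation over rows at the worst per-row profile, then bound the profile-aware maximum from below by the feasible profile $w_i=\nu_i\Delta/d_A$ (termwise, using $d_A\ge1$) and from above by Cauchy--Schwarz. Your per-row profiles $\nu_i\Delta e_i$, admissible because $\rho_i\le\rho$, are the rigorous form of the paper's ``$w_i=\nu_i\Delta$ for every $i$''; taken literally as one profile, that spends $d_AB_\star$ and is inadmissible unless $d_A=1$. You also verify the equality case, which the paper only asserts.

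One point to tighten is the growth-dominated remark. The denominator there also needs $\sum_i\kappa_i\ll B_\star$, and termwise $\kappa_i\ll\nu_i\Delta$ does not give this, since $\sum_i\nu_i\Delta=d_AB_\star$.
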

\begin{proof}
Summing $p_i\ge a_{k+1}\sqrt{L_i/T_{k+1}}$ over $i$ with $\sum_ip_i=1$ gives
$a_{k+1}\le\sqrt{T_{k+1}}/\sum_i\sqrt{L_i}$. A pre-committed scheme must respect the worst
admissible profile; under the cap that profile is $w_i=\nu_i\Delta$ for every $i$, which gives
\eqref{eq:precommit-bound}. For the upper bound in \eqref{eq:precommit-price}, Cauchy--Schwarz over
the capped set gives $\max_w\sum_i\sqrt{\kappa_i+w_i}\ge\sum_i\sqrt{\kappa_i+\nu_i\Delta/d_A}$,
obtained by the feasible choice $w_i=\nu_i\Delta/d_A$, and the ratio is then at most $\sqrt{d_A}$;
for the lower bound, $\max_w\sum_i\sqrt{\kappa_i+w_i}\le\sqrt{m(\sum_i\kappa_i+B_\star)}$ by
Cauchy--Schwarz, and this sits in the denominator.
\end{proof}

Measured on the synthetic family at the level $B_\star/\sum_i\kappa_i=85$, medians over five seeds,
the price of \eqref{eq:precommit-price} is $5.1$ at $m=300$, $7.9$ at $m=1000$ and $11.9$ at
$m=2000$, against $\sqrt{d_A}=5.2$, $8.1$ and $12.2$: the capped water-filling is feasible here, so
the lower bound is attained. Dropping the cap would instead give $17.2$, $31.4$ and $44.5$, close to
$\sqrt m$; that profile assigns the whole budget to one row, which requires $\rho_i=\rho$ and is not
realizable when the columns are sparse.

The slack identified above is therefore not recoverable by a cleverer choice of per-row constants,
and the price is exactly the aggregate slack of Proposition~\ref{prop:local-gap} read backwards:
what pre-commitment costs is what the localized gap would have saved. Nor have we found a scheme that escapes the price. Committing to the aggregate and
verifying per-row admissibility after the draw is the natural candidate, but the deficit incurred
on a violating row is weighted by $1/\sqrt{\kappa_i+w_i}$, so an adversarial allocation targets
exactly the under-provisioned rows, and the potential of Appendix~\ref{app:acc-proof} carries no
slack to absorb it.

The price is not constant: it tends to $1$ as $mB_\star/\sum_i\kappa_i\to0$, that is, once
the baseline curvature dominates the growth channel \emph{per row}, a threshold a factor $m$ below
the one a careless reading suggests. We tested whether this regime is reachable by raising $\gamma$
at $m=300$, and it is not: $B_\star/\sum_i\kappa_i$ falls only from $85$ to $5.6$ as $\gamma$ goes
from $0.05$ to $1$, still three orders of magnitude above the $1/m$ that would be needed; the
predicted price moves only from $5.1$ to $4.8$ against $\sqrt{d_A}=5.2$, and the measured ratio of
epochs to RCD \emph{widens} from $5.0$ to $9.7$, because the non-accelerated method benefits more
from the easier problem. We therefore do not claim a regime in which the pre-committed schedule is competitive. On the
instances we tested the growth channel dominates $M_k$ at the gap levels the schedule evaluates,
which is also why rescaling $\kappa$ gains less than one power of the rescaling factor: the
$\kappa$-sweep of Table~\ref{tab:cert-sweep-full} moves by roughly $\kappa^{1/4}$ per decade, not
$\kappa^{1/2}$, because $C_1$ floors the schedule and is untouched by that rescaling.

\section{Accelerated phase and work proof}\label{app:acc-proof}

\paragraph{Sampling law versus step length.}
Line~10 asks only for a point no worse than the explicit step, not for the explicit point itself:
$\bar L_{k,B}$ must be an \emph{a priori} bound where it sets the sampling law and the weight
$a_{k+1}$, but where it sets the step length any such point will do, so exact block minimization is covered and
an inexact iterate is admissible once compared with the explicit step. The restriction
$u_{k+1}\in y_k+\operatorname{range}U_{B_k}$ keeps \eqref{eq:lazy-updates} sparse, which is why
Theorem~\ref{thm:work-short} needs it and not only the rate argument.

Throughout, $F=f-f^\star$ and $0\preceq\nabla^2_{BB}f(u)\preceq(\kappa_B+\nu_BF(u))I_B$.

\paragraph{The degenerate case $C_0=0$.}
If $C_0=0$ then every $\kappa_B=0$. For any PSD $H$ and any block vector $d$,
\begin{equation}\label{eq:block-cs-app}
d^\top Hd\le\Bigl(\sum_B\|d_B\|_2\sqrt{\|H_{BB}\|_2}\Bigr)^2,
\end{equation}
by factoring $H=V^\top V$ and applying the triangle inequality to $Vd=\sum_BVU_Bd_B$. Hence
$\|\nabla^2f(u)\|_2\le C_1^2F(u)$. Along any line, $\psi(t):=F(u^\star+td)$ satisfies
$\psi(0)=\psi'(0)=0$ and $\psi''\le C_1^2\|d\|_2^2\psi$, and the integral form of Gronwall's
inequality forces $\psi\equiv0$. The attained problem is therefore constant, which is why the
theorem assumes $C_0>0$.

If a single block has $\kappa_B=\nu_B=0$, its diagonal Hessian block vanishes everywhere; for a
PSD matrix a zero diagonal block forces zero cross blocks, so $\nabla_Bf$ is constant and vanishes
at a minimizer. Dropping that block is exact. The gradient bound below also covers
$\kappa_B+\nu_BF(u)=0$ by applying the descent argument with $M>0$ and letting $M\downarrow0$.

\paragraph{Self-validating block descent.}
Fix $u,B$ and let $M\ge\kappa_B+\nu_BF(u)>0$. Put $g_B=\nabla_Bf(u)$ and, for $g_B\ne0$, move along
$d=-U_Bg_B/\|g_B\|$. As long as the objective has not exceeded $f(u)$, the BGG bound gives
$d^2f(u+\tau d)/d\tau^2\le M$, and a first return to $f(u)$ before $\tau=\|g_B\|/M$ would
contradict the resulting quadratic upper model. Evaluating at that endpoint,
\begin{equation}\label{eq:block-descent-app}
f\Bigl(u-\frac{U_Bg_B}{M}\Bigr)\le f(u)-\frac{\|g_B\|_2^2}{2M},
\end{equation}
and comparing with $f^\star$,
\begin{equation}\label{eq:block-gradient-app}
\|\nabla_Bf(u)\|_2^2\le2F(u)(\kappa_B+\nu_BF(u)).
\end{equation}
Combining \eqref{eq:block-cs-app} with BGG, on the sublevel $\{F\le\Delta\}$,
\begin{equation}\label{eq:global-hess-app}
\|\nabla^2f\|_2\le C(\Delta)^2,\qquad C(\Delta):=\sum_B\sqrt{\kappa_B+\nu_B\Delta}.
\end{equation}
One more application of Cauchy--Schwarz to \eqref{eq:block-cs-app} gives the sharper
\begin{equation}\label{eq:global-hess-trace}
\|\nabla^2f\|_2\le \mathcal T(\Delta):=\sum_B(\kappa_B+\nu_B\Delta)\le C(\Delta)^2,
\end{equation}
since $d^\top Hd\le(\sum_B\|d_B\|_2^2)(\sum_B\|H_{BB}\|_2)$. Writing $\nB$ for the number of blocks,
to keep it apart from the block size $|B|$: for $\nB$ comparable
blocks the two differ by a factor $\nB$, and on the synthetic family of
Appendix~\ref{app:exp-full} the gap is $292$ at $m=300$. The trace form
\eqref{eq:global-hess-trace} is the sharper of the two and is the statement we record; the schedule
below nevertheless uses $C$, because $M_k=\sum_B\sqrt{\bar L_{k,B}}$ is
forced by the sampling normalization and already satisfies $M_k^2\ge\mathcal T(5\Delta_k)$ with room to
spare. Closing that gap therefore requires a different sampling law, not a different
Hessian bound.

\phantomsection\label{proof:presampling}
\paragraph{Safe interpolation and the constant $5$: proof of Lemma~\ref{lem:presampling}.}
Suppose $\Phi=TF(u)+\tfrac12\|v-u^\star\|^2\le\Pi$ and put $\Delta=\Pi/T$. Choose
$M\ge C(5\Delta)$ and $a>0$ with $M^2a^2=T^+:=T+a$, and set $y=(Tu+av)/T^+$. Write
$\bar y=(1-a/T^+)u+(a/T^+)u^\star$ and $d=(a/T^+)(v-u^\star)$, so that $y=\bar y+d$.
Convexity gives $F(\bar y)\le F(u)\le\Delta$. From $M^2a^2=T^+$ we get $a/T^+=1/(M\sqrt{T^+})$, so
the potential bound $\|v-u^\star\|\le\sqrt{2\Pi}$ yields $\|d\|\le\sqrt{2\Delta}/M$. By
summing \eqref{eq:block-gradient-app} over blocks and applying
\eqref{eq:global-hess-trace}, $\|\nabla f(\bar y)\|^2\le2\Delta\,\mathcal T(\Delta)\le2\Delta\,C(\Delta)^2$, so
$\|\nabla f(\bar y)\|\le\sqrt{2\Delta}\,C(\Delta)\le\sqrt{2\Delta}\,M$; only
$M^2\ge\mathcal T(5\Delta)$ is used.

Now suppose the segment $\bar y+\tau d$, $\tau\in[0,1]$, first reached level $5\Delta$. Before that
crossing \eqref{eq:global-hess-app} bounds the Hessian by $M^2$, so Taylor's formula gives
\[
F(\bar y+\tau d)\le\Delta+\tau\sqrt{2\Delta}M\cdot\frac{\sqrt{2\Delta}}{M}
+\frac{\tau^2}{2}M^2\frac{2\Delta}{M^2}
=\Delta+2\tau\Delta+\tau^2\Delta\le4\Delta,
\]
a contradiction. Hence $F(y)\le4\Delta<5\Delta$. Any fixed margin above $4$ would do.

\paragraph{Conditional accelerated cancellation.}
At iteration $k$ set $\Delta_k=\Pi/T_k$, $\sqrt{\bar L_{k,B}}=\sqrt{\kappa_B}+\sqrt{5\nu_B\Delta_k}$
and $M_k=C_0+\sqrt{5\Delta_k}\,C_1$. Then $M_k\ge C(5\Delta_k)$ because
$\sqrt{x+y}\le\sqrt x+\sqrt y$. On the event
$\Phi_k=T_kF(u_k)+\tfrac12\|v_k-u^\star\|^2\le\Pi$, safe interpolation validates every block model
at $y_k$ \emph{before} sampling. The two-table mixture draws exactly
$p_{k,B}=\sqrt{\bar L_{k,B}}/M_k$. Expanding the momentum norm after the sampled update and
combining with \eqref{eq:block-descent-app}, the quadratic terms cancel because
\begin{equation}\label{eq:cancellation-app}
\frac{a_{k+1}^2}{p_{k,B}}=\frac{T_{k+1}p_{k,B}}{\bar L_{k,B}},
\end{equation}
which follows from $M_k^2a_{k+1}^2=T_{k+1}$ and $p_{k,B}=\sqrt{\bar L_{k,B}}/M_k$. In detail, write
$g_B:=\nabla_Bf(y_k)$ and let $B$ be the drawn block, so
$v_{k+1}=v_k-(a_{k+1}/p_{k,B})U_Bg_B$ and
$\tfrac12\|v_{k+1}-u^\star\|^2=\tfrac12\|v_k-u^\star\|^2
-\tfrac{a_{k+1}}{p_{k,B}}\ip{g_B}{(v_k-u^\star)_B}+\tfrac{a_{k+1}^2}{2p_{k,B}^2}\|g_B\|^2$.
Taking the conditional expectation, the middle term becomes
$-a_{k+1}\ip{\nabla f(y_k)}{v_k-u^\star}$ and the last becomes
$\sum_B\tfrac{a_{k+1}^2}{2p_{k,B}}\|g_B\|^2=T_{k+1}\sum_B\tfrac{p_{k,B}}{2\bar L_{k,B}}\|g_B\|^2$ by
\eqref{eq:cancellation-app}, which is exactly $T_{k+1}$ times the decrease
$\E[F(y_k)-F(u_{k+1})\mid\mathcal F_k]$ guaranteed by \eqref{eq:block-descent-app}. What remains is
$T_{k+1}F(y_k)-a_{k+1}\ip{\nabla f(y_k)}{v_k-u^\star}\le T_kF(u_k)$, which is the two convexity
inequalities $F(y_k)+\ip{\nabla f(y_k)}{u_k-y_k}\le F(u_k)$ and
$F(y_k)+\ip{\nabla f(y_k)}{u^\star-y_k}\le0$ weighted by $T_k$ and $a_{k+1}$ and added, using
$T_k(u_k-y_k)+a_{k+1}(v_k-y_k)=0$. Hence
\begin{equation}\label{eq:supermart-app}
\E[\Phi_{k+1}\mid\mathcal F_k]\le\Phi_k\qquad\text{on }\{\Phi_k\le\Pi\}.
\end{equation}

\phantomsection\label{proof:phase-short}
\paragraph{High-probability phase guarantee: proof of Theorem~\ref{thm:phase-short}.}
Stop the process at the first crossing of $\Phi_k>\Pi$. The transition that first crosses is
covered by \eqref{eq:supermart-app}, so the stopped process is a nonnegative supermartingale.
Initially $\Phi_0=T_0F(z)+\tfrac12\|z-u^\star\|^2\le r^2=\delta\Pi$, so Doob's maximal inequality
bounds the probability of any crossing by $\delta$. On the complementary event
$T_NF(u_N)\le\Pi$, and the stopping rule $T_N\ge\Pi/\eps$ gives $F(u_N)\le\eps$.

\paragraph{Deterministic growth of the weights.}
Let $s_k:=\sqrt{T_k}$ (not to be confused with the Bregman exponents $t_j$ of Section~\ref{sec:hg-short}) and $\Gamma:=\sqrt{5\Pi}\,C_1$, so that $M_k=C_0+\Gamma/s_k$. With
$a_{k+1}=s_{k+1}^2-s_k^2$, the defining equation $M_k^2a_{k+1}^2=s_{k+1}^2$ becomes
\begin{equation}\label{eq:weight-growth-app}
s_{k+1}-s_k=\frac{s_{k+1}}{(C_0+\Gamma/s_k)(s_{k+1}+s_k)} .
\end{equation}
Since $1/2\le s_{k+1}/(s_{k+1}+s_k)\le1$,
\[
\frac1{2(C_0+\Gamma/s_k)}\le s_{k+1}-s_k\le\frac1{C_0+\Gamma/s_k} .
\]
While $s_k\le\Gamma/C_0$ the lower bound gives $s_{k+1}\ge s_k(1+1/(4\Gamma))$, so $s_k$ grows
geometrically and reaches $\Gamma/C_0$ after $O((1+\Gamma)\log(\Gamma/(C_0s_0)))$ iterations: for
$\Gamma<1/4$ the factor $1+1/(4\Gamma)$ is large and the count is logarithmic, while for large
$\Gamma$ the warm-up is of order $\Gamma$ up to a logarithm. Thereafter every increment is
at least $1/(4C_0)$. Reaching $s_N\ge\sqrt{\Pi/\eps}$
therefore takes
\[
N=\widetilde O\bigl(1+C_0\sqrt{\Pi/\eps}+\Gamma\bigr)
=\widetilde O\Bigl(1+\frac{rC_0}{\sqrt{\delta\eps}}+\frac{rC_1}{\sqrt\delta}\Bigr),
\]
which is \eqref{eq:query-short}, and the overshoot obeys
\begin{equation}\label{eq:overshoot-app}
s_N\le\sqrt{\Pi/\eps}+1/C_0 .
\end{equation}

\paragraph{From high probability to expectation.}
Let $0<\eps<\bar\Delta$. Every repetition uses the original $z$, the same $r$ and $\bar\Delta$,
failure level $\delta=1/3$, target $\eps/2$, and an independent sampling stream; return the
smallest-objective point of $\{z,u^{(1)},\dots,u^{(J_{\rm rep})}\}$. Each phase succeeds with
probability at least $2/3$, so all fail with probability at most $3^{-J_{\rm rep}}$. Since the
return is safeguarded by $z$,
\[
\E F(\widehat u)\le\eps/2+\bar\Delta\,3^{-J_{\rm rep}}\le\eps,
\qquad J_{\rm rep}=\Bigl\lceil\frac{\log(2\bar\Delta/\eps)}{\log3}\Bigr\rceil .
\]
No success test and no knowledge of $f^\star$ is needed, only objective comparisons. The same
computation gives the high-probability form $\Prob(F(\widehat u)\le\eps/2)\ge1-3^{-J_{\rm rep}}$.
Keeping $z$ fixed is what allows the radius to be reused: a smaller function value at a new anchor
does not imply a smaller distance to $u^\star$.

\begin{table}[htbp]
\centering\small
\caption{The quantities entering the SE complexity bounds, and what each controls.}
\label{tab:constants-main}
\begin{tabular}{@{}lll@{}}
\toprule
quantity & meaning & role \\
\midrule
$\kappa_B$ & baseline of the curvature envelope & stiffness near the solution \\
$\nu_B$ & growth of curvature per unit gap & transient inflation far from it \\
$d_B$ & touched-data cost $|B|+\nnz(A_{B:})$ & arithmetic per sampled block \\
$C_0,C_1$ & unweighted aggregates & number of block queries \\
$S_0,S_1$ & cost-weighted aggregates & total sparse arithmetic \\
\bottomrule
\end{tabular}
\end{table}

\paragraph{Weighted sparse work.}
With $S_0=\sum_Bd_B\sqrt{\kappa_B}$ and $S_1=\sum_Bd_B\sqrt{\nu_B}$, conditional on the history,
\begin{equation}\label{eq:expected-block-cost-app}
\E[d_{B_k}\mid\mathcal F_k]
=\frac{S_0+\sqrt{5\Delta_k}\,S_1}{C_0+\sqrt{5\Delta_k}\,C_1}
=\frac{S_0+(\Gamma/s_k)(S_1/C_1)}{C_0+\Gamma/s_k}
\end{equation}
when $C_1>0$; the first form shows what the identity says, the second is the one the schedule
recursion consumes, and they agree because $\sqrt{5\Delta_k}=\sqrt{5\Pi}/s_k=\Gamma/(s_kC_1)$. This
is the reason for carrying $(C_0,C_1)$ and $(S_0,S_1)$ separately: the first pair controls how many
block queries are made, the second what data those queries read. The lower inequality after \eqref{eq:weight-growth-app} bounds the first contribution
by $2S_0(s_{k+1}-s_k)$, which telescopes to $2S_0 s_N$. For the second, the \emph{lower} inequality after
\eqref{eq:weight-growth-app} reads $1/(C_0s_k+\Gamma)\le2(s_{k+1}-s_k)/s_k=:2x_k$, so
$\sum_k1/(C_0s_k+\Gamma)\le2\sum_kx_k$; with $x_k\le\vartheta_0:=1/(C_0s_0+\Gamma)$ and
$x\le(1+\vartheta_0)\log(1+x)$ on $[0,\vartheta_0]$, and $\sum_k\log(1+x_k)=\log(s_N/s_0)$, the
second contribution sums to $O(\sqrt\Pi\,S_1(1+\vartheta_0)\log(s_N/s_0))$. The dependence on the initial gap sharpens by keeping both
terms in the denominator: for $C_1>0$,
\begin{equation}\label{eq:retained-denominator}
\sqrt\Pi \vartheta_0=\frac{\sqrt\Pi}{C_0s_0+\sqrt{5\Pi}\,C_1}
\le\min\Bigl\{\frac{\sqrt{2\bar\Delta}}{\sqrt\delta\,C_0},\ \frac1{\sqrt5\,C_1}\Bigr\}.
\end{equation}
Together with \eqref{eq:overshoot-app}, at fixed failure level and phase target $\eps/2$,
\[
\E W_{\rm inner}=\widetilde O\Bigl(\frac{S_0}{C_0}+\frac{rS_0}{\sqrt\eps}+rS_1
+S_1\min\bigl\{\tfrac{\sqrt{2\bar\Delta}}{C_0},\tfrac1{C_1}\bigr\}\Bigr).
\]
for the inner work of one phase. Summing over the $J_{\rm rep}$ independent repetitions changes
only logarithmic factors, and the phase-boundary, comparison and reset work is absorbed below, so
the same expression bounds $\E W_{\rm opt}$. If $C_1=0$ then $S_1=0$, the second
table is omitted, and the bound reduces to $\widetilde O(S_0/C_0+rS_0/\sqrt\eps)$ with no division
by $C_1$. Appendix~\ref{app:sparse-alg} absorbs comparisons, candidate storage and resets into the
accumulated touched work; initialization and any requested dense output are separate passes. This
proves Theorem~\ref{thm:work-short}.

\section{Exact sparse implementation and phase boundaries}\label{app:sparse-alg}
The lazy identities were stated in Section~\ref{sec:lazy}; we
verify them and add the safeguard. From $y_k=(T_ku_k+a_{k+1}v_k)/T_{k+1}$ and $T_{k+1}=T_k+a_{k+1}$,
\[
y_k=v_k+\frac{T_k(u_k-v_k)}{T_{k+1}}=z+\tilde v_k+\frac{w_k}{T_{k+1}},
\]
which is the factorization of $y_k$ used there. For the updates, $v_{k+1}=v_k-\xi U_{B_k}g$ gives the first half of
\eqref{eq:lazy-updates} directly, and, writing $u_{k+1}=y_k+\eta_k$ with $\eta_k$ supported on $B_k$,
\[
w_{k+1}=T_{k+1}(u_{k+1}-v_{k+1})
=T_{k+1}\bigl(y_k-v_k+\eta_k+\xi U_{B_k}g\bigr)
=w_k+T_{k+1}\bigl(\xi U_{B_k}g+\eta_k\bigr),
\]
using $T_{k+1}(y_k-v_k)=T_ku_k-T_kv_k=w_k$. The explicit step $\eta_k=-U_{B_k}g/\bar L_{k,B_k}$
recovers the coefficient $\xi-\bar L_{k,B_k}^{-1}$.
The transformed projections follow the same recurrences composed with $A^\top$:
\[
\zeta^v_{k+1}=\zeta^v_k-\xi A^\top U_{B_k}g,\qquad
\zeta^w_{k+1}=\zeta^w_k+T_{k+1}A^\top\bigl(\xi U_{B_k}g+\eta_k\bigr),
\]
and each is supported on $\supp(A_{B_k:})$. With row-oriented storage all four updates and
\eqref{eq:sparse-grad} cost $O(|B_k|+\nnz(A_{B_k:}))=O(d_{B_k})$;
Algorithm~\ref{alg:sparse-step} collects one such step.

\paragraph{Sparse phase safeguards.}
Let $I$ be the union of visited dual blocks and $J=\bigcup_{i\in I}\supp(A_{i:})$. The candidate
displacement $\hat h=\tilde v_N+w_N/T_N$ is supported on $I$ and its transform
$A^\top\hat h=\zeta^v_N+\zeta^w_N/T_N$ on $J$. Storing $e_j^0=\exp((\zeta^0_j-c_j)/\gamma)$ at the phase base,
\begin{equation}\label{eq:sparse-safeguard-app}
\phi_\se(z+\hat h)-\phi_\se(z)
=\gamma\sum_{j\in J}e_j^0\Bigl(e^{(A^\top\hat h)_j/\gamma}-1\Bigr)-\sum_{i\in I}b_i\hat h_i,
\end{equation}
because $(A^\top\hat h)_j=0$ for $j\notin J$. Timestamp arrays or touched-index lists give exact
accept/reject checks and resets in $O(|I|+|J|)$, and $|I|+|J|\le\sum_{k<N}d_{B_k}$, so phase
boundaries are absorbed by the cumulative inner work. No $O(m)$ or $O(n)$ restart scan is hidden.

\paragraph{Fixed-anchor preparation and candidate storage.}
The data $\zeta^0$, $e^0$, the incidence structure, the row and block constants and the
alias tables are prepared once in $O(\nnz(A)+m+n)$ and remain unchanged across the independent
repetitions of Theorem~\ref{thm:work-short}. Each candidate's objective difference from $z$ comes
from \eqref{eq:sparse-safeguard-app}; compare that scalar with the stored best (initialized to
zero) and, if smaller, copy only the current visited-coordinate displacement. Freeing or
overwriting a previous sparse candidate is charged to its earlier copy, so total copying and
resetting is a constant multiple of total visited-set work even when supports differ. Comparisons
of nearly equal values need numerically stable summation; the theorem is stated in exact real
arithmetic and is not a bit-complexity result.

The sampling law is exactly
$p_{k,B}=\tfrac{C_0}{M_k}\pi^{(0)}_B+\bigl(1-\tfrac{C_0}{M_k}\bigr)\pi^{(1)}_B$, the second component
omitted when $C_1=0$, so two static alias tables give $O(1)$ expected sampling after
$O(\nB)$ preprocessing.

\begin{algorithm}[t]
\caption{One exact sparse SE block update}
\label{alg:sparse-step}
\begin{algorithmic}[1]
\State Draw $B_k$ by the two-table mixture of Algorithm~\ref{alg:main-short}.
\State Evaluate $g=\nabla_{B_k}\phi_\se(y_k)$ from incident factors only, via \eqref{eq:sparse-grad}.
\State Update $(\tilde v_k,w_k)$ and $(\zeta^v_k,\zeta^w_k)$ by the four sparse recurrences above.
\State Record touched dual rows and primal factors for sparse phase validation and reset.
\end{algorithmic}
\end{algorithm}

\section{The LSE high-order branch}\label{app:tensor}
\subsection{Gauge, attainment and bounded sublevels}
Let $\mathcal G:=\{h:A^\top h\in\operatorname{span}\{\mathbf1\}\}$ and $\mathcal H:=\mathcal G^\perp$.
If $A^\top h=t\mathbf1$, strict feasibility gives $\ip{b}{h}=t$, so
$\phi_\lse(\lambda+h)=\phi_\lse(\lambda)$; also $A^\top(h-tq)=0$, which proves
$\mathcal G=\ker A^\top+\operatorname{span}\{q\}$. For a simplex point $p$,
$\ip{h}{Ap-b}=t(\mathbf1^\top p-1)=0$, so $Ap-b\in\mathcal H$: an orthonormal parametrization
$\lambda=Q\xi$ with columns of $Q$ spanning $\mathcal H$ has $\|Q^\top(Ap-b)\|_2=\|Ap-b\|_2$, and
the full Euclidean residual is preserved. A nonorthogonal coordinate fixing need not preserve it.

For coercivity on $\mathcal H$, choose strictly feasible $x^{\rm f}>0$ with
$\mathbf1^\top x^{\rm f}=1$ and set $\varsigma(h):=\max_ja_j^\top h-\ip{b}{h}$ for unit
$h\in\mathcal H$. The second term is the average $\sum_jx_j^{\rm f}a_j^\top h$ with strictly
positive weights, so $\varsigma(h)\ge0$ with equality only if all $a_j^\top h$ coincide, i.e.\
$h\in\mathcal G$. If $\dim\mathcal H>0$, compactness of the unit sphere gives
$\varsigma_*:=\min\varsigma>0$, and for $t\ge0$,
$\phi_\lse(th)\ge t\varsigma_*-\max_jc_j$. Hence all sublevels on $\mathcal H$ are compact. The positive
softmax weights make the Hessian strictly positive on nonzero directions of $\mathcal H$, so the
minimizer is unique. If $\mathcal H=\{0\}$ the residual at the sole gauge representative vanishes
and no iterations are needed.

For transport with two complete marginal families, $\lambda=(\alpha,\beta)$ and
$a_{ij}^\top\lambda=\alpha_i+\beta_j$, so
$\mathcal G=\operatorname{span}\{(\mathbf1,0),(0,\mathbf1)\}$: both marginal-constant directions
must be removed, for example by $\mathbf1^\top\alpha=\mathbf1^\top\beta=0$. Removing only one is
insufficient.

\subsection{Derivative bounds and second-order rates}
Section~\ref{sec:lse-short} states the constants \eqref{eq:lse-constants} and the two second-order
branches; this appendix derives them and records the gauge details. Let $X$ take the value $a_j$
with probability $p_j(\lambda)$ and put $\mu=\E[X]$. Differentiating the softmax weights in a
direction $u$ gives $Dp_j(\lambda)[u]=\gamma^{-1}p_j\ip{a_j-\mu}{u}$, so with
$Y_u:=\ip{X-\mu}{u}$,
\begin{equation}\label{eq:lse-moments-app}
\nabla\phi_\lse(\lambda)=\mu-b,\qquad
\nabla^2\phi_\lse(\lambda)=\tfrac1\gamma\Cov(X),\qquad
\nabla^3\phi_\lse(\lambda)[u,v,w]=\tfrac1{\gamma^2}\E[Y_uY_vY_w],
\end{equation}
the two terms from differentiating $\mu$ in the third identity vanishing because
$\E[Y_u]=0$. The same identities survive an orthogonal restriction to $\mathcal H$.

\phantomsection\label{proof:se-qsc}
\paragraph{Proof of Proposition~\ref{prop:se-qsc}.}
Differentiating $\phi_\se$ three times gives
$\nabla^3\phi_\se(\lambda)[u,u,v]=\gamma^{-2}\sum_jx_j(\lambda)(a_j^\top u)^2(a_j^\top v)$, while
$\nabla^2\phi_\se(\lambda)[u,u]=\gamma^{-1}\sum_jx_j(\lambda)(a_j^\top u)^2$. Every term of the
first sum shares the nonnegative weight $x_j(a_j^\top u)^2$ of the second, so pulling out
$\max_j|a_j^\top v|$ and using $|a_j^\top v|\le\rho\|v\|_2$ gives the claim. The bound is attained
in the limit where one column carries the whole response. Note that it needs no sign assumption on
$A$ and no restriction of $\lambda$, unlike the global Hessian bound, which does not exist for
$\phi_\se$.

\phantomsection\label{proof:lse-geometry}
\paragraph{Proof of Theorem~\ref{thm:lse-geometry}.}
Every bound follows from one observation: for a unit $u$ the scalar variable $u^\top X$ is supported
in an interval of length at most $\Delta_A=\max_{j,k}\|a_j-a_k\|_2$, and a variable supported in an
interval of length $d$ has variance at most $d^2/4$. Hence
$u^\top\nabla^2\phi_\lse u=\gamma^{-1}\Var(u^\top X)\le\Delta_A^2/(4\gamma)$, which is the first
bound. For the second, $|Y_v|\le\Delta_A\|v\|_2$ pointwise, so
\eqref{eq:lse-moments-app} gives
$|\nabla^3\phi_\lse[u,u,v]|\le\gamma^{-2}\Delta_A\|v\|_2\E[Y_u^2]
=\gamma^{-1}\Delta_A\|v\|_2(u^\top\nabla^2\phi_\lse u)$, which is quasi-self-concordance with
$M_{\rm qsc}=\Delta_A/\gamma$. For the third, take $u,v,w$ unit, bound $|Y_w|\le\Delta_A$ and apply
Cauchy--Schwarz with the variance bound to get
$\E|Y_uY_v|\le\Delta_A^2/4$; homogeneity extends the result to arbitrary arguments. Since
$\Delta_A\le2\rho$ for $\rho=\max_j\|a_j\|_2=\|A\|_{1\to2}$, the column-norm constants
$L_\lse\le\rho^2/\gamma$, $M_{\rm qsc}\le2\rho/\gamma$ and $M_2\le2\rho^3/\gamma^2$ follow, and the
exponent comparisons below are stated in those. More generally each fixed order $\ell$ has a
derivative Lipschitz bound $O_\ell(\rho^{\ell+1}/\gamma^\ell)$ from the bounded-support cumulant
formula.

\paragraph{Gradient regularization.}
The quasi-self-concordance bound controls relative rather than additive Hessian variation,
$e^{-M_{\rm qsc}\|h\|_2}H(\lambda)\preceq H(\lambda+h)\preceq e^{M_{\rm qsc}\|h\|_2}H(\lambda)$,
which is what licenses a regularizer proportional to the current gradient norm: far from a solution
the shift limits the step, and at feasibility it vanishes. Writing $f=\phi_\lse|_{\mathcal H}$,
$g_k=\nabla f(\lambda_k)$ and $\varrho_k=\|g_k\|_2$ (not the supplied radius $r$ of
Section~\ref{sec:alg-short}), the last quantity is exactly the primal residual
$\|Ap(\lambda_k)-b\|_2$ by \eqref{eq:lse-moments-app}. The constant choice
$\sigma_k=M_{\rm qsc}$ is admissible; if the constant is not known tightly,
\citet{doikov2023qsc} double $\sigma_k$ until
$\ip{g_{k+1}}{-h_k}\ge\varrho_{k+1}^2/(2\sigma_k\varrho_k)$, a test in quantities already computed at the trial
point, which costs at most one extra trial on average with the usual warm start.

\begin{proposition}[GRN on the LSE dual]\label{prop:grn-rate}
Let $\mathcal L_0=\{\lambda\in\mathcal H:f(\lambda)\le f(\lambda_0)\}$ and
$D\ge\sup_{u,v\in\mathcal L_0}\|u-v\|_2$, finite by the coercivity above. If $M_{\rm qsc}>0$ and
$\sigma_k\in(0,2M_{\rm qsc}]$ satisfies the progress test, then for $k\ge1$
\begin{equation}\label{eq:grn-rate}
f(\lambda_k)-f^\star\le e^{-k/(8M_{\rm qsc}D)}\bigl(f(\lambda_0)-f^\star\bigr)+e^{-k/4}\varrho_0D ,
\end{equation}
so an LSE dual gap at most $\eps$ is reached in
$O\bigl((1+M_{\rm qsc}D)\log((f(\lambda_0)-f^\star+\varrho_0D)/\eps)\bigr)$ iterations, with no
strong-convexity assumption. With $M_{\rm qsc}\le\Delta_A/\gamma$ this is the instantiation used in
Section~\ref{sec:lse-short}.
\end{proposition}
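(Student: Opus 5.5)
We follow the gradient-regularized Newton analysis of \citet{doikov2023qsc}, instantiated on $f=\phi_\lse|_{\mathcal H}$. The inputs we already have are three. Theorem~\ref{thm:lse-geometry} gives quasi-self-concordance with $M_{\rm qsc}\le\Delta_A/\gamma$. The gauge paragraph gives compact sublevels on $\mathcal H$, so $D<\infty$. The progress test, which is either satisfied by construction or enforced by doubling, gives $\ip{g_{k+1}}{-h_k}\ge\varrho_{k+1}^2/(2\sigma_k\varrho_k)$.

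The iteration is $h_k=-(\nabla^2f(\lambda_k)+\sigma_k\varrho_kI)^{-1}g_k$, and the argument proceeds in three steps.

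\textbf{Step 1: per-step decrease.} Quasi-self-concordance gives the relative Hessian sandwich $e^{-M\|h\|}H\preceq H(\lambda+h)\preceq e^{M\|h\|}H$. The regularizer $\sigma_k\varrho_k$ bounds the step by $\|h_k\|\le1/\sigma_k$. With $\sigma_k\le2M_{\rm qsc}$ this keeps $M\|h_k\|$ bounded by a constant on the good branch. Integrating the sandwich along the segment then shows that the Newton model is accurate up to a constant factor. Together with the progress test this yields $f(\lambda_k)-f(\lambda_{k+1})\ge c\,\varrho_{k+1}^2/(\sigma_k\varrho_k)$ and also $f(\lambda_{k+1})\le f(\lambda_k)$. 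Monotonicity keeps the iterates in $\mathcal L_0$, which is what allows $D$ to be used.

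\textbf{Step 2: linking gap and gradient.} On $\mathcal L_0$, convexity gives $F_k:=f(\lambda_k)-f^\star\le\varrho_kD$.

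\textbf{Step 3: two regimes.} The iteration splits according to the ratio $\varrho_{k+1}/\varrho_k$.
\begin{itemize}
\item If $\varrho_{k+1}\ge\varrho_k/2$, the decrease from Step 1 is at least $c'\varrho_k/M_{\rm qsc}\ge c'F_k/(M_{\rm qsc}D)$. The gap therefore contracts by the factor $1-1/(8M_{\rm qsc}D)$.
\item Otherwise the gradient norm halves.
\end{itemize}
The claimed bound \eqref{eq:grn-rate} is a sum of two exponentials, $e^{-k/(8M_{\rm qsc}D)}F_0+e^{-k/4}\varrho_0D$. We prove it by induction with the potential $\min\{F_k,\varrho_kD\}$. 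One way to organize this is to show that after $k$ steps at least one of the two counts, gap-contraction steps or gradient-halving steps, is large, of order $k/2$. Then either $F_k$ has contracted by $(1-1/(8M_{\rm qsc}D))^{k/2}$, or $F_k\le\varrho_kD\le2^{-k/2}\varrho_0D\le e^{-k/4}\varrho_0D$ after adjusting constants.

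The iteration count follows by setting each term to $\eps/2$ and using $\log(1/(1-x))\ge x$.

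\textbf{Main obstacle.} The main difficulty is Step 1. We need constants that make the halving branch and the contraction branch combine cleanly into exactly the exponents $1/(8M_{\rm qsc}D)$ and $1/4$. We also need to ensure that $\sigma_k$ below $M_{\rm qsc}$, which is allowed because only the progress test is assumed, does not break the step-length control. For that point we rely on the progress test itself rather than on the a priori bound $\|h_k\|\le1/\sigma_k$. A second concern is the restriction to $\mathcal H$. It is harmless because gauge directions are flat and the orthonormal parametrization preserves $\varrho_k$.
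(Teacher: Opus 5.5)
Your plan agrees with the paper, which proves the proposition only by citing \citet[Theorem~3.3]{doikov2023qsc} on $\mathcal H$ with the constant of Theorem~\ref{thm:lse-geometry}. Your reconstruction of that theorem, however, has a genuine gap in Step~3.

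\textbf{The gap.} The branch $\varrho_{k+1}\ge\varrho_k/2$ does not prevent the gradient norm from growing. The progress test together with $\|h_k\|\le1/\sigma_k$ only gives $\varrho_{k+1}\le2\varrho_k$. So $k/2$ halving steps can be cancelled by $k/2$ doubling steps, and $\varrho_k\le2^{-k/2}\varrho_0$ does not follow from the count. The potential $\min\{F_k,\varrho_kD\}$ does not help, because your Step~2 makes it identically $F_k$. The constants fail as well. With the factor $\tfrac12$ in the test and $\sigma_k\le2M_{\rm qsc}$, a contracting step gives only $F_{k+1}\le(1-1/(16M_{\rm qsc}D))F_k$ through $F_k\le\varrho_kD$. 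The $k/2$ count then halves the exponent again, so no adjustment of constants recovers $1/(8M_{\rm qsc}D)$.

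\textbf{A repair.} Use the gap bound at the \emph{new} point, $F_{k+1}\le\varrho_{k+1}D$, inside the decrease, so that steps on which the gradient grows are rewarded rather than lost. The decrease needs no Hessian sandwich: convexity and the test give $F_k-F_{k+1}\ge\ip{g_{k+1}}{-h_k}\ge\varrho_{k+1}^2/(2\sigma_k\varrho_k)$. Your Step~1 also has a direction reversed: an upper bound on $\sigma_k$ cannot bound $M_{\rm qsc}\|h_k\|\le M_{\rm qsc}/\sigma_k$. Quasi-self-concordance is needed only to show that the test passes at $\sigma_k=M_{\rm qsc}$, which the proposition takes as a hypothesis. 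Put $r_i:=\varrho_{i+1}/\varrho_i$ and $c:=1/(4M_{\rm qsc}D)$. Then $F_{i+1}(1+cr_i)\le F_i$, hence
\[
F_k\le\min\Bigl\{F_0\prod_{i<k}(1+cr_i)^{-1},\ \varrho_0D\prod_{i<k}r_i\Bigr\}.
\]
If $\prod_ir_i\le e^{-k/4}$, the second term already suffices. Otherwise Jensen's inequality for the convex increasing map $u\mapsto\log(1+ce^u)$ gives $\prod_i(1+cr_i)^{-1}<(1+ce^{-1/4})^{-k}$. An elementary check in $y=1/(M_{\rm qsc}D)$ shows $\log(1+ce^{-1/4})\ge\min\{1/(8M_{\rm qsc}D),1/4\}$: use concavity for $y\le2$ and monotonicity for $y\ge2$. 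Hence either $F_k\le F_0e^{-k/(8M_{\rm qsc}D)}$, or $F_k\le F_0e^{-k/4}\le\varrho_0De^{-k/4}$. This is exactly \eqref{eq:grn-rate}, and your final iteration-count step then goes through unchanged.
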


\noindent Equation~\eqref{eq:grn-rate} is the Euclidean smooth specialization of
\citet[Theorem~3.3]{doikov2023qsc} applied on $\mathcal H$, whose initial sublevel set has diameter
$D$; Theorem~\ref{thm:lse-geometry} supplies the quasi-self-concordance parameter. The stated
iteration count follows by making each term at most $\eps/2$, which needs
$k\ge8M_{\rm qsc}D\log(2(f(\lambda_0)-f^\star)/\eps)$ and $k\ge4\log(2\varrho_0D/\eps)$ respectively. Near
a solution the local analysis of \citet[Section~5]{doikov2023qsc} adds a quadratic phase in the
corresponding normalized gradient measure. Unlike CRN below, GRN needs no valid Lipschitz-Hessian
constant, but it does need the progress test; CRN needs the constant and no search, at the price of
a cubic subproblem instead of one shifted linear system.

\paragraph{Cubic regularization.}
From $\lambda_0\in\mathcal H$, let $D\ge\sup_{\lambda\in\mathcal L_0}\|\lambda-\lambda^\star_\lse\|_2$
on the initial sublevel $\mathcal L_0$. The cubic step
\begin{equation}\label{eq:crn-model-short}
h_k\in\arg\min_{h\in\mathcal H}\Bigl\{\ip{g_k}{h}+\tfrac12\ip{H_kh}{h}+\tfrac{M_2}{6}\|h\|_2^3\Bigr\}
\end{equation}
is monotone and satisfies $F_\lse(\lambda_k)=O(M_2D^3/k^2)$ \citep{nesterov2006cubic};
Algorithm~\ref{alg:crn} states the iteration. Global
smoothness gives $\|Ap(\lambda_k)-b\|_2^2\le2L_\lse F_\lse(\lambda_k)$, hence
$k_{\rm CRN}(\eps_{\rm p})=O(1+\rho^{5/2}D^{3/2}/(\gamma^{3/2}\eps_{\rm p}))$.

\paragraph{Accelerated primal--dual tensor method, order $2$.}
For zero initial iterates on $\mathcal H$ and $R\ge\|\lambda^\star_\lse\|_2$, apply
\citet[Theorem~5.4]{dvurechensky2024near} to the simplex primal and its LSE dual. With the sign
convention reversed and weights reindexed as $a_{i+1}$, the returned primal point is the weighted
average
\begin{equation}\label{eq:tensor-average-short}
\widehat p_k=\frac1{A_k}\sum_{i=0}^{k-1}a_{i+1}p(\lambda_{i+1}),\qquad A_k=\sum_{i=0}^{k-1}a_{i+1},
\end{equation}
and for a regularization parameter equal to a fixed multiple strictly greater than $M_2$,
\begin{equation}\label{eq:tensor-direct-short}
\|A\widehat p_k-b\|_2=O(M_2R^2/k^3),\qquad
\bigl|f_s(\widehat p_k)+\phi_\lse(\lambda_k)\bigr|\le O(M_2R^3/k^3).
\end{equation}
Hence $k_{\rm accT}(\eps_{\rm p})=O(1+\rho R^{2/3}/(\gamma^{2/3}\eps_{\rm p}^{1/3}))$. This is the
reason the method averages: the estimate is primal--dual and direct, rather than a dual-gap rate
converted to feasibility at the cost of a square root. A diameter or sublevel radius may replace
$R$ only if it actually bounds the distance from the stipulated zero start.

\paragraph{Translation to a nonzero start.}
Let $\lambda_0\in\mathcal H$ and $R\ge\|\lambda^\star_\lse-\lambda_0\|_2$. Writing $\upsilon=\lambda-\lambda_0$,
$\widetilde c=c-A^\top\lambda_0$, $\widetilde f_s(p)=f_s(p)-\ip{\lambda_0}{Ap}$ and
$\widetilde\phi(\upsilon)=\phi_\lse(\lambda_0+\upsilon)+\ip{b}{\lambda_0}$ leaves the primal optimizer and
responses unchanged, and the zero-start theorem gives
\begin{equation}\label{eq:centered-tensor-gap}
\bigl|f_s(\widehat p_k)+\phi_\lse(\lambda_k)-\ip{\lambda_0}{A\widehat p_k-b}\bigr|\le O(M_2R^3/k^3).
\end{equation}
The uncentered gap is bounded only after adding $\|\lambda_0\|_2\|A\widehat p_k-b\|_2$, which is
why the starting point cannot be suppressed in a direct primal--dual guarantee.

\paragraph{Near-optimal gradient-norm method.}
With $R\ge\|\lambda_0-\lambda^\star_\lse\|_2$, the initial-distance specialization of
\citet{dvurechensky2024near} gives
$k_{\rm nearT}(\eps_{\rm p})=\widetilde O(1+(M_2R^2/\eps_{\rm p})^{2/7})
=\widetilde O(1+\rho^{6/7}R^{4/7}/(\gamma^{4/7}\eps_{\rm p}^{2/7}))$.
This is a theoretical benchmark, not an implementation in this paper; its line searches and
regularizations are part of the logarithmic oracle overhead.

\begin{table}[htbp]
\centering\small
\caption{Second-order LSE regimes at the common target $\|Ap-b\|_2\le\eps_{\rm p}$. All methods
start at zero on $\mathcal H$. $D$ is a sublevel radius, $R$ a solution-norm bound. Bounds are understood with an additive constant and exact local subproblem
solves.}
\label{tab:lse-main}
\begin{tabular}{@{}lll@{}}
\toprule
Method & primal output & iterations \\
\midrule
CRN & $p(\lambda_k)$ & $O(\rho^{5/2}D^{3/2}/(\gamma^{3/2}\eps_{\rm p}))$\\
accelerated tensor & $\widehat p_k$ in \eqref{eq:tensor-average-short}
& $O(\rho R^{2/3}/(\gamma^{2/3}\eps_{\rm p}^{1/3}))$\\
near-optimal 2nd order & response at returned iterate
& $\widetilde O(\rho^{6/7}R^{4/7}/(\gamma^{4/7}\eps_{\rm p}^{2/7}))$\\
\bottomrule
\end{tabular}
\end{table}

\begin{algorithm}[t]
\caption{Cubic-regularized Newton on the LSE dual}
\label{alg:crn}
\begin{algorithmic}[1]
\Require $\lambda_0\in\mathcal H$; $\lambda_0=0$ for the comparison in Table~\ref{tab:lse-main}.
\For{$k=0,\ldots,N-1$}
\State $p_k\gets\operatorname{softmax}((A^\top\lambda_k-c)/\gamma)$, $g_k\gets Ap_k-b$
\State $H_k\gets\gamma^{-1}A(\Diag p_k-p_kp_k^\top)A^\top$
\State solve \eqref{eq:crn-model-short} over $h_k\in\mathcal H$; $\lambda_{k+1}\gets\lambda_k+h_k$
\EndFor
\end{algorithmic}
\end{algorithm}

Preparing an explicit orthonormal gauge basis, when used, is additional preprocessing, and the
complexity discussion counts high-order oracle steps and their dense algebra, not unit-cost Hessian
or cubic solves. The quasi-self-concordance constant $M_{\rm qsc}\le2\rho/\gamma$ used by GRN
follows from the same third-derivative formula: bounding one factor by $2\rho\|v\|_2$ and the
remaining quadratic form by $\gamma\,\nabla^2\phi_\lse[u,u]$ gives
$|\nabla^3\phi_\lse[u,u,v]|\le(2\rho\|v\|_2/\gamma)\,\nabla^2\phi_\lse[u,u]$.

\section{Primal reconstruction, certificates and transport}\label{app:cert-ot}
For SE, normalize $\widehat x=x(\lambda)/\|x(\lambda)\|_1$. Writing
$\gKL(u\|v)=\sum_j\bigl(u_j\log(u_j/v_j)-u_j+v_j\bigr)$ for the generalized (unnormalized)
Kullback--Leibler divergence, the exact identity $F_\se(\lambda)=\gamma\gKL(x^\star\|x(\lambda))$
gives
\begin{equation}\label{eq:primal-recovery-short}
\KL(x^\star\|\widehat x)\le F_\se(\lambda)/\gamma,\qquad
\|\widehat x-x^\star\|_1\le\sqrt{2F_\se(\lambda)/\gamma},
\end{equation}
the second by Pinsker's inequality \citep{cover2006elements}. The stronger statement is $F_\lse(\lambda)=\gamma\KL(x^\star\|\widehat x)$ together with
$F_\se=F_\lse+\gamma(Z-1-\log Z)$: the two gaps quantify different errors even at the same
normalized primal point.

\subsection{A radius-based computable certificate}
Suppose $R_{\rm abs}\ge\|\lambda^\star_\lse\|_2$ is supplied for the orthogonal gauge
representative. For any simplex point $w$ and any dual $\lambda$, define
\begin{equation}\label{eq:computable-cert}
\mathcal C_{R_{\rm abs}}(w,\lambda):=f_s(w)+\phi_\lse(\lambda)+R_{\rm abs}\|Aw-b\|_2 .
\end{equation}
Weak duality at $\lambda^\star_\lse$ gives
$f_s(w)-f_s^\star\ge\ip{\lambda^\star_\lse}{Aw-b}\ge-R_{\rm abs}\|Aw-b\|_2$; adding
$F_\lse(\lambda)=\phi_\lse(\lambda)+f_s^\star$ and using $F_\lse\ge0$ yields
\begin{equation}\label{eq:certificate-bounds}
0\le F_\lse(\lambda)\le\mathcal C_{R_{\rm abs}}(w,\lambda),\qquad
-R_{\rm abs}\|Aw-b\|_2\le f_s(w)-f_s^\star\le\mathcal C_{R_{\rm abs}}(w,\lambda).
\end{equation}
Checking $\|Aw-b\|_2\le\eps_{\rm p}$ and
$\max\{\mathcal C_{R_{\rm abs}}(w,\lambda),R_{\rm abs}\|Aw-b\|_2\}\le\eps_{\rm f}$ therefore
certifies both the feasibility tolerance and $|f_s(w)-f_s^\star|\le\eps_{\rm f}$. Taking $w=p(\lambda)$,
the identity $f_s(p(\lambda))+\phi_\lse(\lambda)=\ip{\lambda}{Ap(\lambda)-b}$ makes
\eqref{eq:computable-cert} particularly cheap. This is not a radius-free certificate: a known
$R_{\rm abs}$ is an extra input. The softmax, the residual and the value
\eqref{eq:computable-cert} are formed in one final $O(\nnz(A)+m+n)$ pass using log-sum-exp
evaluation. A known feasible $x^{\rm f}>0$ also
supplies an initial SE gap bound without $f^\star$,
$\bar\Delta=f_s(x^{\rm f})+\phi_\se(z)\ge F_\se(z)$, since $\phi_\se^\star+f_s^\star=0$.

\subsection{Comparison at a common residual tolerance}
Since $\|Av\|_2\le\rho\|v\|_1$ and
$\E\|A\widehat x-b\|_2\le\rho\sqrt{2\E F_\se(\widehat u)/\gamma}$ by Jensen, running
Theorem~\ref{thm:work-short} at $\eps=\gamma\eps_{\rm p}^2/(2\rho^2)$ gives
$\E\|A\widehat x-b\|_2\le\eps_{\rm p}$ at leading accuracy-dependent cost
\begin{equation}\label{eq:se-primal-work}
\widetilde O\!\left(\frac{r\rho}{\gamma\eps_{\rm p}}\sum_Bd_B\sqrt{\gamma\kappa_B}\right).
\end{equation}
For a single phase the same implication holds on its success event. Let
$K_{\rm dense}:=\nnz(A)\omega_{\max}+m^2+m^3+n$ with $\omega_{\max}=\max_j\|a_j\|_0$ denote the
leading dense local algebra plus factor access. Table~\ref{tab:common-primal-work} collects the
resulting leading costs.

\begin{table}[htbp]
\centering\small
\caption{Leading accuracy-dependent arithmetic at residual tolerance $\eps_{\rm p}$. SE also carries
the initialization, transient and output terms of Theorem~\ref{thm:work-short}; dense-method setup
is separate. The radii $r,R,D$ belong to different algorithms and are not equated. No universal
runtime ordering follows from the powers of $\eps_{\rm p}$ alone.}
\label{tab:common-primal-work}
\begin{tabular}{@{}p{0.20\linewidth}p{0.17\linewidth}p{0.55\linewidth}@{}}
\toprule
Method & Guarantee & Leading work \\
\midrule
SE fixed anchor & expectation &
$\widetilde O\!\left(\dfrac{r\rho}{\gamma\eps_{\rm p}}\sum_Bd_B\sqrt{\gamma\kappa_B}\right)$ \\[1.0ex]
LSE CRN & deterministic &
$\widetilde O\!\left(K_{\rm dense}\dfrac{\rho^{5/2}D^{3/2}}{\gamma^{3/2}\eps_{\rm p}}\right)$ \\[1.0ex]
LSE acc.\ tensor & deterministic &
$\widetilde O\!\left(K_{\rm dense}\dfrac{\rho R^{2/3}}{\gamma^{2/3}\eps_{\rm p}^{1/3}}\right)$ \\[1.0ex]
LSE near-optimal & deterministic &
$\widetilde O\!\left(K_{\rm dense}\dfrac{\rho^{6/7}R^{4/7}}{\gamma^{4/7}\eps_{\rm p}^{2/7}}\right)$ \\
\bottomrule
\end{tabular}
\end{table}

\subsection{Transport specialization}
For uniform $d\times d$ transport, singleton and two-marginal block partitions show the difference
between query count and arithmetic. The two-block partition lowers the aggregate curvature query
factor by $d$, but each block costs $d$ times more to touch, so the leading weighted work is
unchanged. This is the structural reason specialized Sinkhorn updates can dominate generic methods
despite favourable block-query counts.

\paragraph{Measured.}
We instantiate Algorithm~\ref{alg:main-short} on this partition, the one run in the paper that is not
singleton. The blocks are the two marginal families, so a column meets exactly one row of each block,
$\rho_B=1$, and \eqref{eq:bgg} holds with $\kappa_B=2\log2\max_{i\in B}b_i/\gamma$ and
$\nu_B=2/\gamma^2$. Exact minimization over a block is the Sinkhorn half-step in the log domain, and
it cannot have a larger value than the explicit step $y-U_B\nabla_Bf(y)/\bar L_B$, so the
admissibility test of Algorithm~\ref{alg:main-short} is met at no cost and the potential argument
applies verbatim. A block pass and a Sinkhorn half-step each read the cost matrix once, so half-steps
are the implementation-independent axis and a Sinkhorn sweep is two of them; the budget is $4000$
passes, what the singleton runs receive in epochs.

Table~\ref{tab:block-partition} reports it. The partition is what makes the analysed schedule usable
on transport at all: with $\kappa=1$ it reaches $10^{-6}$ on three of the five pairs and stops at
$1.9$ to $3.5\cdot10^{-6}$ on the other two, one to two orders below the singleton analysed schedule
taking the same exact step, which stalls between $1.5\cdot10^{-5}$ and $1.5\cdot10^{-4}$ (with the
envelope step instead, between $2.1\cdot10^{-3}$ and $4.3\cdot10^{-2}$). The \emph{rescaled}
singleton variant also reaches $10^{-6}$ on three pairs, in $1906$, $2390$ and $2850$ epochs, but not
the same three; on MicroscopyImages, where both finish, the certified block schedule needs $130$
passes against its $2850$ epochs. What neither does is overtake the method whose step it borrows. Rescaling the gap term by the measured
$F_\se(\lambda_0)/\bar\Delta$ (between $2.1\cdot10^{-2}$ and $2.3\cdot10^{-1}$, and no more covered by
the theorems than the singleton adaptation is) rescues the two pairs that miss but costs
CauchyDensity its own, so neither schedule dominates.

Two checks on the implementation, one of which discriminates and one of which does not. The step the
analysis turns on is the cancellation identity $M_k^2a_{k+1}^2=T_{k+1}$, which is what makes the second
moment of the momentum update cancel against the block descent term. Comparing the two sides along a
trajectory started at $\lambda_0$, they agree to $1.3\cdot10^{-15}$ or better on every pair, and
dropping the $1/p_{k,B}$ from the momentum update moves the disagreement to $7.5\cdot10^{-1}$; that test
has teeth. The potential $\Phi_k=T_kF_\se(u_k)+\tfrac12\|v_k-\lambda^\star_\se\|_2^2$ of
Theorem~\ref{thm:phase-short} also never increases in conditional expectation, taken exactly over both
blocks rather than sampled, over $300$ steps on each pair -- but that test does \emph{not} discriminate,
and the reason is worth reporting: $\bar L_{k,B}$ exceeds $\|\nabla^2_{BB}\phi_\se(y_k)\|_2$ here by a
factor $4.5\cdot10^5$ to $8.0\cdot10^5$, so the inequality has room to absorb a wrong constant. Scaling
$\nu_B$ down by $10^4$ leaves a twelvefold margin and produces no violation. That is the same looseness
Section~\ref{sec:exp-short} measures, seen from the inside. Realized $\Phi_k$ is free to rise; the
statement is a supermartingale, not a monotone decrease. The reference point must be the SE minimizer, which is the
LSE one shifted by $-\gamma\log Z$ along the gauge direction $q$ (Proposition~\ref{prop:dual-short});
on the Shapes pair the unshifted point has $\phi_\se=2.1\cdot10^6$ against the optimal $0.101$, so
the shift is not cosmetic.

\begin{table}[htbp]
\centering\small
\caption{The two-marginal block partition on DOTmark, $4000$ passes. Both block columns are ours:
``analysed'' is Algorithm~\ref{alg:main-short} with its own constants; ``rescaled'' divides the gap term by the
measured $\bar\Delta/F_\se(\lambda_0)$ and is not certified. A parenthesized entry is the best
relative dual gap reached instead of the target. Half-steps are the comparable axis: a block pass and
a Sinkhorn half-step each read the cost matrix once. ``Slack'' is the smallest ratio
$\bar L_{k,B}/\|\nabla^2_{BB}\phi_\se(y_k)\|_2$ over $300$ steps; the potential of
Theorem~\ref{thm:phase-short} decreased in conditional expectation at every one of them, which at this
slack is not a test of the constants. One run per pair, at seed $0$. Sinkhorn sweeps are logged on
this run's finer grid, so they differ by one logging step from the counts quoted in
Section~\ref{sec:exp-short} ($35$ against $40$ here, $45$ against $50$ on GRFmoderate).}
\label{tab:block-partition}
\begin{tabular}{@{}lrrrrrr@{}}
\toprule
& \multicolumn{2}{c}{block passes to $10^{-6}$} & Sinkhorn & \multicolumn{1}{c}{half-steps} & & \\
\cmidrule(lr){2-3}
Pair & analysed & rescaled & sweeps & vs.\ Sinkhorn & $S_0$ ratio & slack \\
\midrule
ClassicImages    & $(3.5\cdot10^{-6})$ & $2750$              & $60$ & $22.9$ & $1.24$ & $8.0\cdot10^{5}$ \\
Shapes           & $45$                & $45$                & $10$ & $2.2$  & $1.60$ & $5.3\cdot10^{5}$ \\
CauchyDensity    & $750$               & $(3.0\cdot10^{-6})$ & $50$ & $7.5$  & $3.77$ & $4.5\cdot10^{5}$ \\
GRFmoderate      & $(1.9\cdot10^{-6})$ & $1695$              & $45$ & $18.8$ & $1.34$ & $6.8\cdot10^{5}$ \\
MicroscopyImages & $130$               & $160$               & $35$ & $1.9$  & $3.49$ & $6.4\cdot10^{5}$ \\
\bottomrule
\end{tabular}
\end{table}

\begin{figure}[t]
\centering
\includegraphics[width=\linewidth]{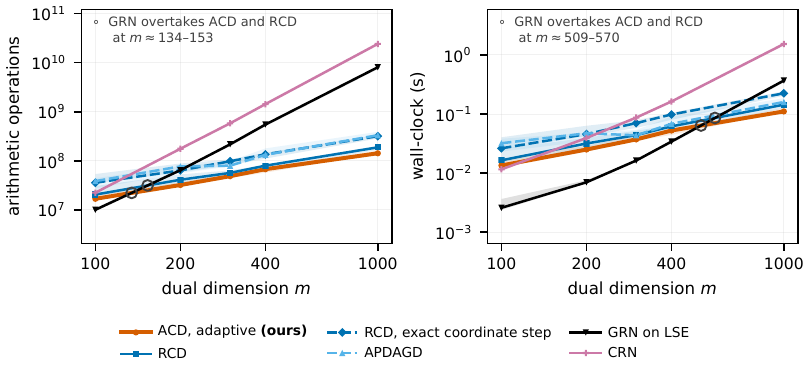}
\caption{Cost to relative dual gap $10^{-9}$ against dual dimension, synthetic family ($n=4m$,
$\bar d=75$, $\gamma=0.05$); medians over five seeds with min--max bands, both axes logarithmic.
Second-order curves rise with slope $\approx3$ and coordinate curves with slope $\approx1$, so each
second-order/coordinate pair crosses once. Open circles mark where GRN crosses ACD and RCD, located
by log-log interpolation between the sampled dimensions. Different pairs cross at different $m$:
CRN is already the more expensive of the two branches in arithmetic at $m=100$, so its arithmetic
crossing lies below the range plotted.}
\label{fig:crossover}
\end{figure}

\section{Experimental protocol}\label{app:exp-full}
\paragraph{Instances.}
Synthetic sparse entropy-regularized LPs are drawn with column nonzero counts
$\max(1,\mathrm{Bin}(m,s_{\rm col}/m))$, where the target mean row degree $\bar d$ fixes the column sparsity $s_{\rm col}=\mathrm{round}(\bar d\,m/n)$, entries $\mathrm{Unif}(0.5,1.5)$,
and columns rescaled so that $A^\top q=\mathbf1$ for $q=\mathbf1/m$; a planted feasible
$x^{\rm f}\sim\mathrm{Dir}(\mathbf1)$ gives $b=Ax^{\rm f}$, and $c\sim\mathrm{Unif}(0,1)$. We use
$n=4m$, $\bar d=75$, $\gamma=0.05$, seeds $0$--$4$; the main instance is $m=300$ and the density
ablation fixes $m=1000$ with $\bar d\in\{10,25,75,150,300\}$. DOTmark
\citep{schrieber2017dotmark} uses $32\times32$ images ($m=2048$, $n=2^{20}$) with squared Euclidean
cost normalized to maximum $1$ and $\gamma=0.01$, on pairs $(1,2)$ of ClassicImages, Shapes,
CauchyDensity, GRFmoderate and MicroscopyImages.

\paragraph{Common start and metric.}
LSE methods start at $\lambda_0=0$; SE methods start at $\lambda_0=t^\star(0)q=-\gamma\log Z(0)q$,
which by Proposition~\ref{prop:dual-short} gives $x(\lambda_0)=p(0)$ and hence a common initial
dual gap for both families. Every curve reports the best-so-far relative dual gap
$(\phi(\lambda)-\phi^\star)/|\phi_\se^\star|$, with each method measured against its own dual.
Because $F_\se\ge F_\lse$ by \eqref{eq:gap-split}, this holds the SE methods to the stricter of the
two criteria. Medians are over five seeds (or five image pairs). Min--max bands are drawn for the adaptive ACD and
RCD curves on the epoch and arithmetic axes; the wall-clock axis and the DOTmark figure carry none. The reference optimum comes from L-BFGS-B on the LSE dual followed by a GRN polish, reaching
an SE stationarity residual with median $2\cdot10^{-14}$ and worst case $1.7\cdot10^{-12}$ across
all runs; the $10^{-12}$ level is therefore at the
resolution floor and should not be read as a converged digit.

\paragraph{Methods and inputs.}
ACD uses $\delta=1/3$ and $\bar\Delta=\phi_\se(\lambda_0)+f_s(x^{\rm f})$ from the planted point.
The radius is taken as $r=2\|\lambda_0-\lambda^\star_\se\|_2$ \emph{from the reference solution}:
the runs therefore use an oracle radius, not the computable bound of
Proposition~\ref{prop:input-cert}, and Section~\ref{sec:limits} should be read with that in mind.
The adaptive variant rescales the baseline $\kappa$ by backtracking on validity at the sampled
coordinates: per epoch, divide by $4$ if no violation occurred and the minimum observed ratio was
at least $4$, and multiply by $4$ (capped at the analysed value) after any violation. This rule has no
guarantee; its violation counts are reported in Table~\ref{tab:cert-sweep}. RCD is Algorithm~4 of
\citet{lobanov2024} with uniform sampling and row-wise constants. GRN, CRN, APDAGD and Sinkhorn use
their theoretical constants with adaptive backtracking, with Sinkhorn run in the log domain.

\paragraph{Arithmetic model.}
A coordinate update is charged per nonzero of the sampled row: $5$ operations for ACD (three to
form the gradient and the local curvature, two to update the lazy state) and $4$ for RCD, plus
$3$ per nonzero per inner Newton iteration when the exact block step is used, capped at $24$
iterations. These per-nonzero constants are the same order but are not calibrated against each
other: counting elementary operations in the two loop bodies gives roughly $15$ for an ACD
coordinate against $6$ for an RCD one, so the model favours ACD by about a factor two on this axis.
It is adequate for growth exponents, which is what we fit from it, and not for margins near unity;
where such a margin matters we quote wall-clock. A matrix--vector product with $A$ costs $1$ per nonzero; a Cholesky factorization
costs $m^3/3$, a symmetric eigendecomposition $m^3$, and the sparse Hessian assembly
$\sum_j\|a_j\|_0^2+m^2$; objective evaluations at logging points are charged as incurred. Wall-clock is indicative only: all runs were executed on one laptop, single process and
single-threaded BLAS, so the timings describe that one machine, and fitted wall-clock exponents
should be read to one decimal.

\begin{figure}[t]
\centering
\includegraphics[width=\linewidth]{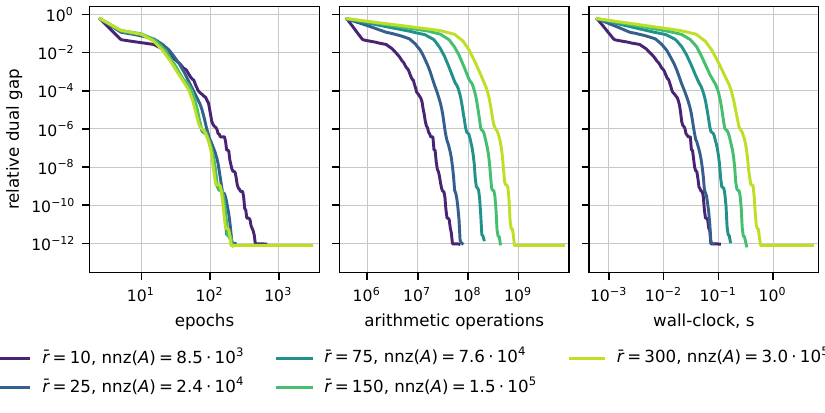}
\caption{Density ablation at fixed dual dimension. Coordinate epochs are almost density
independent, while arithmetic and wall-clock costs track the number of touched nonzeros.}
\label{fig:density-short}
\end{figure}

\subsection{The analysed schedule against its adaptive rescaling}
Table~\ref{tab:cert-sweep} reports the sweep described in Section~\ref{sec:exp-short}, on the
$m=1000$ synthetic instance at target $10^{-6}$ with a budget of $3000$ epochs, against $345$
epochs for RCD and $175$ for RCD with the exact coordinate step. The final column is the smallest observed ratio between the bound
$\kappa_B+\nu_BF$ and the curvature actually encountered at the sampled block, so it measures how
much slack the analysis carries on these instances.

\begin{table}[htbp]
\centering\small
\caption{The analysed schedule (ours) against rescalings of its baseline, $m=1000$ synthetic instance;
medians over five seeds, target $10^{-6}$ relative, budget $3000$ epochs; RCD needs $345$ and
RCD with an exact coordinate step $175$.
``scale'' multiplies $\kappa_B$ inside $\bar L_{k,B}$, so $1$ is Theorem~\ref{thm:hg-short} as
proved and a scale of $10^{-4}$ shrinks $\sqrt{\kappa_B}$, and with it the accuracy channel of $M_k$,
by $10^{-2}$; the growth channel $C_1$ is left untouched throughout;
``epochs'' is when the \emph{iterate} first reaches the target, by the same rule applied to RCD;
``$10^{-12}$'' counts seeds reaching the stopping tolerance within budget; the schedule's own
termination test $T_k\ge\Pi/\eps$ is met at none of the thirty settings, so every epoch count in this
sweep is an iterate crossing and never a completed phase;
``slack'' is the smallest observed ratio of envelope to realized curvature. The full
$5\times6$ sweep over radius and scale is Table~\ref{tab:cert-sweep-full}.}
\label{tab:cert-sweep}
\begin{tabular}{@{}llrrrr@{}}
\toprule
$r/\|\lambda_0-\lambda^\star_\se\|_2$ & scale & epochs & $10^{-12}$ & violations & slack \\
\midrule
$0.5$ & $1$      & $1720$ & $5/5$ & $0$ & $13.6$\\
$1$ & $1$        & ---    & $0/5$ & $0$ & $77.2$\\
$1$ & $10^{-2}$  & $280$  & $5/5$ & $0$ & $2.9$\\
$1$ & $10^{-4}$  & $125$  & $5/5$ & $0$ & $1.9$\\
\midrule
$2$  & $1$ & ---    & $0/5$ & $0$ & $1.5\cdot10^{3}$\\
$5$  & $1$ & ---    & $0/5$ & $0$ & $1.1\cdot10^{4}$\\
$20$ & $1$ & $205$  & $0/5$ & $0$ & $2.9\cdot10^{4}$\\
\bottomrule
\end{tabular}
\end{table}

Three readings. The inequality holds: at the analysed scale no violation was recorded anywhere in
the sweep, on any seed. It is loose, and how loose depends entirely on the radius: the slack runs
from $13.6$ at $r=0.5$ to $2.9\cdot10^4$ at $r=20$, because a larger $r$ inflates $\Pi$ and hence
the gap level at which the envelope is evaluated. And the looseness is what the schedule pays:
at the analysed scale the $10^{-12}$ tolerance is reached only at $r=0.5$, no setting completes a
phase, and competitiveness with RCD needs $\kappa$ shrunk by $10^{2}$ to $10^{4}$, at which point the method is no longer the one analysed in
Section~\ref{sec:alg-short}.

\begin{figure}[htbp]
\centering
\includegraphics[width=\linewidth]{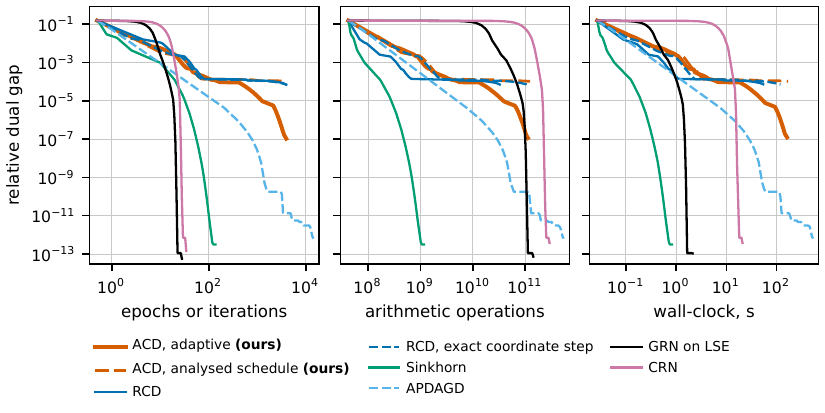}
\caption{DOTmark $32\times32$ transport ($m=2048$, $n=2^{20}$, $\gamma=0.01$), five image pairs.
Sinkhorn dominates on every axis, by minimizing exactly over each $1024$-dimensional marginal block
in closed form: where both methods finish, by $32$ to $71$ on epochs and $230$ to $494$ on
wall-clock.}
\label{fig:dotmark-short}
\end{figure}

\paragraph{Provenance of the reported numbers.}
Every figure and table in the paper is read off one set of runs under the
protocol above, Figures~\ref{fig:synth-short}--\ref{fig:dotmark-short} included. Two quantities are
not. The fitted growth exponents of Table~\ref{tab:crossover} and the two-block query estimate
quoted in Section~\ref{sec:exp-short} are post-processing of the recorded per-iteration traces
rather than outputs of a run. And the deterministic checks of the algebraic identities are not
optimization runs at all; Appendix~\ref{app:fixed-support-protocol} states what they verify.

\subsection{The full radius-by-scale sweep}
Table~\ref{tab:cert-sweep-full} is the sweep abbreviated in Table~\ref{tab:cert-sweep}. Reading
down a column shows the effect of rescaling the analysed baseline at a fixed radius; reading across
a row shows that the slack grows with the supplied radius. The epoch count does not. The schedule's own
termination test is met at none of the thirty settings, so every entry records the first time the
\emph{iterate} passes the target under a schedule that would not have stopped there; the $205$ at
$r=20$, scale $1$, is that reading and not a faster method. No violation of the envelope was
recorded at any of the thirty settings, on any seed.

\begin{table}[htbp]
\centering\small
\caption{Epochs to relative dual gap $10^{-6}$ for the analysed schedule (ours), $m=1000$, medians
over five seeds, budget $3000$ epochs; ``---'' means at least one seed did not reach the target, and
RCD needs $345$. The row
under ``scale $=1$'' gives the smallest observed ratio of envelope to realized curvature.}
\label{tab:cert-sweep-full}
\begin{tabular}{@{}lrrrrr@{}}
\toprule
scale & $r/\|\cdot\|=0.5$ & $1$ & $2$ & $5$ & $20$ \\
\midrule
$1$        & $1720$ & ---    & ---    & ---    & $205$ \\
\emph{slack} & \emph{$13.6$} & \emph{$77.2$} & \emph{$1.5\!\cdot\!10^3$} & \emph{$1.1\!\cdot\!10^4$} & \emph{$2.9\!\cdot\!10^4$} \\
$10^{-1}$  & $530$  & $880$  & $1430$ & $2545$ & ---   \\
$10^{-2}$  & $230$  & $280$  & $425$  & $710$  & $355$ \\
$10^{-3}$  & $150$  & $155$  & $190$  & $195$  & $310$ \\
$10^{-4}$  & $120$  & $125$  & $135$  & $150$  & $210$ \\
$10^{-5}$  & $110$  & $115$  & $115$  & $140$  & $205$ \\
\bottomrule
\end{tabular}
\end{table}

\subsection{A known-solution generator}
This is a proposed protocol, not a claim about completed runs. Construct a sparse $A$ with
$A^\top q=\mathbf1$: one concrete nonnegative construction splits the rows into two nonempty
families and makes each column sum to one within each family, using random nonempty supports and
normalized positive weights, so that $q=(\mathbf1,0)$ works. For a signed variant, add signed
zero-sum perturbations within a family's support, using at least two selected rows so that mixed
signs do not change the column sums. The support law, weight distribution, perturbation law and
seeds must be fixed and reported. Then choose $x^\star>0$ with $\mathbf1^\top x^\star=1$, a finite
$\lambda^\star_\se$, and set
\begin{equation}\label{eq:known-solution-generator}
b:=Ax^\star,\qquad c:=A^\top\lambda^\star_\se-\gamma\log x^\star .
\end{equation}
Then $q^\top b=1$, strict feasibility holds, and $x(\lambda^\star_\se)=x^\star$, so by stationarity
and convexity this is an exact SE minimizer, and its projection onto $\mathcal H$ is an LSE
minimizer. Any chosen start $z$ then has the exactly valid radius
$r=\|z-\lambda^\star_\se\|_2$, and a zero LSE start has $R=\|P_{\mathcal H}\lambda^\star_\se\|_2$.
This supplies a reference solution without running a numerical optimizer.

\subsection{Metrics and numerical accuracy}
Report the full residual $\|Ap(\lambda)-b\|_2$ and the common normalized gap $F_\lse(\lambda)$ for
every method. With a known optimum,
\[
F_\lse(\lambda)=\gamma\KL(x^\star\|p(\lambda)),\qquad
F_\se(\lambda)=\gamma\sum_jx_j^\star h(t_j),\quad t_j=a_j^\top(\lambda-\lambda^\star_\se)/\gamma .
\]
Use log-sum-exp for normalization, compensated or higher-precision summation where cancellation
matters, and a series or \texttt{expm1}-based evaluation of $h(t)$ near zero. To support a reported
$10^{-12}$ gap, verify that the reference error is well below that threshold rather than estimating
it by subtracting nearly equal objective values. The figures divide the gap by $|\phi_\se^\star|$, a
per-instance constant shared by all methods; the ratio $F_\lse(\lambda)/F_\lse(\lambda_0)$ at a
common start is a normalization-invariant alternative.

\paragraph{Where the implementation still differs from the analysed method.}
Every run but the two-marginal transport experiment of
Appendix~\ref{app:cert-ot} is the $|B|=1$ specialization of
Theorem~\ref{thm:work-short}, and no run chooses a partition: the one we test is the one transport
hands us. Each analysed run keeps the anchor fixed and drives a single phase to the final target,
which is the fixed-anchor repetition of Theorem~\ref{thm:work-short}, and RCD is reported both with
the envelope step of its own analysis and with the same exact coordinate solve our variants take, so
that epoch counts are comparable. Two deviations remain, both in favour of the analysed method: the runs report a single phase,
which is the high-probability guarantee of Theorem~\ref{thm:phase-short} rather than the
expectation guarantee of Theorem~\ref{thm:work-short}, and the phase loop is bounded by the epoch
budget rather than by $J_{\rm rep}$.

\paragraph{Validity tests and violations of the adaptive rule.}
The adaptive variant checks the envelope at each sampled coordinate and rescales $\kappa$ from
that. On the $m=300$ instance it performed $312\,107$ such tests over five seeds and recorded
\emph{no} violation; on $m=1000$, $1\,010\,637$ tests and again none. The scale it settles on is
$\kappa=1.53\cdot10^{-5}$, reached by dividing by $4$ once per epoch for eight epochs, and the
smallest observed ratio of envelope to realized curvature falls with it, from $2.0\cdot10^4$ at
$\kappa=1$ to $73$ after five epochs. This is evidence that the rule is not reckless on these
instances; it is not a guarantee, because the test only ever sees the sampled coordinate.

\subsection{Fixed-constant versus adaptive runs}
A direct validation must compare the unchanged fixed-constant wrapper of
Theorem~\ref{thm:work-short}, the adaptive heuristic and RCD on identical instances and starts,
reporting the radius and gap inputs, sampling laws, stopping rules, restart behavior, and all
preparation, phase-boundary and output work. Changing a sampled block's constant and then reusing
that same sample under new sampling weights needs its own conditional-expectation analysis and is
not covered by a proof that fixes every constant beforehand. For each adaptive run, log the number
of validity tests,
violations, rejections and restarts, the violation magnitudes and the recovery rule. A convenient
observable is the sampled descent residual
$f(y-U_B\nabla_Bf(y)/\bar L_B)-f(y)+\|\nabla_Bf(y)\|_2^2/(2\bar L_B)$, together with the
floating-point tolerance used to call it positive; passing this local test does not certify all
blocks or the global accelerated guarantee. Further useful ablations compare signed against
nonnegative rows, uniform against curvature-weighted sampling, and different block partitions, all
measured in total arithmetic rather than query counts.

\section{Smoothness models and their precise implications}\label{app:smoothness-proofs}
All statements here assume a convex $C^2$ function on $\R^m$ with an attained minimum and
$F=f-f^\star$; the Hessian conditions are pointwise global statements. A first-order condition
posed only on a neighbourhood also requires its radius to be tracked, and the two formulations are
not identified without that step.

\subsection{Three distinct conditions}
For a convex $C^2$ function, the following are genuinely different:
\begin{align}
\partial_{ii}^2f(u)&\le L_{0,i}+L_{1,i}|\partial_if(u)|,
&&\text{coordinate-gradient control},\label{eq:lc-model}\\
\partial_{ii}^2f(u)&\le L_{0,i}+L_{1,i}\|\nabla f(u)\|_2,
&&\text{full-gradient control of a diagonal},\label{eq:lf-model}\\
\partial_{ii}^2f(u)&\le\kappa_i+\nu_iF(u),
&&\text{gap-dependent control (BGG)}.\label{eq:hc-model}
\end{align}
Condition \eqref{eq:lc-model} implies \eqref{eq:lf-model} with the same constants, but the converse
fails even for an SE dual, as the example below shows. The \emph{global} $(L_0,L_1)$ model bounds
$\|\nabla^2f\|_2$ rather than only its diagonal. Our accelerated theorem uses \eqref{eq:hc-model}
and does not silently assume \eqref{eq:lc-model} for signed matrices.

\subsection{From coordinate-gradient control to a gap bound}
\begin{lemma}[A coordinate self-bound]\label{lem:lc-to-hc}
Under \eqref{eq:lc-model}, for every $u$ and every coordinate $i$,
\begin{align}
|\partial_if(u)|^2&\le2F(u)\bigl(L_{0,i}+L_{1,i}|\partial_if(u)|\bigr),\label{eq:coordinate-selfbound}\\
\partial_{ii}^2f(u)&\le2L_{0,i}+2L_{1,i}^2F(u).\label{eq:lc-to-hc-exact}
\end{align}
\end{lemma}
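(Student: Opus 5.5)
The plan is to mirror the self-validating block descent argument of Appendix~\ref{app:acc-proof}, now along a single coordinate and with the curvature bound \eqref{eq:lc-model} in place of BGG.

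\textbf{Step 1: a one-dimensional descent.} Fix $u$ and $i$, and put $g=\partial_if(u)$; the case $g=0$ is trivial. Set $\psi(\tau)=f(u-\tau\,\mathrm{sign}(g)e_i)$. The aim is to show that along this ray $|\psi'|$ does not exceed $|g|$, as long as $\tau$ stays in the region where $\psi'$ keeps its sign. The reason is that $\psi$ is convex, so $\psi'$ increases from $-|g|$; while it is still nonpositive, its absolute value $|\partial_if|$ along the ray is at most $|g|$.

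On that stretch, \eqref{eq:lc-model} bounds the curvature: $\psi''\le M:=L_{0,i}+L_{1,i}|g|$. This follows because $\psi''=\partial^2_{ii}f\le L_{0,i}+L_{1,i}|\partial_if|\le M$.

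Integrating the bound gives $\psi'(\tau)\le -|g|+M\tau$. Either $\psi'$ stays nonpositive up to $\tau=|g|/M$, or it crosses zero earlier. In both cases the bound $\psi''\le M$ holds on $[0,\min\{|g|/M,\tau_0\}]$, where $\tau_0$ is the zero of $\psi'$.

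\textbf{Step 2: the decrease.} In the first case, the quadratic upper model gives
\[
f(u)-f\bigl(u-(g/M)e_i\bigr)\ge |g|^2/(2M).
\]
In the second case, $\psi$ is minimized at $\tau_0$. There the model $\psi(\tau)\le\psi(0)-|g|\tau+M\tau^2/2$ stays valid up to $\tau_0$, and for $\tau>\tau_0$ convexity keeps $\psi\ge\psi(\tau_0)$. Since the model is still decreasing at $\tau_0<|g|/M$, we get
\[
\psi(\tau_0)\le\psi(0)-|g|\tau_0+M\tau_0^2/2.
\]
This is a weaker decrease than $|g|^2/(2M)$, so this case needs care.

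To repair it, I would instead use the lower model $\psi'(\tau)\ge\psi'(0)=-|g|$. The cleaner route, though, is to note that $|g|\le M\tau_0$ whenever $\psi''\le M$ on $[0,\tau_0]$. Indeed $\psi'(\tau_0)=0$ forces $|g|=\int_0^{\tau_0}\psi''\le M\tau_0$. Hence $\tau_0\ge|g|/M$, so the second case only occurs with $\tau_0\ge |g|/M$, which is the first case.

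So in all cases
\[
F(u)\ge f(u)-f\bigl(u-(g/M)e_i\bigr)\ge |g|^2/(2M),
\]
which is exactly \eqref{eq:coordinate-selfbound}. The main obstacle is this bootstrapping of the curvature bound along the ray, and the continuity argument just given is where I expect care to be needed.

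\textbf{Step 3: the Hessian bound.} Solve the quadratic inequality $y^2\le 2FL_0+2FL_1y$ for $y=|\partial_if(u)|$. This gives
\[
y\le FL_1+\sqrt{F^2L_1^2+2FL_0}.
\]
Then multiply by $L_1$ and use $\sqrt{a+b}\le\sqrt a+\sqrt b$ together with AM--GM:
\[
L_1\sqrt{2FL_0}\le L_0+L_1^2F/2\le L_0+L_1^2F.
\]
Hence
\[
L_1y\le 2L_1^2F\cdot\tfrac12\cdot 2+L_0 .
\]
Tracking the constants carefully, I expect $L_1y\le L_0+2L_1^2F$ after using $\sqrt{F^2L_1^2+2FL_0}\le FL_1+L_0/L_1$ (valid by squaring). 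Inserting this into \eqref{eq:lc-model} gives
\[
\partial^2_{ii}f\le L_0+L_0+2L_1^2F,
\]
which is \eqref{eq:lc-to-hc-exact}.
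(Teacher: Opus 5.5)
Your argument is correct and is essentially the paper's proof: the same bootstrapping along the coordinate ray (while $\psi'\le0$, convexity keeps $|\partial_if|\le|g|$, so $\psi''\le M$ and the first zero of $\psi'$ cannot come before $|g|/M$), the same quadratic model giving $g^2\le2F(u)M$, and the same elementary algebra for the second bound, where your estimate $\sqrt{F^2L_{1,i}^2+2FL_{0,i}}\le FL_{1,i}+L_{0,i}/L_{1,i}$ is equivalent to the paper's expansion of $(M-L_{0,i})^2=L_{1,i}^2g^2\le2L_{1,i}^2FM$. State the degenerate cases instead of dividing by zero---if $g\ne0$ but $M=0$, then $\partial^2_{ii}f\equiv0$, so $f$ decreases linearly without bound along the ray, contradicting attainment; if $L_{1,i}=0$, the second bound is immediate---and drop the AM--GM line, which only gives $L_{0,i}+\tfrac52L_{1,i}^2F$ rather than the stated constant.
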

\begin{proof}
Write $g=\partial_if(u)$ and $M=L_{0,i}+L_{1,i}|g|$. For $g\ne0$, $M>0$, move along
$u-t\operatorname{sign}(g)e_i$. Until the directional derivative first reaches zero its magnitude
is at most $|g|$ by convexity, so the second derivative is at most $M$; a zero cannot occur before
$t=|g|/M$, since changing the initial derivative $-|g|$ to zero at rate at most $M$ takes at least
that distance. The quadratic upper model is therefore valid up to $|g|/M$, giving
$f(u-ge_i/M)\le f(u)-g^2/(2M)$, and comparison with $f^\star$ proves
\eqref{eq:coordinate-selfbound}. For $g=0$ it is immediate; if $g\ne0$ and $M=0$ then
$L_{0,i}=L_{1,i}=0$ and $f$ is affine with nonzero slope on that line, contradicting attainment.

If $L_{1,i}=0$, \eqref{eq:lc-to-hc-exact} is direct. Otherwise
$(M-L_{0,i})^2=L_{1,i}^2g^2\le2L_{1,i}^2F(u)M$, and for $M>0$ expanding and dividing by $M$ gives
$M\le2(L_{0,i}+L_{1,i}^2F(u))-L_{0,i}^2/M\le2(L_{0,i}+L_{1,i}^2F(u))$. Since
$\partial_{ii}^2f(u)\le M$ the claim follows; $M=0$ is immediate from Hessian nonnegativity.
\end{proof}
The proof uses one-dimensional monotonicity of a \emph{partial} derivative; replacing
$|\partial_if|$ by the full gradient norm inside it is not justified, since the other components
can change along the coordinate line. Conversely, positive semidefiniteness and the trace bound
show that \eqref{eq:lc-model} implies a global norm model with $L_0=\sum_iL_{0,i}$ and
$L_1=(\sum_iL_{1,i}^2)^{1/2}$, that is, aggregated constants rather than the original
per-coordinate ones.
Applying Theorem~\ref{thm:phase-short} through \eqref{eq:lc-to-hc-exact} yields, at fixed failure
level, $\widetilde O(1+r\sum_i\sqrt{L_{0,i}}/\sqrt\eps+r\sum_iL_{1,i})$ queries. That is a
specialization, not a better $L_1$ exponent.

\subsection{A local upper model from the BGG condition}
\begin{lemma}[Explicit local radius]\label{lem:hessian-local-radius}
Suppose $0\le\partial_{ii}^2f(u)\le\kappa_i+\nu_iF(u)$ everywhere. If $\nu_i>0$ and
$|t|\le1/(4\sqrt{\nu_i})$, then
\begin{equation}\label{eq:hessian-local-model}
f(u+te_i)\le f(u)+t\,\partial_if(u)+(\kappa_i+\nu_iF(u))t^2 .
\end{equation}
For $\nu_i=0$ the same holds for every $t$ with the sharper coefficient $\kappa_i/2$.
\end{lemma}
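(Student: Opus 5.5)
The plan is a continuity (bootstrap) argument along the coordinate line. It is driven by the gradient self-bound \eqref{eq:block-gradient-app}, which was already derived from the BGG condition in Appendix~\ref{app:acc-proof}.

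Set $\psi(s):=F(u+se_i)$, $g:=\partial_if(u)$ and $M:=\kappa_i+\nu_iF(u)$. By hypothesis $\psi''(s)\le\kappa_i+\nu_i\psi(s)$ wherever it is evaluated. The case $\nu_i=0$ is immediate: then $\psi''\le\kappa_i$ on the whole line, and Taylor's formula with integral remainder gives the bound with coefficient $\kappa_i/2$ for every $t$. So assume $\nu_i>0$. If $M=0$, then $F(u)=0$ and $\kappa_i=0$, and $u$ is a minimizer. Also $g^2\le2F(u)M=0$ by \eqref{eq:block-gradient-app} applied to the singleton block. The claim then reduces to $f(u+te_i)\le f(u)$. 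I would obtain this by running the argument below with $M$ replaced by an arbitrary $M'>0$ and letting $M'\downarrow0$, exactly as the paper does for the degenerate case. So take $M>0$.

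The key step is to keep the curvature along the segment below $2M$. Let $s_*\in(0,1/(4\sqrt{\nu_i})]$ be the first $|s|$ (on either side of $0$) at which $\kappa_i+\nu_i\psi(s)$ reaches $2M$; if there is no such point, we are done. On $[0,s_*]$ we have $\psi''\le2M$, so Taylor gives
\[
\psi(s)\le F(u)+|s||g|+Ms^2 .
\]
From \eqref{eq:block-gradient-app} and $\nu_iF(u)\le M$ we get
\[
|g|\le\sqrt{2F(u)M}\le M\sqrt{2/\nu_i}.
\]
At $|s|=s_*$ this yields
\[
\kappa_i+\nu_i\psi(s_*)\le M+\nu_i s_*M\sqrt{2/\nu_i}+\nu_iMs_*^2\le M\bigl(1+\tfrac{\sqrt2}{4}+\tfrac1{16}\bigr)<2M,
\]
which contradicts the definition of $s_*$. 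Hence $\psi''\le2M$ on the whole interval $|t|\le1/(4\sqrt{\nu_i})$.

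A second Taylor expansion with this curvature bound gives $f(u+te_i)\le f(u)+tg+\tfrac12(2M)t^2$, which is \eqref{eq:hessian-local-model}. The main obstacle is precisely the bootstrap step: the curvature bound depends on the unknown value $\psi$ along the segment. The resolution is to control $|g|$ through the gap rather than treating it as free, via \eqref{eq:block-gradient-app}. The numerical margin $1+\sqrt2/4+1/16<2$ is what fixes the constant $1/4$ in the radius.
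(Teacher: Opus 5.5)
Your proof is correct. It follows the paper's skeleton: the singleton case of the self-bound \eqref{eq:block-gradient-app}, then control of $\kappa_i+\nu_iF$ along the segment so that the curvature stays below $2M$, then a final integration.

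The difference is in the middle step. The paper shifts to $\psi=F(u+te_i)+\kappa_i/\nu_i\ge0$. The hypothesis then becomes the linear differential inequality $\psi''\le\nu_i\psi$, with $|\psi'(0)|\le\sqrt{2\nu_i}\,\psi(0)$. Comparing with the solution of $y''=\nu_iy$ gives the explicit envelope $\psi(t)\le\psi(0)\bigl(\cosh(\sqrt{\nu_i}|t|)+\sqrt2\sinh(\sqrt{\nu_i}|t|)\bigr)$ at every $t$. You instead run a first-exit argument at the level $2M$, using only Taylor's formula with the constant curvature bound $2M$ on the pre-exit interval.

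Your route is more elementary. It needs no comparison principle for a second-order differential inequality, which the paper invokes by name without proof and which rests on a variation-of-constants step. The degenerate case $M=0$ is also absorbed by the same limiting device.

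The paper's route buys two things. It gives a slightly sharper constant: $\cosh\tfrac14+\sqrt2\sinh\tfrac14\approx1.389$, against your $1+\sqrt2/4+1/16\approx1.416$, both below $2$. It also gives a growth profile for $F$ valid at every radius, rather than a certificate for the single threshold your bootstrap was set up to check.
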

\begin{proof}
The block descent self-bound of Appendix~\ref{app:acc-proof} gives
$|\partial_if(u)|^2\le2F(u)(\kappa_i+\nu_iF(u))$. For $\nu_i>0$ let
$\psi(t)=F(u+te_i)+\kappa_i/\nu_i$; then $\psi\ge0$, $\psi''\le\nu_i\psi$ and
$|\psi'(0)|\le\sqrt{2\nu_i}\,\psi(0)$. Integral comparison with the solution of $y''=\nu_iy$ on
either side of zero gives
$\psi(t)\le\psi(0)(\cosh(\sqrt{\nu_i}|t|)+\sqrt2\sinh(\sqrt{\nu_i}|t|))<2\psi(0)$ for
$0<|t|\le1/(4\sqrt{\nu_i})$ (non-strict if $\psi(0)=0$). Integrating
$\partial_{ii}^2f(u+te_i)\le2(\kappa_i+\nu_iF(u))$ proves the model, and since the constant is
$\cosh\tfrac14+\sqrt2\sinh\tfrac14=1.389$ the factor $2$ could be replaced by $1.39$; $\nu_i=0$ is the global
one-dimensional smoothness inequality.
\end{proof}
This makes the comparison with coordinate gap-dependent assumptions explicit: only absolute
curvature and radius factors change, and a coordinate condition is still not identified with a
full-gradient one.

\subsection{SE constants and signed rows}
For $A\ge0$, using $A_{ij}^2\le\rho_iA_{ij}$,
\begin{equation}\label{eq:se-l01-coordinate}
\partial_{ii}^2\phi_\se=\gamma^{-1}\sum_jA_{ij}^2x_j\le\frac{\rho_i}{\gamma}(b_i+\partial_i\phi_\se)
\le\frac{\rho_ib_i}{\gamma}+\frac{\rho_i}{\gamma}|\partial_i\phi_\se|,
\end{equation}
so $L_{0,i}=\rho_ib_i/\gamma$ and $L_{1,i}=\rho_i/\gamma$: the baseline combines the target
constraint value with the largest row coefficient, and $L_{1,i}^{-1}$ is the coordinate
displacement over which factor weights change substantially. The same holds for a nonpositive row
after flipping signs, but not for a row with both signs. For arbitrary signs the mass identity
$q^\top\nabla\phi_\se=Z-1$ gives instead the valid global bound
\begin{equation}\label{eq:se-l01-global}
\|\nabla^2\phi_\se\|_2\le\gamma^{-1}\sum_jx_j\|a_j\|_2^2\le\frac{\rho^2}{\gamma}Z
\le\frac{\rho^2}{\gamma}+\frac{\rho^2\|q\|_2}{\gamma}\|\nabla\phi_\se\|_2 .
\end{equation}
These sign-free constants can be loose and do not imply \eqref{eq:lc-model}.

To see that the gap is real, take $\gamma=1$, $c=0$,
$A=\begin{psmallmatrix}1&1\\1&-1\end{psmallmatrix}$, $b=(1,0)^\top$, $q=(1,0)^\top$. The strictly
feasible primal point is $(1/2,1/2)$ and the SE dual is $f(u,v)=e^{u+v}+e^{u-v}-u$ with minimizer
$(-\log2,0)$. At $v=0$ we have $\partial_vf=0$ while $\partial^2_{vv}f=2e^u$, so no finite constants
satisfy \eqref{eq:lc-model} in that coordinate, yet \eqref{eq:se-l01-global} still holds. The
sign-free theorem of Section~\ref{sec:hg-short} therefore covers objectives that the strong
coordinate model excludes.

\section{Computable geometry and the balanced example}\label{app:geometry-proofs}
\subsection{Effective dimension and its crossover}\label{sec:geometry}
Summing the singleton envelopes of Theorem~\ref{thm:hg-short} at $\theta=2$ turns $(\kappa,\nu)$ into
one gap-dependent dimension $d_{\rm curv}(\Delta)$, interpolating from the effective rank
$\tr H^\star/\|H^\star\|_2$ at $\Delta=0$ to the row spread $d_A=\sum_i\rho_i^2/\rho^2$ as
$\Delta\to\infty$, with $\rho_i=\|A_{i:}\|_\infty$ and $\rho=\max_j\|a_j\|_2$. It is what decides
square-root sampling for a fixed Hessian, and measuring both ends calibrates the motivation for coordinate
methods: on our family the effective rank grows in proportion to $m$, so the
predicted gain is a constant $4.2$ rather than $\sqrt m$, and on transport it is $1.6$ to $3.4$ --
the instances with the least spectral asymmetry to exploit, which is part of why
Section~\ref{sec:exp-short} loses to Sinkhorn there. Appendix~\ref{app:geometry-proofs} carries the
proof, the crossover, the balanced family and the full measurement.

\subsection{The gap-dependent dimension, and what it measures}
The interpolation is exact: $d_{\rm curv}(\Delta)$ equals $\tr H^\star/\|H^\star\|_2$ at $\Delta=0$,
tends to $d_A$ as $\Delta\to\infty$, exchanges the two at
$\Delta_{\rm cross}=\gamma^2\log2\,\|H^\star\|_2/\rho^2$, and has both endpoints in $[1,m]$. The
matching statement about the method is that $\nu_B\equiv0$ gives $C_1=0$ and
$N=\widetilde O(1+rC_0/\sqrt{\delta\eps})$, exactly the accelerated coordinate bound of
\citet{allenzhu2016,nesterovstich2017}: generalized smoothness recovers the classical rate when its
growth channel vanishes.

Measured directly on the generated instances, on the
synthetic family $d_H$ grows in proportion to $m$, from $5.7$ at $m=100$ to $57.9$ at $m=1000$, so
$\sqrt{m/d_H}$ is a constant $4.2$ across the sweep; it moves with density instead, from $2.4$ at
$\bar d=10$ to $8.3$ at $\bar d=300$. On DOTmark $d_A=m/2$ exactly, since an incidence matrix has
$\rho_i=1$ and $\rho=\sqrt2$, while $d_H$ ranges over $175$ to $821$, giving $1.6$ to $3.4$.
$\Delta_{\rm cross}$ is $2\cdot10^{-4}$ to $3\cdot10^{-3}$ on the synthetic family, far below the gaps
the runs traverse, so the method spends its time at the $d_A$ end.

\subsection{The gap-dependent dimension}
With $\rho_i=\|A_{i:}\|_\infty$, $\rho=\max_j\|a_j\|_2$ and $\theta=2$ in
Theorem~\ref{thm:hg-short}, the singleton envelopes on $\{F\le\Delta\}$ are
\begin{equation}\label{eq:canonical-envelopes}
\ell_i(\Delta)=2\log2\,H^\star_{ii}+2\Delta\rho_i^2/\gamma^2,\qquad
\ell_{\rm full}(\Delta)=2\log2\,\|H^\star\|_2+2\Delta\rho^2/\gamma^2 .
\end{equation}

\begin{proposition}[Effective dimension and its crossover]\label{prop:crossover}
Assume $H^\star\ne0$ and define
\begin{equation}\label{eq:dcurv-short}
d_{\rm curv}(\Delta):=\frac{\sum_i\ell_i(\Delta)}{\ell_{\rm full}(\Delta)},\qquad
d_H:=\frac{\tr H^\star}{\|H^\star\|_2},\qquad
d_A:=\frac{\sum_i\rho_i^2}{\rho^2}.
\end{equation}
With $\Delta_{\rm cross}:=\gamma^2\log2\,\|H^\star\|_2/\rho^2$,
\begin{equation}\label{eq:dcurv-interp-short}
d_{\rm curv}(\Delta)=
\frac{\Delta_{\rm cross}}{\Delta_{\rm cross}+\Delta}\,d_H+
\frac{\Delta}{\Delta_{\rm cross}+\Delta}\,d_A .
\end{equation}
Both $d_H$ and $d_A$ lie in $[1,m]$, so $d_{\rm curv}(0)=d_H$ and
$d_{\rm curv}(\Delta)\to d_A$ as $\Delta\to\infty$.
\end{proposition}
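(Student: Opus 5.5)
The plan is to compute $\sum_i\ell_i(\Delta)$ and $\ell_{\rm full}(\Delta)$ explicitly from \eqref{eq:canonical-envelopes} and rewrite the ratio as a convex combination. Summing the singleton envelopes gives
\[
\sum_i\ell_i(\Delta)=2\log2\,\tr H^\star+\frac{2\Delta}{\gamma^2}\sum_i\rho_i^2
=2\log2\,\|H^\star\|_2\,d_H+\frac{2\Delta\rho^2}{\gamma^2}\,d_A ,
\]
using only the definitions of $d_H$ and $d_A$. Similarly $\ell_{\rm full}(\Delta)=2\log2\,\|H^\star\|_2+2\Delta\rho^2/\gamma^2$. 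Dividing numerator and denominator by $2\rho^2/\gamma^2$ turns $2\log2\,\|H^\star\|_2$ into exactly $\Delta_{\rm cross}=\gamma^2\log2\,\|H^\star\|_2/\rho^2$. The ratio is then $(\Delta_{\rm cross}d_H+\Delta d_A)/(\Delta_{\rm cross}+\Delta)$, which is \eqref{eq:dcurv-interp-short}. Here $H^\star\ne0$ ensures $\Delta_{\rm cross}>0$, so the denominator is positive for every $\Delta\ge0$. We also need $\rho>0$, which follows from $H^\star\ne0$ since $H^\star=\gamma^{-1}\sum_jx_j^\star a_ja_j^\top$.

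The endpoint claims follow at once. At $\Delta=0$ the weight on $d_A$ vanishes, so $d_{\rm curv}(0)=d_H$. As $\Delta\to\infty$ the weight on $d_H$ tends to zero, so $d_{\rm curv}(\Delta)\to d_A$.

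It remains to show that both $d_H$ and $d_A$ lie in $[1,m]$.

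\emph{The bounds on $d_H$.} Since $H^\star$ is PSD and nonzero, $\|H^\star\|_2\le\tr H^\star\le m\|H^\star\|_2$, which gives $1\le d_H\le m$.

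\emph{The upper bound on $d_A$.} For each $i$ pick $j$ attaining $|A_{ij}|=\rho_i$. Then $\rho_i\le\|a_j\|_2\le\rho$, so $\sum_i\rho_i^2\le m\rho^2$ and $d_A\le m$.

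\emph{The lower bound $d_A\ge1$.} This is the step that needs a little care. Take a column $j$ achieving $\rho=\|a_j\|_2$. Then
\[
\rho^2=\sum_iA_{ij}^2\le\sum_i\rho_i^2,
\]
because $|A_{ij}|\le\rho_i$ for every $i$. Hence $d_A\ge1$.

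No earlier result is needed beyond the envelope formulas, which themselves come from Theorem~\ref{thm:hg-short} at $\theta=2$. The main obstacle, such as it is, is bookkeeping: confirming that the factor $2\log2$ and the factor $2/\gamma^2$ combine into exactly the stated $\Delta_{\rm cross}$, with no stray factor of $2$.
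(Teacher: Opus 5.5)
Your proposal is correct and takes essentially the paper's route: the identity comes from summing the canonical envelopes (with $\sum_iH^\star_{ii}=\tr H^\star$, and dividing by $2\rho^2/\gamma^2$ to produce $\Delta_{\rm cross}$), and $d_H,d_A\in[1,m]$ follows from the same eigenvalue bounds and column-norm versus row-maximum comparisons. Your explicit remark that $H^\star\ne0$ forces $\rho>0$ and $\Delta_{\rm cross}>0$, so every ratio and weight is well defined, is a small point the paper leaves implicit.
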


\noindent The identity follows by summing \eqref{eq:canonical-envelopes}. Above
$\Delta_{\rm cross}$ the relevant dimension is $d_A$, which asks how the largest row coefficients
are spread across factors; below it $d_H$ takes over. A balanced incidence family on which
$d_H=d_A=m/s$ collapses the interpolation is given in Proposition~\ref{prop:balanced-family}.

\subsection{Remarks on the two dimensions}
First, the range claimed in Proposition~\ref{prop:crossover}. For a nonzero PSD matrix,
$\|H^\star\|_2\le\tr H^\star\le m\|H^\star\|_2$, giving $d_H\in[1,m]$; and since
$\rho^2=\max_j\|a_j\|_2^2\ge\max_{i,j}A_{ij}^2$ while
$\sum_i\rho_i^2=\sum_i\max_jA_{ij}^2\ge\max_j\|a_j\|_2^2$, we get
$\rho^2\le\sum_i\rho_i^2\le m\rho^2$ and hence $d_A\in[1,m]$. A convex combination of the two then
lies in $[1,m]$ as well. Three further points deferred from Section~\ref{sec:geometry}. First, $d_A$ is not invariant under a change
of dual coordinates, whereas $d_H$ is invariant under orthogonal ones (a non-orthogonal rescaling
changes $\tr H^\star/\|H^\star\|_2$ as well): the quantity $\sum_i\rho_i^2$ depends on the basis in which
the rows are expressed, so both the coordinate system and the data-access costs matter when reading
$d_{\rm curv}$. Second, the two terms of $\ell_i$ scale differently in the regularization, as
$\gamma^{-1}$ for the baseline and $\gamma^{-2}$ for the growth coefficient; since $H^\star$ also
depends on $\gamma$, individual entries need not be monotone in $\gamma$ and the crossover level
$\Delta_{\rm cross}$ moves with it. Third, \eqref{eq:dcurv-interp-short} is stated for singleton
coordinates on purpose: block operator norms do not sum to a trace, so while
Theorem~\ref{thm:hg-short} and the algorithm remain valid for any partition, the literal
effective-rank identity does not survive blocking. The cost-weighted ratio below is the quantity
that does generalize.

\begin{corollary}[Exact constants for scaled indicator rows]\label{cor:indicator-exact}
If $A_{ij}\in\{0,\rho_i\}$ with $\rho_i>0$, then $A_{ij}^2=\rho_iA_{ij}$, so
$H^\star_{ii}=\rho_i\sum_jA_{ij}x^\star_j/\gamma=\rho_ib_i/\gamma$ and
$\kappa_i=(2\log2/\gamma)\rho_ib_i$ uses the exact diagonal of $H^\star$ with no knowledge of
$x^\star$. Incidence matrices, and hence optimal transport, are of this form.
\end{corollary}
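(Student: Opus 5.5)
The statement to prove is Corollary~\ref{cor:indicator-exact}, and the plan is a direct computation from the moment form of the SE Hessian at a solution. The only inputs are the diagonal formula $\partial^2_{ii}\phi_\se(\lambda)=\gamma^{-1}\sum_jA_{ij}^2x_j(\lambda)$ (Appendix~\ref{app:duals}, or the quadratic form in the proof of Theorem~\ref{thm:hg-short} at a singleton block and $t_j\equiv0$) and primal feasibility $Ax^\star=b$. No bound on $x^\star$ beyond feasibility is needed.

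First, record the pointwise identity $A_{ij}^2=\rho_iA_{ij}$. It holds because each entry is either $0$, where both sides vanish, or $\rho_i$, where both sides equal $\rho_i^2$. No sign issue arises since $\rho_i>0$, so the row is nonnegative.

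Second, evaluate the diagonal Hessian at any SE minimizer. By uniqueness of $x^\star$ (Section~\ref{sec:hg-short}) the response there is $x^\star$, so
\[
H^\star_{ii}=\frac1\gamma\sum_jA_{ij}^2x^\star_j=\frac{\rho_i}{\gamma}\sum_jA_{ij}x^\star_j=\frac{\rho_ib_i}{\gamma},
\]
where the last step is the $i$th row of $Ax^\star=b$. Since $H^\star_{ii}$ is the singleton block $H^\star_{\{i\}\{i\}}$, the choice $\bar H_i=H^\star_{ii}$ in Theorem~\ref{thm:hg-short} with $\theta=2$ gives $\kappa_i=2\log2\,\rho_ib_i/\gamma$. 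This is the exact diagonal, and it is computed from $A$, $b$, $\gamma$ alone.

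Third, confirm the incidence claim. In transport, each column $a_{(k,l)}$ has a $1$ in source row $k$ and target row $l$, and zeros elsewhere. Each row therefore takes values in $\{0,1\}$, which is the case $\rho_i=1$.

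There is no real obstacle. The only point needing care is that $H^\star$ is well defined even though dual minimizers are not unique, and that is settled by the uniqueness of $x^\star$ already noted in the paper.
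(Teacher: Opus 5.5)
Your proposal is correct and follows the paper's own argument: the identity $A_{ij}^2=\rho_iA_{ij}$ reduces the row's second moment against $x^\star$ to its first moment, and primal feasibility pins that moment to $b_i$. From there, taking $\bar H_i=H^\star_{ii}$ in the singleton case of Theorem~\ref{thm:hg-short} gives $\kappa_i$, and your handling of the incidence case ($\rho_i=1$) and of the well-definedness of $H^\star$ via uniqueness of $x^\star$ also matches the paper.
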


\subsection{Cost-weighted work ratio}
For a convex quadratic with Hessian $H$ the coordinate curvatures are $L_i=H_{ii}$ and the
full-gradient curvature is $L=\|H\|_2$, so square-root importance sampling \citep{allenzhu2016}
compares $\sum_i\sqrt{H_{ii}}$ against $m\sqrt{\|H\|_2}$:
\begin{equation}\label{eq:classical-rank-ratio}
\frac{\sum_i\sqrt{H_{ii}}}{m\sqrt{\|H\|_2}}\le\sqrt{\frac{d_H}{m}},\qquad
d_H:=\frac{\tr H}{\|H\|_2},
\end{equation}
which is the classical effective-rank criterion that Section~\ref{sec:geometry} generalizes.
With $d_i=1+\nnz(A_{i:})$ and $d_{\rm full}=\sum_id_i$ as the nominal full-pass cost, define
\begin{equation}\label{eq:work-ratio-geometry}
\mathcal R_{\rm work}(\Delta):=\frac{\sum_id_i\sqrt{\ell_i(\Delta)}}{d_{\rm full}\sqrt{\ell_{\rm full}(\Delta)}} .
\end{equation}
For equal $d_i$, \eqref{eq:classical-rank-ratio} gives
$\mathcal R_{\rm work}(\Delta)\le\sqrt{d_{\rm curv}(\Delta)/m}$; for heterogeneous costs it also
measures whether the stiff coordinates are the expensive ones. Replacing $H^\star_{ii}$ and
$\|H^\star\|_2$ by chosen upper bounds gives the \emph{implemented} comparison, and a ratio of
loose bounds is not an intrinsic dimension.

\begin{proposition}[Accuracy and growth contributions]\label{prop:work-coefficients}
Choose valid singleton constants $(\kappa_i,\nu_i)$ and full-block constants
$(\kappa_{\rm f},\nu_{\rm f})$ with $\kappa_{\rm f},\nu_{\rm f}>0$, and set
$G_0:=S_0/(d_{\rm full}\sqrt{\kappa_{\rm f}})$ and $G_1:=S_1/(d_{\rm full}\sqrt{\nu_{\rm f}})$. For
$r,\eps>0$ and the same start, radius, gap bound, target and confidence, the ratio of the two
leading work expressions of Theorem~\ref{thm:work-short} and its one-block specialization is
\begin{equation}\label{eq:two-channel-work}
\frac{rS_0/\sqrt\eps+rS_1}{rd_{\rm full}(\sqrt{\kappa_{\rm f}}/\sqrt\eps+\sqrt{\nu_{\rm f}})}
=\omega_\eps G_0+(1-\omega_\eps)G_1,\qquad
\omega_\eps:=\frac{\sqrt{\kappa_{\rm f}}/\sqrt\eps}{\sqrt{\kappa_{\rm f}}/\sqrt\eps+\sqrt{\nu_{\rm f}}} .
\end{equation}
If all growth coefficients vanish, only $G_0$ remains.
\end{proposition}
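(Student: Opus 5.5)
This is direct algebra on the two leading work expressions of Theorem~\ref{thm:work-short}. The blocked expression is $rS_0/\sqrt\eps+rS_1$. For the one-block specialization there is a single block, with $d_B=d_{\rm full}$ and constants $(\kappa_{\rm f},\nu_{\rm f})$, so $S_0=d_{\rm full}\sqrt{\kappa_{\rm f}}$ and $S_1=d_{\rm full}\sqrt{\nu_{\rm f}}$. Its leading work is therefore $rd_{\rm full}(\sqrt{\kappa_{\rm f}}/\sqrt\eps+\sqrt{\nu_{\rm f}})$, which is the denominator in \eqref{eq:two-channel-work}. The hypotheses $r,\eps,\kappa_{\rm f},\nu_{\rm f}>0$ make this denominator strictly positive, so the ratio is well defined.

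Next, cancel the common factor $r$ and abbreviate $D_0:=d_{\rm full}\sqrt{\kappa_{\rm f}}/\sqrt\eps$ and $D_1:=d_{\rm full}\sqrt{\nu_{\rm f}}$. By the definitions of $G_0$ and $G_1$,
\[
\frac{S_0}{\sqrt\eps}=G_0D_0,\qquad S_1=G_1D_1 .
\]
Hence the ratio equals
\[
\frac{G_0D_0+G_1D_1}{D_0+D_1}.
\]
Factoring $d_{\rm full}$ out of numerator and denominator shows that the weight $D_0/(D_0+D_1)$ is exactly $\omega_\eps$. The weight $D_1/(D_0+D_1)$ is then $1-\omega_\eps$, which proves the identity.

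For the final sentence, the case where all growth coefficients vanish: then $S_1=0$, and also $\nu_{\rm f}=0$. The latter is outside the stated positivity assumption, so I would treat this case as the limit $\nu_{\rm f}\downarrow0$, or equivalently re-run the computation with $C_1=0$ as in the proof of Theorem~\ref{thm:work-short}. In either reading $\omega_\eps=1$ and only $G_0$ remains.

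I expect no real obstacle. The only care needed is to make explicit that ``leading work expression'' means the two $r$-dependent terms, excluding the $S_0/C_0$ and $\chi S_1$ transients. One should also note that the same start, radius and confidence make the hidden logarithmic factors common, so they cancel in the ratio.
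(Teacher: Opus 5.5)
Your argument is correct and matches the paper's, which also calls the identity purely algebraic. Reading the one-block specialization of Theorem~\ref{thm:work-short} with $S_0=d_{\rm full}\sqrt{\kappa_{\rm f}}$ and $S_1=d_{\rm full}\sqrt{\nu_{\rm f}}$ turns the ratio into the convex combination with weight $\omega_\eps$.

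One correction to your closing remark: the hidden logarithmic factors do not in general cancel. They depend on $C_0$ and $C_1$, through the overshoot $1/C_0$ in $s_N$ and through $\vartheta_0$, and these differ between the two partitions. The paper explicitly does not claim that the ratio of the full bounds equals \eqref{eq:two-channel-work}, and the identity does not need it.
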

The identity is algebraic; that these expressions occur in valid bounds follows by applying
Theorem~\ref{thm:work-short} to the two partitions. Logarithms, initialization, output and additive
sampling terms stay in the full bounds and their ratio is not claimed to equal
\eqref{eq:two-channel-work}. This is neither a runtime lower bound nor a claim of optimality among
sampling laws.

\subsection{The balanced incidence family}
\begin{proposition}[Balanced incidence family]\label{prop:balanced-family}
Let $A\in\{0,1\}^{m\times n}$ have exactly $s_{\rm col}$ ones in every column, let $b=(s_{\rm col}/m)\mathbf1$, and
assume strict feasibility. Then $q=\mathbf1/s_{\rm col}$ certifies the redundant mass equation,
$H^\star_{ii}=s_{\rm col}/(m\gamma)$, $\|H^\star\|_2=s_{\rm col}^2/(m\gamma)$, and
\begin{equation}\label{eq:balanced-geometry}
d_H=d_A=d_{\rm curv}(\Delta)=m/s_{\rm col}\qquad(\Delta\ge0),
\end{equation}
with the bounds of \eqref{eq:h-overlap-short} exact. If row degrees are equal, the cost-weighted
ratio $\mathcal R_{\rm work}$ of \eqref{eq:work-ratio-geometry} equals $1/\sqrt{s_{\rm col}}$ at every gap
level, so a factor coupling $s_{\rm col}$ constraints separates the coordinate and one-block \emph{leading
bounds} by $\sqrt{s_{\rm col}}$; when $s_{\rm col}=O(1)$, sparsity alone delivers no large spectral gain. This compares
upper bounds for one method under two partitions, not a lower bound for any competitor.
\end{proposition}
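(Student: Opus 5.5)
The plan is to verify each claim directly from the structure $A\in\{0,1\}^{m\times n}$ with exactly $s_{\rm col}$ ones per column. The steps run in order: the certificate, the diagonal of $H^\star$, its operator norm, the two dimensions, and the work ratio. Everything except the operator norm is bookkeeping on top of Corollary~\ref{cor:indicator-exact} and Proposition~\ref{prop:crossover}.

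\emph{Certificate and diagonal.} Since every column has exactly $s_{\rm col}$ ones, $A^\top\mathbf1=s_{\rm col}\mathbf1$, so $q=\mathbf1/s_{\rm col}$ gives $A^\top q=\mathbf1$. Moreover $\ip{q}{b}=\frac{1}{s_{\rm col}}\cdot m\cdot\frac{s_{\rm col}}{m}=1$, which is Assumption~\ref{ass:primal} together with the assumed strict feasibility. Each row has $b_i>0$, so feasibility forces each row to contain at least one $1$, hence $\rho_i=\|A_{i:}\|_\infty=1$. Because $A$ has indicator rows, Corollary~\ref{cor:indicator-exact} gives $H^\star_{ii}=\rho_ib_i/\gamma=s_{\rm col}/(m\gamma)$. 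The singleton bound of \eqref{eq:h-overlap-short} is $\|A_{i:}\|_\infty b_i/\gamma$, which is the same number, so those bounds are exact.

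\emph{Operator norm (the only nontrivial step).} Write $H^\star=\gamma^{-1}A\,\Diag(x^\star)A^\top$ and test it on $\mathbf1$. Using $A^\top\mathbf1=s_{\rm col}\mathbf1$ and $Ax^\star=b$,
\[
H^\star\mathbf1=\gamma^{-1}A\,\Diag(x^\star)\,s_{\rm col}\mathbf1=\gamma^{-1}s_{\rm col}\,b=\frac{s_{\rm col}^2}{m\gamma}\mathbf1 .
\]
So $\mathbf1$ is an eigenvector with eigenvalue $s_{\rm col}^2/(m\gamma)$, which gives the lower bound on $\|H^\star\|_2$. For the matching upper bound:
\begin{itemize}
\item $H^\star$ is entrywise nonnegative, since $A\ge0$ and $x^\star>0$.
\item Every row sum of $H^\star$ equals that same eigenvalue, because the rows of $H^\star$ sum according to $H^\star\mathbf1$.
\item $H^\star$ is symmetric, so $\|H^\star\|_2\le\|H^\star\|_\infty$, the maximum absolute row sum.
\end{itemize}
Hence $\|H^\star\|_2=s_{\rm col}^2/(m\gamma)$; equivalently, this is Perron--Frobenius with a positive eigenvector. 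This is where I expect the care to be needed. The point is that nonnegativity makes the absolute row sums equal to the ordinary row sums, which we know exactly.

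\emph{Dimensions and work ratio.} Two quotients follow immediately:
\begin{itemize}
\item $\tr H^\star=m\cdot s_{\rm col}/(m\gamma)=s_{\rm col}/\gamma$, so $d_H=m/s_{\rm col}$.
\item $\rho^2=\max_j\|a_j\|_2^2=s_{\rm col}$ and $\sum_i\rho_i^2=m$, so $d_A=m/s_{\rm col}$.
\end{itemize}
Since $d_H=d_A$, the interpolation \eqref{eq:dcurv-interp-short} is a convex combination of two equal numbers, so $d_{\rm curv}(\Delta)=m/s_{\rm col}$ for every $\Delta\ge0$.

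For $\mathcal R_{\rm work}$, plug into \eqref{eq:canonical-envelopes}:
\[
\ell_i(\Delta)=2\log2\,\frac{s_{\rm col}}{m\gamma}+\frac{2\Delta}{\gamma^2},
\qquad
\ell_{\rm full}(\Delta)=2\log2\,\frac{s_{\rm col}^2}{m\gamma}+\frac{2\Delta s_{\rm col}}{\gamma^2}=s_{\rm col}\,\ell_i(\Delta).
\]
With equal row degrees the $d_i$ are all equal, and $d_{\rm full}=md_i$. Substituting into \eqref{eq:work-ratio-geometry} gives $\mathcal R_{\rm work}=\sqrt{\ell_i}/\sqrt{s_{\rm col}\ell_i}=1/\sqrt{s_{\rm col}}$ at every gap level. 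The interpretive remarks in the statement about the $\sqrt{s_{\rm col}}$ separation then need no proof beyond this identity.
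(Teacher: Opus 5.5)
Your argument follows the paper's proof step for step: the certificate from $A^\top\mathbf1=s_{\rm col}\mathbf1$, the diagonal from indicator rows, $H^\star\mathbf1=(s_{\rm col}/\gamma)b$ together with symmetry and entrywise nonnegativity for the operator norm, trace and row spread for $d_H=d_A$, and $\ell_{\rm full}=s_{\rm col}\,\ell_i$ for $\mathcal R_{\rm work}$. All of this is correct.

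The one omission is the one-block instance of \eqref{eq:h-overlap-short}. The claim that ``the bounds'' are exact covers it, and the paper checks it. For $B=\{1,\dots,m\}$ every $\omega_{Bj}=s_{\rm col}$, so the overlap term is $s_{\rm col}\max_ib_i/\gamma=s_{\rm col}^2/(m\gamma)=\|H^\star\|_2$. It is the smaller entry of the minimum, since $\rho_B^2/\gamma=s_{\rm col}/\gamma$ and $s_{\rm col}\le m$.

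Relatedly, your remark that the $\sqrt{s_{\rm col}}$ separation ``needs no proof beyond this identity'' skips a link. $\mathcal R_{\rm work}$ is defined through $H^\star$ and puts $\kappa$ and $\nu\Delta$ under one square root. The leading bounds $rS_0/\sqrt\eps+rS_1$ of Theorem~\ref{thm:work-short}, by contrast, are written in the constants the method actually uses and split the two channels. The paper closes this by noting that the identity $\ell_{\rm full}=s_{\rm col}\ell_i$ holds coefficientwise. With both overlap bounds exact, the computable constants therefore satisfy $\kappa_{\rm f}=s_{\rm col}\kappa_i$ and $\nu_{\rm f}=s_{\rm col}\nu_i$, which gives $G_0=G_1=1/\sqrt{s_{\rm col}}$ in Proposition~\ref{prop:work-coefficients}. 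Had you used only the universal full-block bound $s_{\rm col}/\gamma$ for the one-block partition, the accuracy channel would give $G_0=1/\sqrt m$ instead. Both additions are one line each.
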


For a binary column with $s_{\rm col}$ ones, $\mathbf1^\top a_j=s_{\rm col}$ and $\|a_j\|_2^2=s_{\rm col}$, so $q=\mathbf1/s_{\rm col}$
satisfies $A^\top q=\mathbf1$ and $q^\top b=1$; strict feasibility makes every row nonempty. Binary
entries give $H^\star_{ii}=\gamma^{-1}\sum_jA_{ij}x_j^\star=b_i/\gamma=s_{\rm col}/(m\gamma)$, and
\[
H^\star\mathbf1=\gamma^{-1}\sum_jx_j^\star a_j(a_j^\top\mathbf1)=(s_{\rm col}/\gamma)b=(s_{\rm col}^2/(m\gamma))\mathbf1 .
\]
Every row sum of the symmetric nonnegative matrix $H^\star$ equals $s_{\rm col}^2/(m\gamma)$, which is
therefore an eigenvalue and bounds every eigenvalue in absolute value, so
$\|H^\star\|_2=s_{\rm col}^2/(m\gamma)$. The trace is $s_{\rm col}/\gamma$, giving $d_H=m/s_{\rm col}$; since $\rho_i=1$ and
$\rho^2=s_{\rm col}$, also $d_A=m/s_{\rm col}$. The singleton overlap bound equals $b_i/\gamma=H^\star_{ii}$, and for
the full block $\omega_{Bj}=s_{\rm col}$ gives $s_{\rm col}\max_ib_i/\gamma=s_{\rm col}^2/(m\gamma)=\|H^\star\|_2$: both baselines
are available without spectral information about the unknown optimizer. For every gap level
$\ell_{\rm full}(\Delta)=s_{\rm col}\,\ell_i(\Delta)$, and with equal row degrees $d_i=d$, $d_{\rm full}=md$,
substitution gives $\mathcal R_{\rm work}=1/\sqrt{s_{\rm col}}$. The implemented constants are
\[
\kappa_i=2\log2\,s_{\rm col}/(m\gamma),\quad \nu_i=2/\gamma^2,\qquad
\kappa_{\rm f}=2\log2\,s_{\rm col}^2/(m\gamma),\quad \nu_{\rm f}=2s_{\rm col}/\gamma^2,
\]
so $G_0=G_1=1/\sqrt{s_{\rm col}}$ in Proposition~\ref{prop:work-coefficients}. Initialization and finite phase
overhead do not scale by this factor and remain in Theorem~\ref{thm:work-short}.

\subsection{A fully specified construction}
Let $n=m$, index rows and columns modulo $m$, and set
\begin{equation}\label{eq:cyclic-incidence}
A_{ij}=\mathbf1\{(i-j)\bmod m\in\{0,\dots,s_{\rm col}-1\}\} .
\end{equation}
Every row and column has $s_{\rm col}$ ones and $x^\star=\mathbf1/m$ gives $b=(s_{\rm col}/m)\mathbf1$. For any fixed
finite $\lambda^\star$, choosing $c=A^\top\lambda^\star-\gamma\log x^\star$ makes the KKT equations
hold at that optimizer. Repeating each column equally often gives larger $n$ with the same formulas
after splitting the primal mass. Taking $\gcd(m,s)=1$ and $s<m$ gives a full-rank square
construction when that is useful, though full rank is not needed for
Proposition~\ref{prop:balanced-family}.

\subsection{A polynomial gap between the two accountings}\label{app:heterogeneous-cost}
Let $t\ge2$ be an integer, $m=2$, $n=t^2+1$ and $\gamma=1$. The first $t^2$ columns of $A$ equal
$e_1$ and the last equals $e_2$. Set
\[
b=(t^{-4},1-t^{-4})^\top,\qquad x_j^\star=t^{-6}\ (j\le t^2),\qquad x_n^\star=1-t^{-4},\qquad c=-\log x^\star .
\]
Then $q=(1,1)^\top$ certifies total mass, $x^\star>0$ is feasible, and $\lambda^\star=0$ is an SE
minimizer. From the common start $z=(1,1)^\top$ the supplied radius is $r=\sqrt2$ and $F(z)=e-2$,
both independent of $t$. The exact singleton constants and costs are
$\kappa_i=2\log2\,b_i$, $\nu_i=2$, $d_1=t^2+1$, $d_2=2$, so
$C_0=\Theta(1)$, $C_1=\Theta(1)$, $S_0=\Theta(1)$ and $S_1=\Theta(t^2)$, whereas
$(\max_id_i)C_0=\Theta(t^2)$. At target $\eps=t^{-6}$, Theorem~\ref{thm:work-short} including
initialization and output gives $\E W_{\rm total}=\widetilde O(t^3)$, while multiplying the query
bound by the worst block cost gives only $\widetilde O(t^5)$; all transient and preparation terms
are $\widetilde O(t^2)$ and do not conceal the difference. This separates two \emph{accounting
rules for the same method}; it is not a lower bound on another algorithm, and the example has
simple structure that a specialized solver could exploit.

\subsection{Tightness of the subclass}
The range--moment bound need not be tight for signed rows. Append a mass row to $(-1,0,1)$ and set
$x^\star=(t,1-2t,t)$ with $0<t<1/2$. For the signed row $b_i=0$ and
$H^\star_{ii}=2t/\gamma$ while $\bar H^{\rm rng}_i=1/\gamma$, so the ratio $1/(2t)$ is
unbounded. Choosing $c=-\gamma\log x^\star$ makes $\lambda^\star=0$ an attained SE minimizer and all
assumptions hold. This is not a counterexample to the work bound; it shows why effective rank alone
cannot guarantee a comparable curvature sum in the implementation. More generally, if
$H^\star_{ii}>0$ and $\bar H_i\le\varkappa H^\star_{ii}$ for all active coordinates with
$\varkappa\ge1$, then $2\log2\,\bar H_i+\nu_i\Delta\le\varkappa\,\ell_i(\Delta)$, so against the
full envelope the weighted numerator loses at most $\sqrt\varkappa$. The indicator-row
corollary has $\varkappa=1$; no uniform finite $\varkappa$ holds for arbitrary signed data.

\section{Initial gap and distance bounds}\label{app:input-certificates}

\paragraph{Obtaining $r$ and $\bar\Delta$.}
Neither input is free. A strictly feasible $\bar x>0$ gives $\bar\Delta=\phi_\se(z)+P(\bar x)$ by
weak duality, $P$ being the objective of \eqref{eq:primal-short}. The radius is harder:
Proposition~\ref{prop:input-cert} derives one from $\bar x$ and a lower bound on the smallest
nonzero singular value of $A$, but it is conservative (Section~\ref{sec:limits}), and obtaining both inputs
is preprocessing not counted in $W_{\rm init}$.
Let $P(x)=\ip{c}{x}+\gamma\sum_jx_j(\log x_j-1)$ be the primal objective of
\eqref{eq:primal-short}. Producing a feasible point is additional information and the statement
below does not assume it comes for free.

\begin{proposition}[Bounds from strict feasibility]\label{prop:input-cert}
Suppose $\bar x>0$ with $A\bar x=b$ and $\mathbf1^\top\bar x=1$ is supplied. Define
\[
B_z:=\phi_\se(z)+P(\bar x),\qquad x_{\min}:=\min_j\bar x_j>0,\qquad \bar t_j:=1+\frac{B_z}{\gamma\bar x_j} .
\]
Then $B_z\ge F_\se(z)\ge0$, so $\bar\Delta=B_z$ is valid. Let
$0<\widehat\sigma\le\sigma_{\min}^+(A)$ be a verified lower bound on the smallest nonzero singular
value. There exists an SE minimizer $\lambda^\star$ with the same orthogonal projection onto
$\ker A^\top$ as $z$ and
\begin{equation}\label{eq:initial-radius-certificate}
\|z-\lambda^\star\|_2\le r_{\rm cert}:=\frac{2\gamma}{\widehat\sigma}\|\bar t\|_2
\le\frac{2\gamma\sqrt n}{\widehat\sigma}\Bigl(1+\frac{B_z}{\gamma x_{\min}}\Bigr).
\end{equation}
The first bound also controls the diameter of the initial SE sublevel restricted to that gauge
slice. If $B_z=0$, return $z$.
\end{proposition}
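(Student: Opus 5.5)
The plan is to prove the two claims separately: first the gap bound $B_z$, by weak duality, and then the radius, from one exact identity that rewrites $\phi_\se(\lambda)+P(\bar x)$ as a generalized Kullback--Leibler divergence centred at the supplied point $\bar x$.

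\emph{Gap bound.} By Proposition~\ref{prop:dual-short}, $\phi_\se^\star=\phi_\lse^\star+\gamma$. Since $\mathbf1^\top x=1$ on the feasible set, $P=f_s-\gamma$ there. Strong duality for the LSE pair gives $f_s^\star=-\phi_\lse^\star$, as in Appendix~\ref{app:duals}. Together these give $\phi_\se^\star+P^\star=0$, and hence
\[
B_z-F_\se(z)=\phi_\se^\star+P(\bar x)\ge\phi_\se^\star+P^\star=0 .
\]
$F_\se(z)\ge0$ is trivial, so $\bar\Delta=B_z$ is valid. If $B_z=0$, then $F_\se(z)=0$ and $z$ is already optimal, so we return it.

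\emph{Key identity.} Put $s_j(\lambda)=(a_j^\top\lambda-c_j)/\gamma$, so that $x_j(\lambda)=e^{s_j}$. Substitute $b=A\bar x$ into $-\ip{b}{\lambda}$ and collect terms factor by factor. This should give
\[
\phi_\se(\lambda)+P(\bar x)=\gamma\sum_j\bar x_j\,h(u_j),\qquad u_j:=s_j(\lambda)-\log\bar x_j,\quad h(u)=e^u-1-u .
\]
This is $\gamma\gKL(\bar x\|x(\lambda))$, the same Bregman form as \eqref{eq:entropy-gap-app}, but centred at the known point $\bar x$ instead of the unknown $x^\star$. I expect this computation to be the crux: it is the only place where $\bar x$ replaces $x^\star$, and it is what makes everything computable.

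\emph{Box bound on the sublevel set.} Consider the sublevel set $\{\phi_\se\le\phi_\se(z)\}$. Every term above is nonnegative, so each satisfies $\bar x_jh(u_j)\le B_z/\gamma$. Now use the elementary bound $h(u)\ge|u|-1$: for $u\le0$ it is $e^u\ge0$, and for $u\ge0$ it follows from $h(u)\ge u^2/2\ge u-\tfrac12$. This yields $|u_j|\le\bar t_j$. For two points $\lambda,\lambda'$ of the sublevel set, $u_j-u_j'=a_j^\top(\lambda-\lambda')/\gamma$, so $|a_j^\top(\lambda-\lambda')|\le2\gamma\bar t_j$ and therefore $\|A^\top(\lambda-\lambda')\|_2\le2\gamma\|\bar t\|_2$.

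\emph{Gauge step.} For $k\in\ker A^\top$ we have $\ip{b}{k}=\ip{A\bar x}{k}=0$, so $\phi_\se(\lambda+k)=\phi_\se(\lambda)$. Take any minimizer and shift it by the appropriate element of $\ker A^\top$ so that its projection onto $\ker A^\top$ equals that of $z$. The result is still a minimizer $\lambda^\star$, and it lies in the sublevel set. Now $z-\lambda^\star\perp\ker A^\top$, that is, $z-\lambda^\star\in\operatorname{range}A$. On that subspace $\|A^\top w\|_2\ge\sigma_{\min}^+(A)\|w\|_2\ge\widehat\sigma\|w\|_2$, which gives $\|z-\lambda^\star\|_2\le r_{\rm cert}$.

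The same argument applies to any two points of the sublevel set with equal kernel projection, and this gives the diameter statement. The final inequality in \eqref{eq:initial-radius-certificate} follows from $\|\bar t\|_2\le\sqrt n\max_j\bar t_j\le\sqrt n\bigl(1+B_z/(\gamma x_{\min})\bigr)$.
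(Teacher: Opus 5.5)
Your proof is correct and follows the paper's argument step for step: the same generalized-KL identity centred at $\bar x$, the same elementary bound $|u|\le1+h(u)$, and the same kernel shift followed by the singular-value bound on $\operatorname{range}A$. The only difference is minor. You obtain $B_z\ge F_\se(z)$ through strong duality and Proposition~\ref{prop:dual-short}, whereas weak duality already follows from your own identity, since $\phi_\se(\lambda^\star)+P(\bar x)=\gamma\sum_j\bar x_j\,h(u_j(\lambda^\star))\ge0$; this is presumably what the paper means by its weak-duality step.
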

\begin{proof}
For $t_j(\lambda)=(a_j^\top\lambda-c_j)/\gamma-\log\bar x_j$, feasibility of $\bar x$ gives the
exact identity
\begin{equation}\label{eq:feasible-reference-identity}
\phi_\se(\lambda)+P(\bar x)=\gamma\sum_j\bar x_j\,h(t_j(\lambda)),
\end{equation}
and weak duality gives $B_z\ge F_\se(z)$. On $\{\phi_\se(\lambda)\le\phi_\se(z)\}$ each nonnegative
summand is at most $B_z$. We have $|t|\le1+h(t)$: for $t<0$ from $h(t)\ge-t-1$,
and for $t\ge0$ from $h(t)\ge t^2/2$ and $\sqrt{2h}\le1+h$. Hence $|t_j(\lambda)|\le\bar t_j$ on the
sublevel.
The dual is invariant along $\ker A^\top$ because $b=A\bar x$ is orthogonal to that kernel, so shift
an attained minimizer inside the kernel to match the kernel component of $z$. Then
$z-\lambda^\star\in\operatorname{range}A$, where the singular-value bound applies, and both
endpoints lie in the initial sublevel:
\[
\widehat\sigma\|z-\lambda^\star\|_2\le\|A^\top(z-\lambda^\star)\|_2
=\gamma\|t(z)-t(\lambda^\star)\|_2\le2\gamma\|\bar t\|_2 .
\]
The same argument applies to any two sublevel points in the chosen gauge, and the second inequality
follows from $\bar x_j\ge x_{\min}$.
\end{proof}
This SE gauge removes only $\ker A^\top$; it is \emph{not} the larger LSE invariance space
$\mathcal G$, since $\phi_\se$ is not invariant in the mass direction $q$. No gauge projection is
needed in the sparse iterations; the theorem only needs a valid initial distance to some
minimizer.

\paragraph{A spectral bound for the cyclic family.}
For the square cyclic matrix \eqref{eq:cyclic-incidence}, the Fourier singular values are $s_{\rm col}$ and
$|\sin(\pi s_{\rm col}k/m)/\sin(\pi k/m)|$ for $k=1,\dots,m-1$. If $1\le s_{\rm col}<m$ and
$\gcd(m,s_{\rm col})=1$, then $s_{\rm col}k\not\equiv0$, every numerator is at least $\sin(\pi/m)$ in magnitude, and every denominator is at
most one, so $\widehat\sigma=\sin(\pi/m)$ is valid. The uniform feasible point has $x_{\min}=1/m$, giving
\[
r_{\rm cert}\le\frac{2\gamma\sqrt m}{\sin(\pi/m)}\Bigl(1+\frac{mB_z}{\gamma}\Bigr).
\]
This shows computability without a dense SVD; it is not a sharp practical radius, and on this
family it exceeds the true distance by a factor that grows with $m$; see
Section~\ref{sec:limits}. The known-solution protocol of Appendix~\ref{app:exp-full} instead
supplies the exact distance to the constructed minimizer.

\paragraph{Preprocessing.}
Given $\bar x$ and $\widehat\sigma$, forming these bounds costs one data pass and vector
operations. Finding $\bar x$ and verifying $\widehat\sigma$ can be considerably more expensive;
denote that work by $W_{\rm geometry}$ and add it to
$W_{\rm init}+W_{\rm opt}+W_{\rm output}$ when the inputs are not provided. A small feasibility
margin $x_{\min}$ or a small spectral gap makes \eqref{eq:initial-radius-certificate} very conservative:
a finite bound is not a guarantee of a well-conditioned instance.

\section{Fixed-support validation}\label{app:fixed-support-protocol}
This specifies \emph{additional validation}, not completed optimization benchmarks. The generator
below is deterministic and what it feeds are numerical checks of the algebraic identities; no new
convergence curves are produced.

Fix one binary matrix \eqref{eq:cyclic-incidence}, a $\gamma>0$ and an index $j_0$. For
$0\le\vartheta<1$ set
\begin{equation}\label{eq:fixed-support-generator}
x^\star(\vartheta)=(1-\vartheta)\mathbf1/m+\vartheta e_{j_0},\qquad b(\vartheta)=Ax^\star(\vartheta),\qquad
c(\vartheta)=-\gamma\log x^\star(\vartheta).
\end{equation}
The matrix and its support never change, strict positivity holds, and $\lambda^\star=0$ is an exact
SE minimizer throughout. Use a common nonoptimal start, for example $z=\gamma e_{i_0}$ on a nonempty
row, for both the singleton and one-block methods; the supplied radius $r=\|z\|_2=\gamma$ is then
valid for every $\vartheta$. A zero start would already solve the generated problem and must not be used.

For every $\vartheta$ the coordinate constant is exact by Corollary~\ref{cor:indicator-exact}, and at
$\vartheta=0$ the balanced formulas apply. As $\vartheta\uparrow1$,
\[
H^\star(\vartheta)\longrightarrow a_{j_0}a_{j_0}^\top/\gamma,\qquad d_H(\vartheta)\longrightarrow1,
\qquad d_A=m/s\ \text{ for all }\vartheta,
\]
so the protocol sweeps $d_H$ across its whole range while holding sparsity, access costs and the
radius fixed. Do not use $\vartheta=1$, which violates strict positivity. A computable full-block
constant is $\bar H_{\rm f}(\vartheta)=s\max_ib_i(\vartheta)/\gamma$; away from balance it need not equal
$\|H^\star(\vartheta)\|_2$.

Record $d_H$, $d_A$, $\Delta_{\rm cross}$, the exact and computed curvature sums,
$\bar H_{\rm f}/\|H^\star\|_2$, and the weighted work factors. For actual optimization runs also
record the starting gap, radius input, common absolute target and residual, seeds, phase
comparisons and all data-access work. Spectral diagnostics are computed offline, their cost
reported separately, and must not be silently handed to an otherwise bound-only algorithm. Note
that the protocol changes $b$, $c$ and in general the initial gap, so those changes should be
reported rather than claiming the spectrum is the only thing that moved. A separate ablation should
vary the association between row cost $d_i$ and baseline curvature at fixed aggregate sparsity,
reporting $S_0=\sum_id_i\sqrt{\kappa_i}$ alongside $(\max_id_i)\sum_i\sqrt{\kappa_i}$.

\section{Resource comparison and a sharper \texorpdfstring{$L_1$}{L1} target}\label{app:resource-comparison}
\subsection{Comparing native guarantees}
In Table~\ref{tab:oracle-resources}, $R$ is a distance to a solution, $\widetilde R$ a sublevel
radius in the cited coordinate analysis, and $r$ a supplied upper bound used in our schedule. Our row describes expectation after fixed-anchor repetition,
not a deterministic guarantee.

\paragraph{The cost of a frozen constant.}
The comparison the introduction states in one line is this. The method already takes an input
$\bar\Delta\ge F(z)$, and the interpolation argument of Appendix~\ref{app:acc-proof} confines the
accelerated iterate to $\{F\le10\bar\Delta/\delta\}$, so
$\kappa_B+10\nu_B\bar\Delta/\delta$ is a legitimate coordinate Lipschitz constant, fixed before the
first iteration and requiring no schedule of ours. Feeding it to \citet{allenzhu2016} gives
$N=\widetilde O\bigl(r(C_0+\sqrt{\bar\Delta}C_1)/\sqrt\eps\bigr)$, against
$N=\widetilde O\bigl(1+rC_0/\sqrt{\delta\eps}+rC_1/\sqrt\delta\bigr)$ in
Theorem~\ref{thm:phase-short}. The accuracy channel $rC_0/\sqrt\eps$ is identical; the growth
channel is charged $\sqrt{\bar\Delta/\eps}$ times more by the frozen constant. Since $\bar\Delta$ is
an input that a user can only over-estimate, and the runs of Section~\ref{sec:exp-short} show the
envelope exceeding the curvature actually met by up to $2.9\cdot10^4$, that factor is paid on the
instances we run and not only in the bound.

\paragraph{Restarts, and what the continuous schedule does not buy.}
A single frozen constant is not the sharpest alternative; restarts are. Run the method in phases $p=0,1,\dots$, each targeting half the gap of the last, and
inside phase $p$ freeze the constants at that phase's certified level
$\kappa_B+10\nu_B\Delta_p/\delta$ with $\Delta_p=2^{-p}\bar\Delta$, which is the level the same
interpolation argument supplies at the phase's own gap bound. Lemma~\ref{lem:presampling} applies
verbatim with $M$ frozen: it only needs $M^2\ge\mathcal T(10\Delta_p/\delta)$ on the phase's sublevel
set, which the phase target guarantees. Summing the per-phase counts of
Theorem~\ref{thm:phase-short} over the geometric schedule gives

\begin{proposition}[Restarted ACD]\label{prop:restart}
Let $D$ bound the diameter of $\{F\le\bar\Delta\}$ in the gauge slice. Restarted ACD with per-phase
frozen constants reaches $\E F\le\eps$ in
$\widetilde O\bigl(DC_0/\sqrt{\eps}+DC_1\bigr)$ block queries.
\end{proposition}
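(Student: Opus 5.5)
The plan is to run a geometric sequence of phases and sum the phase costs. Phase $p$ starts from the output $z_p$ of phase $p-1$ and targets the gap $\Delta_{p+1}=\Delta_p/2$, where $\Delta_p=2^{-p}\bar\Delta$. It stops after $P=\lceil\log_2(\bar\Delta/\eps)\rceil$ phases.

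Inside phase $p$, I use the constant $M_p:=\sum_B\sqrt{\kappa_B+10\nu_B\Delta_p/\delta}$ for both the sampling law and the weights. This makes the phase a standard accelerated block method (Allen-Zhu type) with fixed block constants $\bar L_B^{(p)}=\kappa_B+10\nu_B\Delta_p/\delta$. The constants are valid because the interpolation argument of Lemma~\ref{lem:presampling} applies with $T_0=r^2/(2\Delta_p)$ and $\Pi=r^2/\delta$: on the event $\{\Phi_k\le\Pi\}$, every extrapolated point satisfies $F(y_k)\le 4\Pi/T_k\le 8\Delta_p/\delta$. The constant $10$ absorbs the margin, and only $M_p^2\ge\mathcal T(10\Delta_p/\delta)$ is used.

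The cancellation identity \eqref{eq:cancellation-app} and the stopped-supermartingale argument of Theorem~\ref{thm:phase-short} then carry over unchanged. With $C_1$ replaced by zero and $C_0$ replaced by $M_p$, the weight-growth bound \eqref{eq:weight-growth-app} gives a phase length of $\widetilde O(1+r_pM_p/\sqrt{\delta\Delta_{p+1}})$. For the radius, the phase's starting point has $F(z_p)\le\Delta_p\le\bar\Delta$, so it lies in the sublevel set $\{F\le\bar\Delta\}$ intersected with the gauge slice. A minimizer lies in the same set, so $r_p=D$ is a valid input for every phase; this is where $D$ replaces $r$.

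For the sum, use $\sqrt{x+y}\le\sqrt x+\sqrt y$ to get $M_p\le C_0+\sqrt{10\Delta_p/\delta}\,C_1$. Phase $p$ then costs
\[
\widetilde O\bigl(1+DC_0/\sqrt{\delta\Delta_{p+1}}+DC_1\sqrt{\Delta_p/\Delta_{p+1}}/\delta\bigr)=\widetilde O\bigl(1+DC_0 2^{p/2}/\sqrt{\delta\bar\Delta}+DC_1/\delta\bigr).
\]
The first term is a geometric series dominated by its last phase, which gives $DC_0/\sqrt{\delta\eps}$. The second term is constant per phase and contributes $DC_1\cdot P=\widetilde O(DC_1)$; this is why the continuous schedule buys nothing over restarts.

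To pass from high probability to expectation at fixed $\delta=1/3$, I reuse the fixed-anchor repetition of Theorem~\ref{thm:work-short} inside each phase. That is, run $O(\log(P\bar\Delta/\eps))$ independent copies from $z_p$ and keep the candidate of smallest objective, together with $z_p$ itself. The best candidate then has $F\le\Delta_{p+1}$ except with probability $3^{-J}$, and $F\le\Delta_p$ always.

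The main obstacle is that a failure in phase $p$ leaves the next anchor above the level $\Delta_{p+1}$ that the next phase assumes. The subsequent phases' inputs $\bar\Delta=\Delta_{p+1}$ are then invalid, not merely suboptimal. I would handle this by taking a union bound over the $P$ phases, choosing $J$ so that the total failure probability $P3^{-J}\le\eps/(2\bar\Delta)$. Monotonicity of the safeguarded output gives $F\le\bar\Delta$ on the failure event. Together this gives $\E F\le\eps/2+\bar\Delta\cdot\eps/(2\bar\Delta)\le\eps$, and all extra factors are logarithmic and absorbed in $\widetilde O$.
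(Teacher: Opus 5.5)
Your proposal is correct and follows the paper's argument. The paper likewise runs halving phases with the constants frozen at $\kappa_B+10\nu_B\Delta_p/\delta$, applies Lemma~\ref{lem:presampling} with frozen $M$ (only $M^2\ge\mathcal T(10\Delta_p/\delta)$ is needed), sums the per-phase counts of Theorem~\ref{thm:phase-short} over the geometric schedule, and puts $D$ in place of $r$ because every anchor stays in the initial sublevel set. Your per-phase repetition with a union bound, used to reach $\E F\le\eps$, spells out a step the paper leaves implicit; take $P=\lceil\log_2(2\bar\Delta/\eps)\rceil$ so that the last phase target is $\eps/2$, as your final inequality requires.
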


\noindent This is \eqref{eq:query-short} with $r$ replaced by $D$, and both channels match. The
comparison therefore turns entirely on whether a radius smaller than the sublevel diameter is
available, and here we must be plain: the only radius we can compute,
Proposition~\ref{prop:input-cert}, bounds exactly that diameter. On the inputs a user can actually
supply, the continuous schedule does not improve on restarts; what it removes is the phase
bookkeeping -- re-validating the anchor, recomputing the sampling tables and discarding the
momentum at every boundary -- not a factor in the rate. We therefore present it as restart-free
rather than as faster, and leave open whether an instance family with a computable $r\ll D$ exists.
The geometric schedule is out of reach at the budgets of Section~\ref{sec:exp-short}: with the proved
constants, certifying the first phase target needs $T_k=4\Pi/\bar\Delta$, and $3000$ epochs cover a
quarter of that distance at $m=300$ and a sixth at $m=1000$, so the runs tagged \texttt{ACD:restart}
complete a single phase and carry no comparison between the two schedules.

\begin{table}[htbp]
\centering\small
\caption{Native gap-accuracy bounds and the extra resources each method consumes. Logarithms and
absolute constants are suppressed. A segment query is not a unit-cost partial derivative.}
\label{tab:oracle-resources}
\begin{tabular}{@{}>{\raggedright\arraybackslash}p{0.18\linewidth}p{0.38\linewidth}>{\raggedright\arraybackslash}p{0.37\linewidth}@{}}
\toprule
Method & Native update count & Information and auxiliary work \\
\midrule
Ordinary ACD \citep{allenzhu2016} &
$\widetilde O(R\sum_i\sqrt{L_i}/\sqrt\eps)$ &
Fixed coordinate smoothness; sparse accelerated state when the problem supports it. \\[1ex]
\citet[Theorem~8]{lobanov2026} &
$\widetilde O\bigl(\widetilde R(\sum_i\sqrt{H_{0,i}}/\sqrt\eps+\sum_i\sqrt{H_{1,i}})\bigr)$ &
Coordinate gap model; known $f^\star$; segment relaxation. Phase-wise sampling and verified
doubling available. \\[1ex]
AGMsDR \citep{nesterov2021relaxation,vankov2025} &
$\widetilde O(R\sqrt{L_0/\eps}+(L_1R)^{2/3})$ &
Global gradient-dependent model; full gradients and function differences; segment minimization
costs extra oracle calls. \\[1ex]
This paper &
$\widetilde O(1+rC_0/\sqrt\eps+rC_1)$ &
BGG constants, supplied $r,\bar\Delta$; one sampled block gradient; no segment solve and no
current-gap evaluation inside a phase. \\
\bottomrule
\end{tabular}
\end{table}

Lobanov's extensions retain a known $f^\star$ while removing the sublevel radius from the inputs by
verified doubling, and inexact relaxation and phase-wise sampling belong in any fair comparison
\citep{lobanov2026}. Neither fixing the parameters beforehand nor non-uniform square-root weights
distinguishes our method; the claim concerns the missing auxiliary solve and the sparse
implementation of the fixed-constant schedule, not a better native exponent. The bridge between the
Hessian condition and a local model is Lemma~\ref{lem:hessian-local-radius}.

The full-gradient result of \citet{vankov2025} motivates a sharper coordinate $L_1$ dependence, but
its proof cannot be imported by replacing a full gradient with an unbiased coordinate estimator:
its monotonicity, segment optimality and accumulated-descent arguments would all have to survive
that replacement. Function-value and directional-derivative costs on generic affine sums should be
counted by touched factors rather than hidden behind an update count.

\paragraph{Relation to accelerated alternating minimization.}
\citet{guminov2021combination} give an accelerated alternating-minimization method with a $k^{-2}$
rate, which on the entropic transport dual is accelerated block coordinate ascent over the two
marginal families. It is the natural point of comparison for this paper on the LSE side, and the
comparison is instructive precisely because the two methods buy different things. Their per-block
step is an exact minimization, which is affordable only when the block subproblem has closed form;
ours is a fixed-curvature step, which needs no subproblem solver at all but pays the envelope. Their
iteration touches an entire marginal, so the per-iteration cost is a full pass over the data and no
sparsity claim is made; ours touches $d_B$ nonzeros. And their rate is stated for the normalized
dual, where the Hessian is bounded, so the difficulty our Theorem~\ref{thm:hg-short} addresses does
not arise. A fair head-to-head would need a family carrying both a block structure with solvable
subproblems \emph{and} enough sparsity for the access cost to matter; transport has the first and
not usefully the second, and our synthetic family the reverse. We therefore report the structural
comparison that transport does allow --- Sinkhorn against a generic sparse method,
Section~\ref{sec:exp-short} --- and do not claim a comparison we have not run.

\paragraph{Relation to relative smoothness.}
The SE objective is relatively smooth with respect to a suitable Bregman reference, and the
Bregman literature \citep{bauschke2017descent,lu2018relatively,hanzely2021bregman} supplies methods
that avoid a global Lipschitz constant by a different route. Two things separate the present
analysis. First, accelerated Bregman methods require a triangle-scaling or symmetry condition on
the reference function, and their optimal rates are constrained by the lower bounds of
\citet{dragomir2021optimal}; we instead keep the Euclidean geometry and move the difficulty into a
gap-dependent constant. Second, and this is the binding objection, a Bregman proximal step on the SE dual
does not decompose over sparse blocks in a way that preserves the exact lazy representation of
Section~\ref{sec:lazy}, which is the property the arithmetic bound rests on. A proper comparison of
the two routes on entropy duals is worth doing and is not attempted here.

\subsection{A proved aggregation inequality and an unproved extension}\label{app:l1-target}
\begin{lemma}[Gradient-sensitive square-root aggregation]\label{lem:l1-holder}
For $L_{0,i},L_{1,i}\ge0$, set $\mathcal A_0=\sum_i\sqrt{L_{0,i}}$ and
$\mathcal B_1=\sum_iL_{1,i}^{2/3}$. Every $g\in\R^m$ satisfies
\begin{equation}\label{eq:gradient-holder}
\Bigl(\sum_i\sqrt{L_{0,i}+L_{1,i}|g_i|}\Bigr)^2\le2\mathcal A_0^2+2\mathcal B_1^{3/2}\|g\|_2 .
\end{equation}
\end{lemma}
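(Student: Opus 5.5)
The plan is to separate the two channels with the elementary inequality $\sqrt{x+y}\le\sqrt x+\sqrt y$, apply $(\alpha+\beta)^2\le2\alpha^2+2\beta^2$, and then handle the gradient-dependent sum by a single Hölder step. Concretely, I would start from
\[
\sum_i\sqrt{L_{0,i}+L_{1,i}|g_i|}\le\mathcal A_0+\sum_i\sqrt{L_{1,i}|g_i|},
\]
and square. This gives
\[
\Bigl(\sum_i\sqrt{L_{0,i}+L_{1,i}|g_i|}\Bigr)^2\le2\mathcal A_0^2+2\Bigl(\sum_i L_{1,i}^{1/2}|g_i|^{1/2}\Bigr)^2 ,
\]
so it remains to prove $\bigl(\sum_iL_{1,i}^{1/2}|g_i|^{1/2}\bigr)^2\le\mathcal B_1^{3/2}\|g\|_2$.

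For that remaining inequality I would use Hölder's inequality with conjugate exponents $p=4/3$ and $p'=4$. These are chosen so that the $g$-factor lands in $\ell_2$. We have
\[
\sum_iL_{1,i}^{1/2}|g_i|^{1/2}\le\Bigl(\sum_iL_{1,i}^{(1/2)(4/3)}\Bigr)^{3/4}\Bigl(\sum_i|g_i|^{2}\Bigr)^{1/4}
=\mathcal B_1^{3/4}\|g\|_2^{1/2},
\]
since $(1/2)(4/3)=2/3$ gives exactly $\mathcal B_1$, and $(1/2)\cdot4=2$ gives $\|g\|_2^2$. Squaring yields $\mathcal B_1^{3/2}\|g\|_2$. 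Substituting back gives \eqref{eq:gradient-holder}.

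I do not expect a real obstacle; the only step that needs care is choosing the Hölder exponents. The target form $\mathcal B_1^{3/2}\|g\|_2$ pins them down: the gradient must appear to the power $1/2$ in an $\ell_2$ norm, which forces $p'=4$ and hence the $2/3$ power on $L_{1,i}$. The degenerate cases, where some $L_{1,i}=0$ or $g=0$, are covered automatically because every term is nonnegative and Hölder holds with zeros.
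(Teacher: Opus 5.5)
Your proof is correct and follows essentially the same route as the paper's: termwise $\sqrt{x+y}\le\sqrt x+\sqrt y$, then $(a+b)^2\le2a^2+2b^2$, and H\"older with exponents $4/3$ and $4$ to bound $\sum_i\sqrt{L_{1,i}|g_i|}$ by $\mathcal B_1^{3/4}\|g\|_2^{1/2}$. The paper applies H\"older first and the other two steps afterwards, which changes nothing.
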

\begin{proof}
By H\"older with exponents $4/3$ and $4$,
$\sum_i\sqrt{L_{1,i}|g_i|}\le(\sum_iL_{1,i}^{2/3})^{3/4}(\sum_i|g_i|^2)^{1/4}$. Apply
$\sqrt{x+y}\le\sqrt x+\sqrt y$ termwise, then $(a+b)^2\le2a^2+2b^2$.
\end{proof}

\paragraph{An unproved target.}
Under \eqref{eq:lc-model}, \eqref{eq:gradient-holder} suggests seeking a method with query
complexity $\widetilde O(1+r\mathcal A_0/\sqrt\eps+r^{2/3}\mathcal B_1)$. For equal $L_{1,i}=L_1$
its growth contribution would be $m(L_1r)^{2/3}$ instead of the $mL_1r$ that the established
gap-based reduction gives, an improvement in the large-$L_1r$ regime only. A plausible sufficient
ingredient would be a one-phase bound
\begin{equation}\label{eq:l1-phase-target}
\E F(u_N)\le K\frac{r^2\mathcal A_0^2}{N^2}+K\frac{r^2\mathcal B_1^3}{N^3}\Delta,
\qquad F(z)\le\Delta,
\end{equation}
with universal $K$, an admissible sampling and acceptance scheme, and an appropriate radius
invariant. Equation~\eqref{eq:l1-phase-target} is a target, not a lemma of this paper, and is not
established for Algorithm~\ref{alg:main-short}; even if proved for one phase it would still need a
separate radius argument before moving-anchor restarts. Computing every $|g_i|$ to set
probabilities would cost a full gradient, so the aggregation lemma by itself provides neither a
sparse sampling implementation nor the required stochastic monotonicity. The proved query bound
remains Theorem~\ref{thm:phase-short} and its specialization through Lemma~\ref{lem:lc-to-hc}.

\subsection{Limitations in detail}\label{app:limits-full}
\paragraph{Looseness of the supplied radius.} Proposition~\ref{prop:input-cert} bounds the distance
to the optimum by $\gamma\sqrt n\,\widehat\sigma^{-1}(1+B_z/(\gamma x_{\min}))$ with
$x_{\min}=\min_j\bar x_j$. On the balanced family of Proposition~\ref{prop:balanced-family} this
grows polynomially in $m$ while the true distance stays $O(\gamma)$, so feeding the bound to
Algorithm~\ref{alg:main-short} would inflate every phase length; the runs therefore supply an oracle
radius, and Table~\ref{tab:cert-sweep} shows the slack growing with $r$.

\paragraph{The dynamic range of the SE iteration.} With
$e^{O(\|A^\top(\lambda-\lambda^\star)\|_\infty/\gamma)}$ as the range of the unnormalized factors,
overflow sets in once $\|A^\top(\lambda-\lambda^\star)\|_\infty/\gamma\gtrsim710$, a displacement of
about $7$ at $\gamma=10^{-2}$ for unit entries and much less for the entries of size $\approx m/s$
in our family -- long before the iterate is inaccurate in any meaningful sense. The LSE dual is immune
because $p(\lambda)$ is shift invariant and can be formed in the log domain; that asymmetry, not
the rate, is what keeps small-$\gamma$ transport off the SE route.

\paragraph{Scope of the separations.} The $\sqrt{s_{\rm col}}$ gain of
Proposition~\ref{prop:balanced-family} and the polynomial gain of
Appendix~\ref{app:heterogeneous-cost} both hold between \emph{our own} upper bounds under two
accountings of the same method. They are not runtime claims, and they are not lower bounds against
any competitor; Appendix~\ref{app:resource-comparison} states what a fair comparison of native
guarantees would need instead.

\end{document}